\documentclass[hidelinks, 10pt]{article}

\usepackage[UKenglish]{babel}
\usepackage[utf8]{inputenc}
\usepackage{amsmath}
\usepackage{amsthm}
\usepackage{amsfonts}
\usepackage{amssymb}
\usepackage{mathtools}
\usepackage{dsfont} 
\usepackage[outline]{contour}

\usepackage[dvipsnames]{xcolor}
\usepackage{hyperref}
\usepackage{lmodern}
\usepackage{fullpage}
\usepackage{microtype}
\usepackage{newpxmath,newpxtext}
\usepackage{booktabs}
\usepackage{enumitem} 
\usepackage{comment}

\usepackage{tikz}
\usetikzlibrary{positioning}
\usetikzlibrary{arrows,backgrounds}
\usetikzlibrary{decorations.pathmorphing}
\usepackage{tikz-cd}
\usepackage{adjustbox}

\tikzset{node distance=2cm, auto}
\tikzcdset{row sep/normal=2.7em,column sep/normal=3.5em}

\allowdisplaybreaks




\setlist[description]{font=\normalfont\bfseries\space}

\renewcommand\labelenumi{(\roman{enumi})}
\renewcommand\theenumi\labelenumi

\hypersetup{
    colorlinks=true,
    linkcolor=gray,
    filecolor=gray,      
    urlcolor=gray,
    citecolor=gray
    }
\urlstyle{same}

\numberwithin{equation}{section}

\newtheorem{definition}{Definition}[section]
\newtheorem{proposition}[definition]{Proposition}
\newtheorem{lemma}[definition]{Lemma}
\newtheorem{theorem}[definition]{Theorem}
\newtheorem{corollary}[definition]{Corollary}

\theoremstyle{definition}
\newtheorem{example}[definition]{Example}

\newtheorem{remark}[definition]{Remark}

\newcommand{\CategoryFont}[1]{\mathsf{#1}}


\renewcommand{\bar}[1]{\mkern 1.5mu\overline{\mkern-1.5mu#1\mkern-1.5mu}\mkern 1.5mu}

\newcommand{\Famly}{\mathscr{F}}
\newcommand{\Dsply}{\mathscr{D}}
\newcommand{\Smer}{\mathscr{S}}

\newcommand{\Etal}{\acute{\mathscr{E}}}
\newcommand{\Open}{\CategoryFont{Open}}

\renewcommand{\,}{,\dots,}

\newcommand{\1}{\mathds{1}}

\renewcommand{\=}{\colon\kern-1ex=}
\renewcommand{\epsilon}{\varepsilon}
\renewcommand{\o}{\circ}
\renewcommand{\b}{\bullet}
\newcommand{\<}{\langle}
\renewcommand{\>}{\rangle}
\newcommand{\op}{\mathsf{op}}

\renewcommand{\^}[1]{^{(#1)}}
\renewcommand{\*}{\ast}
\newcommand{\nto}{\nrightarrow}
\newcommand{\Nat}{\mathsf{Nat}}


\newcommand{\N}{\mathbb{N}}

\newcommand{\R}{\mathbb R}

\newcommand{\id}{\mathsf{id}}


\newcommand{\End}{\CategoryFont{End}}

\newcommand{\X}{\mathbb{X}}

\newcommand{\cRing}{\CategoryFont{cRing}}

\newcommand{\MCat}{\M\text{-}\CategoryFont{Cat}}

\newcommand{\MTngCat}{\M\text{-}\CategoryFont{TngCat}}

\newcommand{\cTngCat}{\CategoryFont{cTngCat}}

\newcommand{\Smooth}{\CategoryFont{Smooth}}

\newcommand{\cAlg}{\CategoryFont{cAlg}}
\newcommand{\Slice}{\CategoryFont{Slice}}
\newcommand{\Split}{\CategoryFont{Split}}
\newcommand{\TngPair}{\CategoryFont{TngPair}}
\newcommand{\Term}{\CategoryFont{Term}}

\newcommand{\M}{\mathscr{M}}
\newcommand{\sRestrCat}{\CategoryFont{sRestrCat}}
\newcommand{\Par}{\CategoryFont{Par}}

\newcommand{\Weil}{\CategoryFont{Weil}_1}

\renewcommand{\d}{\mathsf{d}}
\newcommand{\T}{\mathrm{T}}

\newcommand{\TT}{\mathbb{T}}

\newcommand{\DBnd}{\CategoryFont{DBnd}}

\newcommand{\U}{\mathrm{U}}

\title{Pullbacks in tangent categories and\\
tangent display maps}
\author{Geoffrey Cruttwell and Marcello Lanfranchi}
\date{}
\makeatother
\makeatletter

\begin{document}

\maketitle

\begin{abstract}\noindent
In differential geometry, the existence of pullbacks is a delicate matter, since the category of smooth manifolds does not admit all of them. When pullbacks are required, often submersions are employed as an ideal class of maps which behaves well under this operation and the tangent bundle functor. This issue is reflected in tangent category theory, which aims to axiomatize the tangent bundle functor of differential geometry categorically. Key constructions such as connections, tangent fibrations, or reverse tangent categories require one to work with pullbacks preserved by the tangent bundle functor. In previous work, this issue has been left as a technicality and solved by introducing extra structure to carry around. This paper gives an alternative to this by focusing on a special class of maps in a tangent category called tangent display maps; such maps are well-behaved with respect to pullbacks and applications of the tangent functor. We develop some of the general theory of such maps, show how using them can simplify previous work in tangent categories, and show that in the tangent category of smooth manifolds, they are the same as the submersions. Finally, we consider a subclass of tangent display maps to define open subobjects in any tangent category, allowing one to build a canonical split restriction tangent category in which the original one naturally embeds.  

\end{abstract}
\noindent

\tableofcontents

\section{Introduction}
\label{section:introduction}
Tangent categories axiomatize one of the central features of differential geometry: the tangent bundle functor on the category of smooth manifolds. However, one of the recurring issues when working with tangent categories is the existence of pullbacks and their preservation by the tangent bundle functor and its iterates. In particular, in the standard setting for differential geometry (the category of smooth manifolds), not all pullbacks exist, and those that do not need be ``correct'', in that they need not be preserved by the tangent bundle functor. This paper gives one way to approach this issue, by considering the \emph{tangent display} maps in a tangent category.

To help understand the problems in greater depth, let us begin by looking at some of the issues with pullbacks in the category of smooth manifolds. For one example, the pullback of the following diagram
\[
\begin{tikzcd}
& \R^2 \\
1 & \R
\arrow["0"', from=2-1, to=2-2]
\arrow["m", from=1-2, to=2-2]
\end{tikzcd}
\]
where $m(x,y) = xy$, and $0$ picks out the point $0 \in \R$, is the union of the two co-ordinate axes in $\R^2$, which cannot be given the structure of a smooth manifold, and hence the pullback does not exist in the category of smooth manifolds.

On the other hand, the pullback 
\[
\begin{tikzcd}
& \R \\
1 & \R
\arrow["0"', from=2-1, to=2-2]
\arrow["s", from=1-2, to=2-2]
\end{tikzcd}
\]
where $s(x) = x^2$, \emph{does} exist in the category of smooth manifolds: it is simply a single point. However, when we apply the tangent bundle functor $\T$ to this pullback, we get the diagram
\[
\begin{tikzcd}
1 & \T(\R) \\
1 & \T(\R)
\arrow["{(0,0)}", from=1-1, to=1-2]
\arrow[from=1-1, to=2-1]
\arrow["{(0,0)}"', from=2-1, to=2-2]
\arrow["\T(s)", from=1-2, to=2-2]
\end{tikzcd}
\]
where $\T(s)(x,v) = (x^2,2xv)$. However, this is not a pullback diagram - the pullback of the right and bottom arrows is $\R$, since for any $v \in \R$, $\T(s)(0,v) = (0,0)$. 

Thus, since tangent categories aim to give a common framework for any setting in which one can discuss differentials and tangents (including, in particular, the category of smooth manifolds), in an arbitrary tangent category, one cannot assume all pullbacks exist, or that they are preserved by the tangent bundle functor.

Nevertheless, if one wants to recreate various constructions of differential geometry in an arbitrary tangent category, one is naturally forced to consider various pullbacks. Here are some examples of this:
\begin{itemize}
\item Already in the definition of a tangent category \cite[Definition 2.1]{cockett:differential-bundles} two kinds of pullbacks appear: (i) for each $n \in \N$ and object $M$, the pullback of $n$ copies of the projection $p_M: \T M \to M$, $\T_nM$, is required to exist; $\T_2M$ in particular is needed as the domain for addition, and (ii)  the ``universality of vertical lift'' axiom\footnote{Note that in the original paper, this was given as an equalizer.} asks for a certain diagram to be a pullback which is preserved by $\T$. 
\item Similar pullbacks to the previous point are needed to define differential bundles \cite[Definition 2.1]{cockett:differential-bundles}.
\item To define a connection on a differential bundle $q: E \to M$, one needs to consider the associated ``horizontal bundle'', which is given as the pullback of $p: \T M \to M$ along $q$ \cite[4.1]{cockett:connections}. As we shall discuss in Section \ref{section:conclusions}, similar issues exist when one wants to define connections on more general maps.
\item To build a tangent category structure on the slice of a tangent category, one needs various pullbacks to exist (and be preserved by $\T$) \cite[Prop. 5.7]{cockett:differential-bundles}.
\item To build a fibration of differential bundles, one needs the pullback of a differential bundle along any map to exist (and be preserved by $\T$) \cite[Prop. 5.7]{cockett:differential-bundles}.
\item To discuss ``reverse tangent'' structure, it is useful to have a fibration of differential bundles, which, as above, requires pullbacks of differential bundles along arbitrary maps to exist \cite[Defn. 24]{cruttwell:reverse-tangent-cats}.
\end{itemize}

Thus, a natural question is to determine how to handle the existence of these pullbacks in an arbitrary tangent category. Here are some ways this issue has been dealt with in previous work, and the drawbacks they bring:
\begin{itemize}
\item The most common way to handle the existence of such pullbacks is to simply ask for them as needed: for example, this was how the issue was handled in \cite{cockett:tangent-cats}. However, this gives the theory an ad-hoc feel, and can make it unclear exactly which pullbacks are needed in general. 

\item As a result, some works have asked for a specified system of pullbacks and/or maps along which all pullbacks exist; such an approach was first introduced in \cite{cockett:differential-bundles} (here we call such a system a \emph{tangent display system}). However, this is an additional structure that then needs to be ``carried around'' as one passes to new tangent categories constructed from existing ones (such as tangent categories of vector fields or tangent categories of connections). This viewpoint also does not seem to match examples very well: it is very rare in differential-geometry-like settings that one needs to specify some system of pullbacks. 

\item In the category of smooth manifolds, the important maps along which all pullbacks exist are the submersions. Thus, any questions about pullbacks in this particular tangent category are usually only considered along such maps. One can give an abstract definition of a submersion in an arbitrary tangent category (see Definition \ref{definition:submersions}); however, there is no reason in a general tangent category why pullbacks along such maps should exist or be preserved by the tangent bundle functor. 

\item Any tangent category embeds in a tangent category in which all pullbacks exist and are preserved by $\T$ (see \cite{garner:embedding-theorem-tangent-cats}). Thus, in theory, any time one needs an appropriate pullback, one could ask for it ``in a larger tangent category in which this exists''. However, the existence of such pullbacks is not just needed to make proofs easier - as noted above, it is needed in many of the fundamental definitions of the subject itself. Given this, it is quite awkward to have definitions themselves rely on the existence of some larger tangent category which may or may not be of interest. 
\end{itemize}
In this paper, the solution we propose to this problem is to focus on well-behaved maps which we call ``tangent display maps''. These are the maps in a tangent category which enjoy all the properties one could hope for in relation to pullbacks:
\begin{itemize}
\item All pullbacks along them exist.
\item All such pullbacks are preserved by all powers of the tangent bundle functor.
\item Applying any power of the tangent bundle functor to such a map produces another such map.
\item The pullback of such a map along any morphism produces another such map.
\end{itemize}
We show that the collection of such maps forms the \emph{maximal} tangent display system. We also show a number of other important results related to such maps:
\begin{itemize}
\item (Theorem \ref{theorem:classification-submersions}) In the tangent category of smooth manifolds, such maps are \emph{precisely} the submersions.
\item (Theorem \ref{theorem:fully-display-tangent-cats}) A natural condition is to ask that the differential bundles in a tangent category are tangent display maps (as is the case in the category of smooth manifolds, where vector bundle projections are submersions). 
We show that under relatively mild conditions, this is true so long as each tangent bundle is itself a tangent display map. 
\item (Theorem \ref{theorem:adjunction-Term-Slice}) We show that a subcategory of the slice of a tangent category (focusing on the tangent display maps) is well-defined and enjoys a strong universal property.
\item (Sections \ref{subsection:reverse-tangent-category} and \ref{subsection:linear-completeness}) Several constructions and/or results in tangent category papers are greatly simplified by focusing one's assumptions on tangent display maps (as opposed to using some of the other solutions mentioned above). 
\item (Section \ref{subsection:restriction}) By focusing on certain tangent display maps, one can generalize the notion of an open subobject from differential geometry; these subobjects then provide a natural choice of split (tangent) restriction category in which the original tangent category naturally embeds.     
\end{itemize}

Thus, given these advantages, we argue that going forward, whenever one needs the existence of ``nice'' pullbacks in a tangent category, one should ask that the appropriate maps be tangent display maps. In other words, following from the principle that the best way to work is at the \emph{right} level of generality, not the maximum level of generality, we believe this is exactly the right level of generality for working with pullbacks in tangent categories, and that our results about these maps in this paper reflect this.

\subsection{Notation}
\label{subsection:notation}
We denote the composite of morphisms $f\colon A\to B$ and $g\colon B\to C$ of a category in the diagrammatic order, i.e., $fg\colon A\to C$. For functors evaluated on objects or morphisms, we adopt the applicative notation, i.e., $FGA$ or $FGf$.  We use $(\X,\TT)$ to denote a tangent category. 

\subsection{Acknowledgements}
\label{subsection:acknowledgments}
For this paper, the corresponding author for this paper is Geoff Cruttwell (gcruttwell@mta.ca).  There is no external data associated with this paper.  Both authors worked equally on the research and writing aspects of the paper.  Neither author has any competing interests.

\section{Display maps and tangent display maps}
The goal of this section is to introduce the protagonist of this paper: tangent display maps. We start by discussing the concept of a tangent display system, which was one of the solutions suggested in previous work to the problem of having a class of maps which behaves well under pullbacks and the tangent bundle functor. We then introduce tangent display maps and show that they form the maximal tangent display system.
\par Following this, we then discusses sufficient conditions for which tangent display maps are closed under retract. This plays a role in the applications we explore later in the paper.  One of these sufficient conditions is linked to Cauchy completion. We briefly discuss this relationship and show that the Cauchy completion of a tangent category is still a tangent category which preserves the tangent display maps.  Finally, at the end of this section we prove the promised equivalence between tangent display maps and submersions in the category of smooth manifolds.  
\par We assume the reader is familiar with the theory of tangent categories, as presented in~\cite{cockett:tangent-cats}.

\subsection{Tangent display systems}
\label{subsection:tangent-display-systems}
We begin by setting up definitions involving pullbacks and their preservation by powers of an endofunctor.

\begin{definition}
\label{definition:display-maps}
In any category $\X$:
\begin{itemize}
\item A morphism $q\colon E\to M$ of $\X$ is a \textbf{display map} if for any morphism $f\colon N\to M$ the pullback of $q$ along $f$ exists. The class of display maps of a category is denoted by $\Dsply(\X)$.
\item A family of morphisms $\Famly$ of $\X$ is \textbf{closed under pullbacks} if, whenever the pullback of a morphism $q\colon E\to M$ of $\Famly$ along a generic morphism $f\colon N\to M$ of $\X$ exists, then the pulled-back morphism $N\times_ME\to N$ is also a morphism of $\Famly$.
\item A family $\Dsply$ of morphisms of $\X$ is a \textbf{display system} if each morphism $q$ of $\Dsply$ is a display map and $\Dsply$ is closed under pullbacks.
\end{itemize}
\end{definition}

\begin{definition}
\label{definition:T-limits}
Let $(\X,\T)$ denote a category $\X$ equipped with an endofunctor $\T\colon\X\to\X$.
\begin{itemize}
\item A \textbf{$\T$-limit} is a limit diagram preserved by all iterates $\T^n$ of the endofunctor $\T$, for every positive integer $n$. When $(\X,\TT)$ is a tangent category and $\T$ represents the tangent bundle functor, a $\T$-limit is also called a \textbf{tangent limit}. When the $\T$-limit is of some shape $S$, we adopt the convention to call it a $\T$-``name of the limit shape'', e.g., a $\T$-pullback is a pullback diagram which is a $\T$-limit.
\item A morphism $q\colon E\to M$ of $(\X,\T)$ \textbf{admits all $\T$-pullbacks} if $q$ is a display map and for every morphism $f\colon N\to M$, the pullback of $q$ along $f$ is a $\T$-pullback.
\item A family of morphisms $\Famly$ of $(\X,\T)$ is \textbf{closed under $\T$-pullbacks} if, whenever a $\T$-pullback of a morphism $q\colon E\to M$ of $\Famly$ along a morphism $f\colon N\to M$ exists, the pulled-back morphism $N\times_ME\to N$ is also a morphism of $\Famly$.
\item A family of morphisms $\Famly$ is \textbf{stable under $\T$-pullbacks} if it is closed under $\T$-pullbacks and each morphism of $\Famly$ admits all $\T$-pullbacks. 
\item A \textbf{$\T$-display system} is a family $\Dsply$ of morphisms of $\X$ which is stable under $\T$-pullbacks and stable under $\T$. Concretely, this last condition means that whenever $q$ is in $\Dsply$ so is $\T q$. When $(\X,\TT)$ is a tangent category and $\T$ represents the tangent bundle functor, a $\T$-display system is also called a \textbf{tangent display system}.
\end{itemize}
\end{definition}

\subsection{Tangent display maps}
\label{section:tangent-display-maps}
It is a standard result in category theory that the left square of a diagram of type:
\begin{equation}
\label{equation:outer-right-left-pullback}
\begin{tikzcd}
A & B & C \\
{M'} & {B'} & {C'}
\arrow[from=1-2, to=2-2]
\arrow[from=1-2, to=1-3]
\arrow[from=2-2, to=2-3]
\arrow[from=1-3, to=2-3]
\arrow[from=2-1, to=2-2]
\arrow[from=1-1, to=1-2]
\arrow[from=1-1, to=2-1]
\end{tikzcd}
\end{equation}
is a pullback diagram provided that the right and the outer squares are. The next lemma extends this result to $\T$-pullbacks.

\begin{lemma}
\label{lemma:pullback-lemma}
Let $\X$ be a category equipped with an endofunctor $\T\colon\X\to\X$. Consider also the diagram of Equation~\eqref{equation:outer-right-left-pullback}. If the outer square and the right square are $\T$-pullback diagrams, so is the left square.
\begin{proof}
The proof is a straightforward exercise of category theory we leave to the reader to spell out.
\end{proof}
\end{lemma}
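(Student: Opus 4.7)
The plan is to reduce the claim to the classical pullback pasting lemma applied fiberwise to each iterate $\T^n$. Fix an arbitrary non-negative integer $n$ (taking $n=0$ with $\T^0 = \id$ handles the base case that the left square is a pullback to begin with, while $n \geq 1$ handles the preservation condition in the definition of a $\T$-limit). Since $\T^n$ is a functor, it sends the diagram of Equation~\eqref{equation:outer-right-left-pullback} to a diagram of the same shape in $\X$, and crucially, the $\T^n$-image of the outer square is precisely the horizontal pasting of the $\T^n$-images of the left and right squares.

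By the hypothesis that the right square and the outer square are $\T$-pullbacks, their images under $\T^n$ are pullback squares in $\X$. The classical pasting lemma for pullbacks, applied to these two pullback squares in the $\T^n$-image diagram, then yields that the $\T^n$-image of the left square is itself a pullback. Since this argument applies uniformly for every $n \geq 0$, we conclude simultaneously that the left square is a pullback and that each iterate $\T^n$ preserves it, which is exactly the statement that the left square is a $\T$-pullback.

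There is essentially no obstacle here beyond observing that functoriality of $\T^n$ commutes with horizontal pasting of squares; once that is noted, the argument is a direct invocation of the classical pasting lemma, which is why the authors are content to leave the spelling out to the reader.
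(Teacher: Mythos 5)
Your argument is correct and is precisely the routine argument the paper has in mind when it leaves the proof to the reader: apply each iterate $\T^n$ (with $n=0$ giving the underlying pullback and $n\geq 1$ the preservation condition), note that functoriality makes the image of the outer square the pasting of the images of the inner squares, and invoke the classical pasting lemma. No gaps.
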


The following seems to be folklore, but we could not find a proof of it, so we also record it here, along with a proof, as it will also be useful in what follows. 

\begin{proposition}
\label{proposition:display-maps-form-maximal-display-system}
For a category $\X$, the family $\Dsply(\X)$ of display maps of $\X$ forms the (unique) maximal display system of $\X$ with respect to inclusion. Moreover, $\Dsply(\X)$ is closed under composition.
\begin{proof}
Consider first a display map $q\colon E\to M$ and let $f\colon N\to M$ be any morphism of $\X$. Since $q$ admits all pullbacks the pullback of $q$ along $f$ exists. As a shorthand, let $q'\colon E'\to N$ denote the pullback of $q$ along $f$. We want to show that $q'$ also admits all pullbacks. So, let $g\colon P\to N$ be another morphism of $\X$ and consider the following diagram:
\begin{equation*}
\begin{tikzcd}
{E''} & {E'} & E \\
P & N & M
\arrow["{q'}"{description}, from=1-2, to=2-2]
\arrow[from=1-2, to=1-3]
\arrow["f"', from=2-2, to=2-3]
\arrow["q", from=1-3, to=2-3]
\arrow["g"', from=2-1, to=2-2]
\arrow["{q''}"', from=1-1, to=2-1]
\arrow["gf"', bend right, from=2-1, to=2-3]
\arrow[bend left, from=1-1, to=1-3]
\end{tikzcd}
\end{equation*}
where $q''\colon E''\to P$ denotes the pullback of $q$ along the composition $gf$, which exists since $q$ admits all pullbacks. However, this implies the existence of a unique morphism, indicated by a dash:
\begin{equation*}
\begin{tikzcd}
{E''} & {E'} & E \\
P & N & M
\arrow["{q'}"{description}, from=1-2, to=2-2]
\arrow[from=1-2, to=1-3]
\arrow["f"', from=2-2, to=2-3]
\arrow["q", from=1-3, to=2-3]
\arrow["g"', from=2-1, to=2-2]
\arrow["{q''}"', from=1-1, to=2-1]
\arrow[dashed, from=1-1, to=1-2]
\arrow["gf"', bend right, from=2-1, to=2-3]
\arrow[bend left, from=1-1, to=1-3]
\end{tikzcd}
\end{equation*}
It is clear that the outer and the right squares are pullback diagrams, so thanks to Lemma~\ref{lemma:pullback-lemma}, so is the left square. Therefore, $q'$ is also a display map. In particular, $q'$ belongs to $\Dsply(\X)$, i.e., $\Dsply(\X)$ is closed under pullbacks, thus $\Dsply(\X)$ is a display system. The next step is to show that $\Dsply(\X)$ is the maximal display system. However, this is immediate since for a morphism to belong to a display system means being a display map.
\par Finally, to see that $\Dsply(\X)$ is closed under composition, consider two display maps $q\colon E\to M$ and $p\colon P\to E$ and let $f\colon N\to M$ be any morphism of $\X$. Since $q$ is a display map, $q$ admits a pullback along $f$; let $\pi_2\colon N\times_ME\to E$ be the projection to $E$. Since $p$ is a display map, the pullback of $p$ along $\pi_2$ is well-defined. It is not hard to see that the resulting pullback diagram is the pullback of the composition $pq$ of $p$ and $q$ along $f$.
\end{proof}
\end{proposition}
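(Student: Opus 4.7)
The plan is to verify three separate claims: that $\Dsply(\X)$ is a display system, that it is maximal (and hence the unique maximal one) with respect to inclusion, and that it is closed under composition. Maximality is essentially a tautology once the first claim is established: by definition, every morphism in any display system of $\X$ is a display map, so any display system of $\X$ is automatically a subfamily of $\Dsply(\X)$.

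The substantive part is showing $\Dsply(\X)$ is closed under pullbacks, since the other requirement for a display system (that every member is a display map) is built into the definition of $\Dsply(\X)$. Fix $q\colon E\to M$ in $\Dsply(\X)$ and any $f\colon N\to M$, and let $q'\colon E'\to N$ denote the pullback of $q$ along $f$, which exists because $q$ is a display map. I want to prove that $q'$ is itself a display map, i.e., admits a pullback along an arbitrary $g\colon P\to N$. The idea is to obtain the desired pullback by factoring through the already-available pullback of $q$ along the composite $gf\colon P\to M$: since $q$ is a display map, this composite pullback exists, producing some $q''\colon E''\to P$ together with a map $E''\to E$. The universal property of $q'$ then produces a unique mediating arrow $E''\to E'$ making a rectangle commute. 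Applying the usual pullback pasting lemma to this rectangle, whose outer square and right-hand square are pullbacks, shows that the left-hand square is a pullback as well, exhibiting $q''$ as the pullback of $q'$ along $g$.

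For closure under composition, let $q\colon E\to M$ and $p\colon P\to E$ both be display maps and let $f\colon N\to M$ be arbitrary. Pulling $q$ back along $f$ produces a projection $\pi_2\colon N\times_M E\to E$, and then pulling $p$ back along $\pi_2$ produces a further pullback square; stacking these two squares vertically and invoking the reverse direction of the pasting lemma (two pullbacks stacked form a pullback of the composite) gives a pullback of $pq$ along $f$. So $pq$ is a display map.

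The only real technical tool involved is the pullback pasting lemma, used once in each direction, so I do not foresee a serious obstacle; the argument goes through in any category and is purely formal, with no tangent-categorical input required at this stage. The mildly delicate point is just being careful about which square the pasting lemma is applied to, and in particular remembering that the proof of closure under pullbacks relies on having already pulled $q$ back along the \emph{composite} $gf$ rather than building a pullback of $q'$ directly.
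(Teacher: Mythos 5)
Your proposal is correct and follows essentially the same route as the paper's own proof: closure under pullbacks via the pullback of $q$ along the composite $gf$ together with the pasting lemma, maximality as an immediate consequence of the definitions, and closure under composition by stacking two pullback squares. No gaps.
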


\begin{definition}
\label{definition:T-display-maps}
If $\T$ is an endofunctor on a category $\X$, a \textbf{$\T$-display map} $q\colon E\to M$ is a morphism such that for every $n\geq0$, $\T^nq$ (when $n=0$, $\T^0q=q$ and for $n=1$, $\T^1q=\T q$) admits all $\T$-pullbacks. When $(\X,\TT)$ is a tangent category and $\T$ represents the tangent bundle functor, a $\T$-display map is also called a \textbf{tangent display map}. In the following, let $\Dsply(\X,\T)$ denote the family of $\T$-display maps of a category $\X$ equipped with an endofunctor $\T$. When $(\X,\TT)$ is a tangent category and $\T$ represents the tangent bundle functor, we adopt the notation $\Dsply(\X,\TT)$ for $\Dsply(\X,\T)$.
\end{definition}

The following gives a slightly expanded version of this definition:

\begin{lemma}
\label{proposition:equivalent-definition-tangent-display-maps}
A morphism $q\colon E\to M$ of a tangent category $(\X,\TT)$ is a tangent display map if and only if for any $n\geq0$ and any morphism $f\colon N\to\T^nM$ there are morphisms $\pi_1\colon P\to N$ and $\pi_2\colon P\to E$ such that for any $m\geq0$ the diagram:
\begin{equation*}
\begin{tikzcd}
{\T^mP} & {\T^{n+m}E} \\
{\T^mN} & {\T^{n+m}M}
\arrow["{\T^m\pi_2}", from=1-1, to=1-2]
\arrow["{\T^m\pi_1}"', from=1-1, to=2-1]
\arrow["{\T^{n+m}q}", from=1-2, to=2-2]
\arrow["{\T^mf}"', from=2-1, to=2-2]
\end{tikzcd}
\end{equation*}
is a pullback. 
\end{lemma}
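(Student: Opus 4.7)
The plan is to recognize this lemma as essentially an unpacking of Definition~\ref{definition:T-display-maps}. By that definition, $q$ is a tangent display map exactly when, for every $n \geq 0$, the morphism $\T^n q$ admits all $\T$-pullbacks; and by Definition~\ref{definition:T-limits}, this means that for every $f\colon N \to \T^n M$ the pullback of $\T^n q$ along $f$ exists and is preserved by every iterate $\T^m$. Applying $\T^m$ to the universal pullback square yields precisely the displayed square in the lemma, so the proof is largely a matter of bookkeeping.

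For the forward direction, I would fix $n \geq 0$ and a morphism $f\colon N \to \T^n M$. Since $\T^n q$ admits all $\T$-pullbacks, the pullback of $\T^n q\colon \T^n E \to \T^n M$ along $f$ exists; let $P$ be its vertex, with projections $\pi_1\colon P \to N$ and $\pi_2\colon P \to \T^n E$. That this pullback is a $\T$-pullback then produces, for each $m \geq 0$, the displayed square as a pullback of $\T^{n+m}q$ along $\T^m f$.

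For the converse, I would assume the stated condition and verify that, for every $n \geq 0$, $\T^n q$ admits all $\T$-pullbacks. Given any $f\colon N \to \T^n M$, the hypothesis with $m = 0$ supplies $P$, $\pi_1$, $\pi_2$ forming a pullback of $\T^n q$ along $f$, while the hypothesis for $m \geq 1$ says this pullback is preserved by every $\T^m$, i.e.\ is a $\T$-pullback. Hence $q$ is a tangent display map.

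The only real subtlety is matching up the quantifiers on $n$, $m$, and $f$ and observing that the $\pi_2$ in the statement must have codomain $\T^n E$ for the displayed diagram to type-check (it reduces to $E$ when $n = 0$); no categorical input beyond the definitions is required, and in particular Lemma~\ref{lemma:pullback-lemma} plays no role here.
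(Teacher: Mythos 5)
Your proof is correct and takes exactly the approach the paper intends: the lemma is introduced as ``a slightly expanded version'' of Definition~\ref{definition:T-display-maps}, and the paper omits the proof precisely because it is the routine unpacking of the quantifiers over $n$, $m$, and $f$ that you carry out (your observation that $\pi_2$ must have codomain $\T^nE$ for the square to type-check is also right). Nothing further is needed.
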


\begin{example}
\label{example:trivial-tangent-display-map}
Each category $\X$ comes equipped with a trivial tangent structure $\1$, whose tangent bundle functor is the identity functor and whose structural natural transformations are the identities. In such a trivial tangent category, tangent display maps coincide with display maps.
\end{example}

\begin{example}
\label{ex:display_in_AG}
In the tangent categories of affine schemes and schemes (either the general categories or over a fixed base; see~\cite[Section 4.1]{cruttwell:algebraic-geometry}), the tangent display maps are all maps, since in all cases these categories are complete and the tangent bundle functor is a right adjoint.
\end{example}

\begin{example}
In the tangent category associated to a model of SDG (that is, the full subcategory of microlinear objects; see~\cite[Prop. 5.10]{cockett:tangent-cats}), the category is cartesian closed and the tangent bundle functor is a right adjoint, so again tangent display maps are all maps. 
\end{example}

\begin{example}
We will see later (Theorem \ref{theorem:classification-submersions}) that in the tangent category of smooth manifolds, tangent display maps are the same as submersions.   
\end{example}

Our first result about these maps is:

\begin{theorem}
\label{theorem:T-display-maps-form-maximal-T-display-system}
$\Dsply(\X,\T)$ forms the (unique) maximal tangent display system of $(\X,\T)$ with respect to inclusion. Moreover, $\Dsply(\X,\T)$ is closed under composition.
\begin{proof}
It is straightforward to see that if $q$ is a $\T$-display map, so is $\T q$. Let us prove that $\Dsply(\X,\T)$ is stable under $\T$-pullbacks. Consider a $\T$-display map $q\colon E\to M$ and let $f\colon N\to M$ be a morphism of $\X$. Since $q$ is $\T$-display, the $\T$-pullback of $q$ along $f$ exists. Let $q'\colon E'\to N$ denote the pullback of $q$ along $f$. The goal is to show that $q'$ is still $\T$-display. Consider another morphism $g\colon P\to N$. Since $q$ is $\T$-display, $q$ also admits the $\T$-pullback along the composite $gf\colon P\to M$. Thus, we obtain the diagram:
\begin{equation*}
\begin{tikzcd}
{E''} & {E'} & E \\
P & N & M
\arrow[dashed, from=1-1, to=1-2]
\arrow[bend left, from=1-1, to=1-3]
\arrow["{q''}"', from=1-1, to=2-1]
\arrow[from=1-2, to=1-3]
\arrow["{q'}", from=1-2, to=2-2]
\arrow["\lrcorner"{anchor=center, pos=0.125}, draw=none, from=1-2, to=2-3]
\arrow["q", from=1-3, to=2-3]
\arrow["g"', from=2-1, to=2-2]
\arrow["f"', from=2-2, to=2-3]
\end{tikzcd}
\end{equation*}
where the dashed arrow is induced by the universality of the right square. Since the outer and the right squares are $\T$-pullbacks, by Lemma~\ref{lemma:pullback-lemma}, so is the left square.
\par The next step is to show that each $\T^nq$ also admits all $\T$-pullbacks, for each $n\geq0$. However, since the pullback of $q$ along $f$ is a $\T$-pullback, $\T^nq'$ is the pullback of $\T^nq$ along $\T^nf$. However, $\T q$ is also a $\T$-display map, and by induction, so is $\T^nq$. This proves that $\Dsply(\X,\T)$ is a $\T$-display system.
\par Finally, if $\Famly$ is a $\T$-display system and $q$ is a morphism of $\Famly$, then for every $m\geq0$, $\T^mq$ admits all $\T$-pullbacks, which is precisely the definition of a $\T$-display map. So, each $\T$-display system is a subfamily of $\Dsply(\X,\T)$. This implies that $\Dsply(\X,\T)$ is the maximal $\T$-display system with respect to inclusion. Finally, if $q\colon E\to M$ and $p\colon P\to E$ are two $\T$-display maps, by Proposition~\ref{proposition:display-maps-form-maximal-display-system}, the composite $pq$ of $p$ with $q$  is also a display map. Finally, using a similar argument, it is not hard to see that the composite $pq$ is also a $\T$-display map.
\end{proof}
\end{theorem}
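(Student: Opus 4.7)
The plan is to verify, in turn, stability under $\T$, stability under $\T$-pullbacks, maximality, and closure under composition. The workhorse throughout is Lemma~\ref{lemma:pullback-lemma}, which upgrades the pullback-pasting principle to the $\T$-preserved setting.

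Stability under $\T$ is immediate: if $q\in\Dsply(\X,\T)$ then $\T^n(\T q)=\T^{n+1}q$ admits all $\T$-pullbacks for every $n\geq 0$, so $\T q\in\Dsply(\X,\T)$; iterating, $\T^n q\in\Dsply(\X,\T)$ for every $n$. Each morphism of $\Dsply(\X,\T)$ admits all $\T$-pullbacks by the $n=0$ instance of the definition, so the only substantive thing to check for stability under $\T$-pullbacks is closure.

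The main step is that closure. Given $q\colon E\to M$ in $\Dsply(\X,\T)$ and any $f\colon N\to M$, form the $\T$-pullback $q'\colon E'\to N$ of $q$ along $f$. To see that $q'$ itself lies in $\Dsply(\X,\T)$, fix $n\geq 0$ and any $h\colon Q\to\T^n N$. Applying $\T^n$ to the original $\T$-pullback square exhibits $\T^n q'$ as a $\T$-pullback of $\T^n q$ along $\T^n f$ (since the original square was preserved by every power of $\T$). By the preceding paragraph $\T^n q$ is itself a $\T$-display map, so we may form the $\T$-pullback of $\T^n q$ along the composite $h\cdot\T^n f$; pasting this square against the $\T^n$-image of the original pullback and invoking Lemma~\ref{lemma:pullback-lemma} exhibits the induced left-hand square as a $\T$-pullback of $\T^n q'$ along $h$, as required. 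This is exactly the $\T$-upgrade of the argument in Proposition~\ref{proposition:display-maps-form-maximal-display-system}.

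Maximality is essentially definitional: for any $\T$-display system $\Famly$ and any $q\in\Famly$, stability under $\T$ gives $\T^n q\in\Famly$ for all $n$, and stability under $\T$-pullbacks then ensures each $\T^n q$ admits all $\T$-pullbacks, so $q\in\Dsply(\X,\T)$; hence $\Famly\subseteq\Dsply(\X,\T)$. For closure under composition, given $p\colon P\to E$ and $q\colon E\to M$ in $\Dsply(\X,\T)$ and any $f\colon N\to M$, one builds the $\T$-pullback of $pq$ along $f$ as two successive $\T$-pullbacks (first of $q$ along $f$, then of $p$ along the resulting projection), exactly as in Proposition~\ref{proposition:display-maps-form-maximal-display-system}, and the same pattern applied to $\T^n p$ and $\T^n q$ handles higher powers of $\T$. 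The only delicate point throughout is bookkeeping: at each step one must track both that a given square is a pullback and that every $\T^n$-image of it remains one. This is precisely what being a $\T$-pullback delivers by definition, so Lemma~\ref{lemma:pullback-lemma} can be applied freely and no real obstacle arises.
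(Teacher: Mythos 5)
Your proposal is correct and follows essentially the same route as the paper: stability under $\T$ from the definition, closure under $\T$-pullbacks via the pasting argument of Lemma~\ref{lemma:pullback-lemma} applied to the square obtained from the composite, maximality as a direct consequence of the definitions, and composition by iterated pullbacks as in Proposition~\ref{proposition:display-maps-form-maximal-display-system}. The only (harmless) difference is that you treat all powers $\T^n$ uniformly in the closure step, where the paper first handles $n=0$ and then inducts.
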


\begin{definition}
\label{definition:display-tangent-categories}
A \textbf{well-displayed tangent category} is a tangent category whose tangent bundles are tangent display maps.
\end{definition}


\subsection{Retractive display systems}
\label{subsection:retracts}
In this section we consider how tangent display maps interact with retractions. This will be useful not only as a general result of interest, but also in particular in Theorem \ref{theorem:fully-display-tangent-cats}.

\begin{definition}
\label{definition:retractivity}
A family of morphisms $\Famly$ of a category $\X$ is \textbf{retractive} when it is stable under retracts. Concretely, this means that for each $f\colon P\to M$ in $\Famly$ and each section-retraction pair $s\colon E\to P$, $r\colon P\to E$, the composition $sf\colon E\to M$ is also in $\Famly$. A \textbf{retractive display system} of a category $\X$ is a display system which is also retractive. Similarly, a \textbf{retractive tangent display system} of a tangent category $(\X,\TT)$ is a tangent display system which is also retractive.
\end{definition}

\begin{definition}
\label{definition:retractive-tangent-categories}
A tangent category $(\X,\TT)$ is \textbf{retractive} if the tangent display system $\Dsply(\X,\TT)$ of tangent display maps of $(\X,\TT)$ is retractive.
\end{definition}

In the following, we denote a section-retraction pair $s\colon E\to P, r\colon P\to E$, where $sr=\id_E$, by $(s,r)\colon E\leftrightarrows P$. The next lemma was proved in~\cite[Lemma~1.2.8]{macadam:vector-bundles}. Recall that a weak $\T$-pullback is a commutative square diagram which satisfies a similar universal property of a $\T$-pullback which, however, does not require the uniqueness of the induced morphism.

\begin{lemma}
\label{lemma:pullbacks-retraction}
(Weak) $\T$-pullbacks are stable under retracts. Concretely, consider the following commutative diagram:
\begin{equation*}
\begin{tikzcd}
&& {E_1} \\
{P_1} &&& {E_2} \\
& {P_2} & {M_1} && {E_1} \\
{N_1} && {P_1} & {M_2} \\
& {N_2} &&& {M_1} \\
&& {N_1}
\arrow["{r_E}"{description}, from=1-3, to=2-4]
\arrow["{q_1}", from=1-3, to=3-3]
\arrow["{g_1}"{pos=0.4}, from=2-1, to=1-3]
\arrow["{r_P}"{description}, from=2-1, to=3-2]
\arrow["{p_1}"', from=2-1, to=4-1]
\arrow["{s_E}"{description}, from=2-4, to=3-5]
\arrow["{q_2}", from=2-4, to=4-4]
\arrow["{g_2}"{pos=0.4}, from=3-2, to=2-4]
\arrow["{s_P}"{description}, from=3-2, to=4-3]
\arrow["{p_2}"', from=3-2, to=5-2]
\arrow["{r_M}"{description}, from=3-3, to=4-4]
\arrow["{q_1}", from=3-5, to=5-5]
\arrow["{f_1}"{pos=0.4}, from=4-1, to=3-3]
\arrow["{r_N}"{description}, from=4-1, to=5-2]
\arrow["{g_1}"{pos=0.4}, from=4-3, to=3-5]
\arrow["{p_1}"', from=4-3, to=6-3]
\arrow["{s_M}"{description}, from=4-4, to=5-5]
\arrow["{f_2}"{pos=0.4}, from=5-2, to=4-4]
\arrow["{s_N}"{description}, from=5-2, to=6-3]
\arrow["{f_1}", from=6-3, to=5-5]
\end{tikzcd}
\end{equation*}
Suppose that $(s_M,r_M)\colon M_2\leftrightarrows M_1$, $(s_E,r_E)\colon E_2\leftrightarrows E_1$, $(s_N,r_N)\colon N_2\leftrightarrows N_1$, and $(s_P,r_P)\colon P_2\leftrightarrows P_1$ are section-retraction pairs. If the diagram:
\begin{equation}
\label{equation:back-diagram-retraction}
\begin{tikzcd}
{P_1} & {E_1} \\
{N_1} & {M_1}
\arrow["{g_1}"{pos=0.4}, from=1-1, to=1-2]
\arrow["{p_1}"', from=1-1, to=2-1]
\arrow["{q_1}", from=1-2, to=2-2]
\arrow["{f_1}"', from=2-1, to=2-2]
\end{tikzcd}
\end{equation}
is a (weak) $\T$-pullback, so is:
\begin{equation*}
\begin{tikzcd}
{P_2} & {E_2} \\
{N_2} & {M_2}
\arrow["{g_2}"{pos=0.4}, from=1-1, to=1-2]
\arrow["{p_2}"', from=1-1, to=2-1]
\arrow["{q_2}", from=1-2, to=2-2]
\arrow["{f_2}"', from=2-1, to=2-2]
\end{tikzcd}
\end{equation*}
\end{lemma}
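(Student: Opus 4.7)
The plan is to take an arbitrary test cone over the front (second) square at an arbitrary iterate $\T^m$, transport it via the sections to a test cone over the back (first) square, invoke the $\T$-pullback property of the back square, and then push the resulting morphism back to the front square via the retractions. The overall 3D diagram in the statement encodes all the commuting relations I need: the sections intertwine the structural morphisms of square~1 with those of square~2 (in particular $s_P g_1 = g_2 s_E$, $s_E q_1 = q_2 s_M$, $s_N f_1 = f_2 s_M$), and the analogous equations $r_P g_2 = g_1 r_E$, $q_1 r_M = r_E q_2$, $f_1 r_M = r_N f_2$ hold for the retractions. I will reference these from the hypothesis rather than reprove them.

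Concretely, suppose $u\colon Z \to \T^m E_2$ and $v\colon Z \to \T^m N_2$ satisfy $u \. \T^m q_2 = v \. \T^m f_2$. First I form the morphisms $\tilde u \= u \. \T^m s_E \colon Z \to \T^m E_1$ and $\tilde v \= v \. \T^m s_N \colon Z \to \T^m N_1$. Using the intertwining relations above and functoriality of $\T^m$, a one-line computation gives
\[
\tilde u \. \T^m q_1 \;=\; u \. \T^m q_2 \. \T^m s_M \;=\; v \. \T^m f_2 \. \T^m s_M \;=\; \tilde v \. \T^m f_1,
\]
so $(\tilde u, \tilde v)$ is a cone over $\T^m$ applied to the back square~\eqref{equation:back-diagram-retraction}. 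Since that square is a (weak) $\T$-pullback, there exists a (unique in the strong case) morphism $w'\colon Z \to \T^m P_1$ with $w' \. \T^m g_1 = \tilde u$ and $w' \. \T^m p_1 = \tilde v$. I then define $w \= w' \. \T^m r_P \colon Z \to \T^m P_2$.

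To verify that $w$ factors the original cone I use the retraction intertwining and the identities $s_E r_E = \id_{E_2}$ and $s_N r_N = \id_{N_2}$:
\[
w \. \T^m g_2 \;=\; w' \. \T^m(r_P g_2) \;=\; w' \. \T^m g_1 \. \T^m r_E \;=\; u \. \T^m s_E \. \T^m r_E \;=\; u,
\]
and symmetrically $w \. \T^m p_2 = v$. For the strong version, uniqueness of $w$ follows by post-composing any candidate $\bar w \colon Z \to \T^m P_2$ with $\T^m s_P$: the intertwining $s_P g_1 = g_2 s_E$ and $s_P p_1 = p_2 s_N$ shows that $\bar w \. \T^m s_P$ is a valid factorisation through the back pullback, hence is determined by $(u, v)$, and post-composing with $\T^m r_P$ together with $s_P r_P = \id_{P_2}$ then determines $\bar w$ itself.

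The only real subtlety is bookkeeping: making sure the eight intertwining squares between the two pullback squares are all used with the correct direction. Once these are read off from the 3D diagram, the argument is the same for weak and strong $\T$-pullbacks, with the uniqueness clause activated only in the strong case.
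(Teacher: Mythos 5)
Your argument is correct and is exactly the standard proof that the paper leaves as an exercise (citing MacAdam's Lemma~1.2.8): transport the test cone along the sections, factor through the back $\T$-pullback, and return along the retractions, with uniqueness in the strong case recovered via $s_Pr_P=\id_{P_2}$. All the intertwining identities you invoke are indeed the faces of the commutative prism in the hypothesis, and working at an arbitrary iterate $\T^m$ correctly covers the full $\T$-pullback condition.
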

\begin{proof}
This is a fairly straightforward exercise in basic category theory which we leave to the reader, or see \cite[Lemma~1.2.8]{macadam:vector-bundles}.
\end{proof}

\begin{proposition}
\label{proposition:T-pullback-implies-pullback}
Let us consider a tangent category $(\X,\TT)$ and suppose that the following is a (tangent) pullback diagram:
\begin{equation*}
\begin{tikzcd}
{\T P} & {\T E} \\
{\T N} & {\T M}
\arrow["{\T g}", from=1-1, to=1-2]
\arrow["{\T q'}"', from=1-1, to=2-1]
\arrow["{\T q}", from=1-2, to=2-2]
\arrow["{\T f}"', from=2-1, to=2-2]
\end{tikzcd}
\end{equation*}
Then the following is also a (tangent) pullback diagram:
\begin{equation*}
\begin{tikzcd}
P & E \\
N & M
\arrow["g", from=1-1, to=1-2]
\arrow["{q'}"', from=1-1, to=2-1]
\arrow["q", from=1-2, to=2-2]
\arrow["f"', from=2-1, to=2-2]
\end{tikzcd}
\end{equation*}
\begin{proof}
Notice that, for any object $M$, $(z,p)\colon M\leftrightarrows\T M$ constitutes a section-retraction pair. Moreover, thanks to the naturality of $z$ and $p$, the following diagram commutes:
\begin{equation*}
\begin{tikzcd}
&& {\T E} \\
{\T P} &&& E \\
& P & {\T M} && {\T E} \\
{\T N} && {\T P} & M \\
& N &&& {\T M} \\
&& {\T N}
\arrow["p"{description}, from=1-3, to=2-4]
\arrow["{\T q}", from=1-3, to=3-3]
\arrow["{\T g}"{pos=0.4}, from=2-1, to=1-3]
\arrow["p"{description}, from=2-1, to=3-2]
\arrow["{\T q'}"', from=2-1, to=4-1]
\arrow["z"{description}, from=2-4, to=3-5]
\arrow["q", from=2-4, to=4-4]
\arrow["g"{pos=0.4}, from=3-2, to=2-4]
\arrow["z"{description}, from=3-2, to=4-3]
\arrow["{q'}"', from=3-2, to=5-2]
\arrow["p"{description}, from=3-3, to=4-4]
\arrow["{\T q}", from=3-5, to=5-5]
\arrow["{\T f}", from=4-1, to=3-3]
\arrow["p"{description}, from=4-1, to=5-2]
\arrow["{\T g}"{pos=0.4}, from=4-3, to=3-5]
\arrow["{\T q'}"', from=4-3, to=6-3]
\arrow["z"{description}, from=4-4, to=5-5]
\arrow["f"', from=5-2, to=4-4]
\arrow["z"{description}, from=5-2, to=6-3]
\arrow["{\T f}"', from=6-3, to=5-5]
\end{tikzcd}
\end{equation*}
Thus, the central diagram is the retract of a (tangent) pullback and, by Lemma~\ref{lemma:pullbacks-retraction}, is also a (tangent) pullback diagram.
\end{proof}
\end{proposition}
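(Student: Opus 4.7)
The plan is to exhibit the untangented square as a retract of the tangented square and then invoke Lemma~\ref{lemma:pullbacks-retraction}. For each object $X$ of a tangent category, the zero morphism $z_X\colon X\to\T X$ and the projection $p_X\colon\T X\to X$ satisfy $z_X p_X = \id_X$, so $(z_X, p_X)\colon X \leftrightarrows \T X$ is a section-retraction pair. Applied at the four corners $P$, $E$, $N$, $M$, this provides the four section-retraction pairs required by Lemma~\ref{lemma:pullbacks-retraction}.

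Next, I would verify the commutativity data needed to invoke that lemma. Naturality of the natural transformations $z\colon\id\Rightarrow\T$ and $p\colon\T\Rightarrow\id$, applied to each of the morphisms $f$, $g$, $q$, $q'$, yields the four ``connecting'' naturality squares that fit the two parallel squares into the three-dimensional diagram in the statement of Lemma~\ref{lemma:pullbacks-retraction}. This certifies that the untangented square is a retract of the tangented square in the sense of that lemma, with all retraction/section data given by components of $z$ and $p$.

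Finally, since by hypothesis the tangented square is a (tangent) pullback, Lemma~\ref{lemma:pullbacks-retraction} immediately concludes that the untangented square is also a (tangent) pullback. No real obstacle is anticipated beyond carefully matching the orientation of the naturality squares to the orientation appearing in the cube of Lemma~\ref{lemma:pullbacks-retraction}; this is a routine bookkeeping exercise requiring no further tangent category axioms beyond $zp = \id$ and the naturality of $z$ and $p$.
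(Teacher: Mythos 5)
Your proposal is correct and follows exactly the same route as the paper's proof: the four section-retraction pairs $(z,p)\colon X\leftrightarrows\T X$ at the corners, naturality of $z$ and $p$ to supply the connecting squares, and then Lemma~\ref{lemma:pullbacks-retraction} to transfer the (tangent) pullback property along the retraction.
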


\begin{corollary}
\label{corollary:simplication-tangent-display-maps}
In Lemma~\ref{proposition:equivalent-definition-tangent-display-maps}, the indices $n$ and $m$ can be taken both strictly positive, i.e., $n,m\geq1$.
\end{corollary}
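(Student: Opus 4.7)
The plan is to prove the equivalence between the $n, m \geq 0$ characterization of Lemma~\ref{proposition:equivalent-definition-tangent-display-maps} and its restriction to $n, m \geq 1$. One direction is immediate, since the condition with $n, m \geq 0$ trivially implies its restriction to $n, m \geq 1$. For the non-trivial converse, I would handle the two indices in turn.

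First, I would upgrade the $m$-quantifier from $\geq 1$ to $\geq 0$. Fix any $n \geq 1$ and $f\colon N \to \T^n M$ with $\pi_1, \pi_2$ as in the hypothesis. The $m = 1$ instance of the diagram is a tangent pullback, since applying $\T^k$ to it yields the $m = k + 1 \geq 1$ instance, which is a pullback by hypothesis. By Proposition~\ref{proposition:T-pullback-implies-pullback}, the $m = 0$ instance is therefore also a (tangent) pullback. This gives the condition for $n \geq 1$ and $m \geq 0$; equivalently, $\T q$ is a tangent display map.

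Second, I would derive the $n = 0$ case, i.e., show that $q$ itself admits all tangent pullbacks. Given $f\colon N \to M$, applying the tangent display property of $\T q$ to $\T f\colon \T N \to \T M$ yields a tangent pullback $\tilde P$ of $\T q$ along $\T f$. The plan is to descend this to a pullback of $q$ along $f$, using the canonical section-retraction pairs $(z_X, p_X) \colon X \leftrightarrows \T X$ at each corner of the square together with Lemma~\ref{lemma:pullbacks-retraction}. By Theorem~\ref{theorem:T-display-maps-form-maximal-T-display-system}, the projections of $\tilde P$ are themselves tangent display maps, so one may form further pullbacks (along $z_N$ and $z_E$) to isolate the desired object, and Proposition~\ref{proposition:T-pullback-implies-pullback} then promotes the resulting square to a tangent pullback of $q$ along $f$. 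The main obstacle is in this second stage: one must carefully verify that the descent construction yields precisely the pullback of $q$ along $f$ rather than a larger auxiliary object, which requires careful tracking of the universal properties and the retract structure provided by the $(z, p)$-pairs.
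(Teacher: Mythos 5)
Your first stage is correct, and it is evidently the route the paper intends for the $m$-index: for fixed $n\geq 1$ and $f\colon N\to\T^nM$, the $m$-th square is $\T^{m-1}$ applied to the $m=1$ square, so the hypothesis for all $m\geq 1$ says exactly that the $m=1$ square is a tangent pullback, and Proposition~\ref{proposition:T-pullback-implies-pullback} then delivers the $m=0$ square. (The paper itself records no proof of this corollary, so this is the only part one can match against its implicit argument.)

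The second stage has a genuine gap, and it sits exactly where you flag ``the main obstacle''. Lemma~\ref{lemma:pullbacks-retraction} needs a section--retraction pair at \emph{all four} corners; at the corner $\tilde P=\T N\times_{\T M}\T E$ this amounts to splitting the idempotent $e$ determined by $e\rho_1=\rho_1pz$ and $e\rho_2=\rho_2pz$, and a general tangent category provides no such splitting --- this is precisely why Proposition~\ref{proposition:retractive-display-systems} and Theorem~\ref{theorem:-retractive-tangent-display-systems} must assume that split idempotents are closed under pullbacks. Your proposed construction of the missing corner by ``further pullbacks along $z_N$ and $z_E$'' does not supply it: Theorem~\ref{theorem:T-display-maps-form-maximal-T-display-system} makes $\rho_1\colon\tilde P\to\T N$ a tangent display map (it is the pullback of the tangent display map $\T q$), so pulling back along $z_N$ is legitimate and yields $N\times_{\T M}\T E$, the pullback of $\T q$ along $fz$; but the other projection $\rho_2\colon\tilde P\to\T E$ is the pullback of the \emph{arbitrary} morphism $\T f$ along $\T q$, so it is not covered by that theorem, and $z_E\colon E\to\T E$ is not a tangent display map either. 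Hence the pullback along $z_E$ that would cut $N\times_{\T M}\T E$ down to $N\times_ME$ is not known to exist --- and that existence is the entire content of the $n=0$ case. In short, reducing $m\geq 0$ to $m\geq 1$ is Proposition~\ref{proposition:T-pullback-implies-pullback}, but reducing $n\geq 0$ to $n\geq 1$ is the assertion that $q$ is a tangent display map whenever $\T q$ is, i.e.\ that tangent display maps are closed under this particular retract; your sketch does not establish this, and without a retractivity hypothesis it is not clear that it can be established.
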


\begin{definition}
\label{definition:split-idempotents-closed-under-pullbacks}
For a category $\X$, we say that \textbf{split idempotents are closed under pullbacks} if for any pullback diagram:
\begin{equation*}
\begin{tikzcd}
{P_1} & {E_1} \\
{N_1} & {M_1}
\arrow["{g_1}"{pos=0.4}, from=1-1, to=1-2]
\arrow["{p_1}"', from=1-1, to=2-1]
\arrow["\lrcorner"{anchor=center, pos=0.125}, draw=none, from=1-1, to=2-2]
\arrow["{q_1}", from=1-2, to=2-2]
\arrow["{f_1}"', from=2-1, to=2-2]
\end{tikzcd}
\end{equation*}
and for any section-retraction pair $(s_E,r_E)\colon E_2\leftrightarrows E_1$, the induced idempotent $e\colon P_1\to P_1$:
\begin{equation*}
\begin{tikzcd}
&& {E_1} \\
{P_1} &&& {E_2} \\
&& M && {E_1} \\
N && {P_1} & M \\
& N &&& M \\
&& N
\arrow["{r_E}"{description}, from=1-3, to=2-4]
\arrow["{q_1}", from=1-3, to=3-3]
\arrow["{g_1}"{pos=0.4}, from=2-1, to=1-3]
\arrow["\lrcorner"{anchor=center, pos=0.125}, draw=none, from=2-1, to=3-3]
\arrow["{p_1}"', from=2-1, to=4-1]
\arrow["e"{description}, dashed, from=2-1, to=4-3]
\arrow["{s_E}"{description}, from=2-4, to=3-5]
\arrow["{q_2}", from=2-4, to=4-4]
\arrow[Rightarrow, no head, from=3-3, to=4-4]
\arrow["{q_1}", from=3-5, to=5-5]
\arrow["f"{pos=0.4}, from=4-1, to=3-3]
\arrow[Rightarrow, no head, from=4-1, to=5-2]
\arrow["{g_1}"{pos=0.4}, from=4-3, to=3-5]
\arrow["\lrcorner"{anchor=center, pos=0.125}, draw=none, from=4-3, to=5-5]
\arrow["{p_1}"', from=4-3, to=6-3]
\arrow[Rightarrow, no head, from=4-4, to=5-5]
\arrow["f"{pos=0.4}, from=5-2, to=4-4]
\arrow[Rightarrow, no head, from=5-2, to=6-3]
\arrow["f", from=6-3, to=5-5]
\end{tikzcd}
\end{equation*}
splits, meaning there is an object $P_2$ and a section-retraction pair $(s_P,r_P)\colon P_2\leftrightarrows P_1$, such that $r_Ps_P=e$.
\end{definition}

\begin{lemma}
\label{lemma:split-idempotents}
If $\X$ is Cauchy complete, meaning all idempotents of $\X$ split, then split idempotents are closed under pullbacks.
\end{lemma}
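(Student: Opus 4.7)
The strategy is essentially an unpacking of definitions. Cauchy completeness says every idempotent in $\X$ splits, and a splitting of the idempotent $e\colon P_1\to P_1$ is precisely the data of an object $P_2$ together with a section-retraction pair $(s_P,r_P)\colon P_2\leftrightarrows P_1$ satisfying $r_Ps_P=e$. So once we verify that $e$ is a genuine idempotent, the conclusion will be immediate.

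The first step is to confirm that $e$ is well-defined and idempotent. By the universal property of the pullback $P_1\cong E_1\times_MN$, the morphism $e$ is characterized by the two equations $eg_1=g_1r_Es_E$ and $ep_1=p_1$; well-definedness amounts to the cone compatibility $g_1(r_Es_E)q_1=p_1f$. This holds because the outer diagram of Definition~\ref{definition:split-idempotents-closed-under-pullbacks} commutes, yielding in particular $s_Eq_1=q_2$ and $r_Eq_2=q_1$, so $r_Es_Eq_1=q_1$ and the compatibility reduces to the given pullback square identity $g_1q_1=p_1f$. For idempotency, I compute $e^2g_1=e(g_1r_Es_E)=g_1(r_Es_E)(r_Es_E)=g_1r_Es_E=eg_1$, using $s_Er_E=\id_{E_2}$ to collapse the repeated factor, and $e^2p_1=ep_1=p_1$; the uniqueness clause of the pullback universal property then forces $e^2=e$.

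The second step is simply to invoke Cauchy completeness to split $e$, producing the desired object $P_2$ together with morphisms $s_P\colon P_2\to P_1$ and $r_P\colon P_1\to P_2$ satisfying $s_Pr_P=\id_{P_2}$ and $r_Ps_P=e$. This is exactly the conclusion required by Definition~\ref{definition:split-idempotents-closed-under-pullbacks}.

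I do not anticipate any real obstacle: the only non-trivial bookkeeping is verifying the universal-property identities defining $e$ and its square, and the core content of the lemma is the tautology that ``split idempotents are closed under pullbacks'' reduces, under Cauchy completeness, to the defining property of the latter.
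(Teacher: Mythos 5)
Your proof is correct: you verify that $e$ is a well-defined idempotent via the universal property of the pullback (using the commutativity $s_Eq_1=q_2$, $r_Eq_2=q_1$ and $s_Er_E=\id_{E_2}$) and then split it by Cauchy completeness, which is exactly the intended argument. The paper states this lemma without proof, treating it as immediate, and your write-up supplies precisely the routine verification that is being omitted (the idempotency check also appears, essentially verbatim, in the paper's proof of Proposition~\ref{proposition:retractive-display-systems}).
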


\begin{proposition}
\label{proposition:retractive-display-systems}
Suppose that the split idempotents of a category $\X$ are closed under pullbacks. Then the display maps of $\X$ form a retractive display system $\Dsply(\X)$.
\begin{proof}
Consider a display map $q_1\colon E_1\to M$. We want to show that if $(s_E,r_E)\colon E_2\leftrightarrows E_1$ is a section-retraction pair, then $q_2\=s_Eq_1\colon E_2\to M$ is also a display map. Consider a morphism $f\colon N\to E$; let us prove that the pullback of $q_2$ along $f$ is well-defined. First of all, consider the induced morphism $e\colon P\to P$ making the following diagram commutative:
\begin{equation*}
\begin{tikzcd}
&& {E_1} \\
{P_1} &&& {E_2} \\
&& M && {E_1} \\
N && {P_1} & M \\
& N &&& M \\
&& N
\arrow["{r_E}"{description}, from=1-3, to=2-4]
\arrow["{q_1}", from=1-3, to=3-3]
\arrow["{g_1}"{pos=0.4}, from=2-1, to=1-3]
\arrow["\lrcorner"{anchor=center, pos=0.125}, draw=none, from=2-1, to=3-3]
\arrow["{p_1}"', from=2-1, to=4-1]
\arrow["e"{description}, dashed, from=2-1, to=4-3]
\arrow["{s_E}"{description}, from=2-4, to=3-5]
\arrow["{q_2}", from=2-4, to=4-4]
\arrow[Rightarrow, no head, from=3-3, to=4-4]
\arrow["{q_1}", from=3-5, to=5-5]
\arrow["f"{pos=0.4}, from=4-1, to=3-3]
\arrow[Rightarrow, no head, from=4-1, to=5-2]
\arrow["{g_1}"{pos=0.4}, from=4-3, to=3-5]
\arrow["\lrcorner"{anchor=center, pos=0.125}, draw=none, from=4-3, to=5-5]
\arrow["{p_1}"', from=4-3, to=6-3]
\arrow[Rightarrow, no head, from=4-4, to=5-5]
\arrow["f"{pos=0.4}, from=5-2, to=4-4]
\arrow[Rightarrow, no head, from=5-2, to=6-3]
\arrow["f", from=6-3, to=5-5]
\end{tikzcd}
\end{equation*}
It is not hard to see that $e$ is an idempotent:
\begin{align*}
=&\quad eep_1   &&(ep_1=p_1)\\
=&\quad ep_1   &&(ep_1=p_1)\\
=&\quad p_1
\end{align*}
and similarly:
\begin{align*}
=&\quad eeg_1   &&(eg_1=g_1r_Es_E)\\
=&\quad eg_1r_Es_E  &&(eg_1=g_1r_Es_E)\\
=&\quad g_1r_Es_Er_Es_E &&(s_Er_E=\id_{E_2}\\
=&\quad g_1r_Es_E
\end{align*}
Since $ee$ satisfies the same equations of $e$, i.e., $ep_1=p_1$ and $eg_1=g_qr_Es_E$, by the universality of the pullback, $ee=e$. Since split idempotents of $\X$ are closed under pullbacks, $e$ must split, meaning there exists an object $P_2$ and a section-retraction pair $(s_P,r_P)\colon P_2\leftrightarrows P_1$ such that $e=r_Ps_P$:
\begin{equation*}
\begin{tikzcd}
&& {E_1} \\
{P_1} &&& {E_2} \\
& {P_2} & M && {E_1} \\
N && {P_1} & M \\
& N &&& M \\
&& N
\arrow["{r_E}"{description}, from=1-3, to=2-4]
\arrow["{q_1}", from=1-3, to=3-3]
\arrow["{g_1}"{pos=0.4}, from=2-1, to=1-3]
\arrow["{r_P}"{description}, from=2-1, to=3-2]
\arrow["\lrcorner"{anchor=center, pos=0.125}, draw=none, from=2-1, to=3-3]
\arrow["{p_1}"', from=2-1, to=4-1]
\arrow["{s_E}"{description}, from=2-4, to=3-5]
\arrow["{q_2}", from=2-4, to=4-4]
\arrow["{g_2}"{pos=0.4}, from=3-2, to=2-4]
\arrow["{s_P}"{description}, from=3-2, to=4-3]
\arrow["{p_2}"', from=3-2, to=5-2]
\arrow[Rightarrow, no head, from=3-3, to=4-4]
\arrow["{q_1}", from=3-5, to=5-5]
\arrow["f"{pos=0.4}, from=4-1, to=3-3]
\arrow[Rightarrow, no head, from=4-1, to=5-2]
\arrow["{g_1}"{pos=0.4}, from=4-3, to=3-5]
\arrow["\lrcorner"{anchor=center, pos=0.125}, draw=none, from=4-3, to=5-5]
\arrow["{p_1}"', from=4-3, to=6-3]
\arrow[Rightarrow, no head, from=4-4, to=5-5]
\arrow["f"{pos=0.4}, from=5-2, to=4-4]
\arrow[Rightarrow, no head, from=5-2, to=6-3]
\arrow["f", from=6-3, to=5-5]
\end{tikzcd}
\end{equation*}
This implies that the diagram:
\begin{equation*}
\begin{tikzcd}
{P_2} & {E_2} \\
N & M
\arrow["{g_2}"{pos=0.4}, from=1-1, to=1-2]
\arrow["{p_2}"', from=1-1, to=2-1]
\arrow["{q_2}", from=1-2, to=2-2]
\arrow["f"'{pos=0.4}, from=2-1, to=2-2]
\end{tikzcd}
\end{equation*}
is the retraction of a pullback, where $p_2\=s_Pp_1$ and $g_2\=s_Pg_1r_E$. By Lemma~\ref{lemma:pullbacks-retraction} this diagram is also a pullback, proving that the pullback of $q_2$ along $f$ is well-defined. In particular, $q_2$ is a display map.
\end{proof}
\end{proposition}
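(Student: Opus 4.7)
The plan is to construct the pullback of $q_2 = s_E q_1\colon E_2 \to M$ along any morphism $f\colon N \to M$ as a retract of the pullback of $q_1$ along $f$. Since $q_1$ is a display map, the pullback $P_1$ of $q_1$ along $f$ exists, with projections $p_1\colon P_1 \to N$ and $g_1\colon P_1 \to E_1$. The composite $r_E s_E\colon E_1 \to E_1$ is idempotent, since $(r_E s_E)(r_E s_E) = r_E (s_E r_E) s_E = r_E s_E$ using $s_E r_E = \id_{E_2}$. This suggests inducing, via the universal property of $P_1$, an endomorphism $e\colon P_1 \to P_1$ determined by $e p_1 = p_1$ and $e g_1 = g_1 r_E s_E$. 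A short diagram chase then confirms $e$ is idempotent: both $ee$ and $e$ satisfy these same defining composites (for instance $e e g_1 = (e g_1) r_E s_E = g_1 r_E s_E r_E s_E = g_1 r_E s_E$), so by the uniqueness clause of the pullback universal property, $ee = e$.

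Next, I would invoke the hypothesis that split idempotents of $\X$ are closed under pullbacks (Definition~\ref{definition:split-idempotents-closed-under-pullbacks}) to split $e$: there exists an object $P_2$ together with a section-retraction pair $(s_P, r_P)\colon P_2 \leftrightarrows P_1$ satisfying $r_P s_P = e$. Setting $p_2 = s_P p_1$ and $g_2 = s_P g_1 r_E$ produces a commutative square with corners $P_2, E_2, N, M$ over the morphisms $q_2$ and $f$. By construction this new square is a retract of the original $q_1$-pullback square, using the pairs $(s_P, r_P)$ on the $P$-vertices, $(s_E, r_E)$ on the $E$-vertices, and identities on $N$ and $M$. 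By Lemma~\ref{lemma:pullbacks-retraction}, pullbacks are stable under retracts, so this candidate square is itself a pullback, yielding the desired pullback of $q_2$ along $f$. Hence $q_2$ is a display map, completing the proof.

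The main subtlety I anticipate is carefully verifying the commutativities of the large retract-of-cubes diagram relating the $q_1$-pullback and the candidate $q_2$-pullback, particularly checking that the definitions $p_2 = s_P p_1$ and $g_2 = s_P g_1 r_E$ assemble correctly with $(s_P, r_P)$ and $(s_E, r_E)$ so that Lemma~\ref{lemma:pullbacks-retraction} applies with its stated retraction data. Once this diagrammatic bookkeeping is handled, the argument reduces cleanly to the universal property of $P_1$ and the splitting hypothesis on idempotents.
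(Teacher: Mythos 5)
Your proposal is correct and follows essentially the same route as the paper: form the pullback $P_1$ of $q_1$ along $f$, transport the idempotent $r_Es_E$ to an idempotent $e$ on $P_1$ via the universal property, split it using the hypothesis, and exhibit the square on $P_2$ (with $p_2=s_Pp_1$, $g_2=s_Pg_1r_E$) as a retract of the $q_1$-pullback so that Lemma~\ref{lemma:pullbacks-retraction} applies. The commutativity checks you flag as the remaining subtlety do go through, using $eg_1=g_1r_Es_E$ and $s_Pr_P=\id_{P_2}$, exactly as in the paper's diagram.
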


\begin{corollary}
\label{corollary:cauchy-retractive-display}
In a Cauchy complete category, display maps form a retractive display system.
\begin{proof}
By Lemma~\ref{lemma:split-idempotents}, Cauchy completion implies that split idempotents are closed under pullbacks and Proposition~\ref{proposition:retractive-display-systems} implies that display maps are stable under retracts.
\end{proof}
\end{corollary}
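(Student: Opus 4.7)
The statement is essentially a direct composition of two previously established results, so the plan is to chain them together rather than give a new argument. The goal is to show that in a Cauchy complete category $\X$, the class $\Dsply(\X)$ of display maps is a retractive display system. By Proposition~\ref{proposition:display-maps-form-maximal-display-system}, we already know $\Dsply(\X)$ is a display system (in fact the maximal one), so the only content left is retractivity in the sense of Definition~\ref{definition:retractivity}.

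My plan is to reduce retractivity of $\Dsply(\X)$ to the closure property of split idempotents under pullbacks. First, I would invoke Lemma~\ref{lemma:split-idempotents}: since $\X$ is Cauchy complete every idempotent of $\X$ splits, and in particular any idempotent arising on a pullback object splits, so split idempotents are closed under pullbacks in the sense of Definition~\ref{definition:split-idempotents-closed-under-pullbacks}. Second, I would apply Proposition~\ref{proposition:retractive-display-systems} to $\X$: the hypothesis it requires is precisely the closure property we have just verified, and its conclusion is that $\Dsply(\X)$ is retractive. Combining these two observations yields the corollary.

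There is essentially no obstacle here: the whole point of setting up Definition~\ref{definition:split-idempotents-closed-under-pullbacks} and Proposition~\ref{proposition:retractive-display-systems} in the previous subsection was to isolate the categorical input needed to prove retractivity, and Lemma~\ref{lemma:split-idempotents} supplies that input in the Cauchy complete case. The only thing worth double-checking when writing out the details is that the idempotent $e\colon P_1\to P_1$ constructed inside the proof of Proposition~\ref{proposition:retractive-display-systems} really does live in $\X$ (so that Cauchy completeness of $\X$ applies to it), but this is immediate because $e$ is built from morphisms of $\X$ via the universal property of a pullback in $\X$.
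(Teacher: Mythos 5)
Your proposal is correct and follows exactly the paper's own argument: apply Lemma~\ref{lemma:split-idempotents} to deduce from Cauchy completeness that split idempotents are closed under pullbacks, then invoke Proposition~\ref{proposition:retractive-display-systems} to conclude retractivity, with Proposition~\ref{proposition:display-maps-form-maximal-display-system} already supplying the display-system structure. No discrepancies to report.
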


\begin{theorem}
\label{theorem:-retractive-tangent-display-systems}
If the split idempotents of a tangent category $(\X,\TT)$ are closed under pullbacks, then $(\X,\TT)$ is retractive. In particular, a Cauchy complete tangent category is retractive.
\begin{proof}
The proof is essentially identical to the proof of Proposition~\ref{proposition:retractive-display-systems}, so we leave it to the reader to fill in the details.
\end{proof}
\end{theorem}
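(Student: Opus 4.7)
The plan is to adapt the argument of Proposition~\ref{proposition:retractive-display-systems} by running it along every power of $\T$. Fix a tangent display map $q_1\colon E_1\to M$ and a section-retraction pair $(s_E,r_E)\colon E_2\leftrightarrows E_1$. I must verify that $q_2\=s_Eq_1$ is again tangent display, i.e., that for every $n\geq 0$ the morphism $\T^n q_2$ admits all $\T$-pullbacks in $\X$.

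The first key observation is that the proof of Proposition~\ref{proposition:retractive-display-systems} in fact yields a stronger statement: under the same split-idempotent hypothesis, if $q_1$ merely admits all $\T$-pullbacks (rather than just all pullbacks), then so does $s_E q_1$. Indeed, given $f\colon N\to M$, the pullback of $q_1$ along $f$ is now a $\T$-pullback by hypothesis; the idempotent $e\colon P_1\to P_1$ produced in the original proof still satisfies $e^2=e$, because those equations involve only the universal property of the pullback and the identity $s_E r_E=\id_{E_2}$, neither of which interacts with $\T$. The splitting $e=r_P s_P$, guaranteed by the hypothesis on split idempotents, then yields the retract diagram for $s_E q_1$ along $f$, and since Lemma~\ref{lemma:pullbacks-retraction} is stated for both pullbacks and $\T$-pullbacks, the retract diagram is itself a $\T$-pullback.

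The second step is to lift this to all iterates of $\T$. Because $\T^n$ is a functor, $(\T^n s_E,\T^n r_E)\colon\T^n E_2\leftrightarrows\T^n E_1$ is again a section-retraction pair, and by Theorem~\ref{theorem:T-display-maps-form-maximal-T-display-system} the morphism $\T^n q_1$ admits all $\T$-pullbacks in $\X$. Applying the strengthened form of Proposition~\ref{proposition:retractive-display-systems} to $\T^n q_1$ with the section-retraction pair $(\T^n s_E,\T^n r_E)$, I conclude that $\T^n(s_E q_1)=(\T^n s_E)(\T^n q_1)$ admits all $\T$-pullbacks. Since $n$ was arbitrary, $q_2$ is a tangent display map, so $(\X,\TT)$ is retractive. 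The ``in particular'' clause then follows at once from Lemma~\ref{lemma:split-idempotents}. The only real obstacle, essentially bookkeeping, is verifying that the retract construction in Proposition~\ref{proposition:retractive-display-systems} transports faithfully through $\T^n$; this requires no new categorical input, as it relies solely on the pullback universal property and the section-retraction equations, both of which are preserved by $\T^n$.
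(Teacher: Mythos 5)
Your proposal is correct and follows essentially the same route the paper intends: it reruns the retract-and-split-idempotent argument of Proposition~\ref{proposition:retractive-display-systems}, upgraded from pullbacks to $\T$-pullbacks via Lemma~\ref{lemma:pullbacks-retraction}, and then applies it to each $\T^n q_1$ with the section-retraction pair $(\T^n s_E,\T^n r_E)$ to cover every iterate required by Definition~\ref{definition:T-display-maps}. The only cosmetic remark is that the fact that $\T^n q_1$ admits all $\T$-pullbacks is immediate from the definition of a tangent display map and does not need Theorem~\ref{theorem:T-display-maps-form-maximal-T-display-system}.
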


\subsection{The Cauchy completion of a tangent category}
\label{subsection:cauchy-completion}
Every category $\X$ is canonically embedded in a Cauchy complete category, called the Cauchy completion of $\X$ (the notion of a Cauchy complete category was introduced by Lawvere in~\cite{lawvere:metric-spaces-cauchy-completeness}. For more details on Cauchy completion, we refer to~\cite{borceux:cauchy-completion}). Concretely the Cauchy completion of $\X$, also known as the Karoubi envelope of $\X$, is the category $\Split(\X)$ whose objects are pairs $(M,e)$ formed by an object $M$ of $\X$ and an idempotent $e\colon M\to M$ of $M$, and whose morphisms $f\colon(M,e)\to(M',e')$ are morphisms $f\colon M\to M'$ of $\X$ for which $fe'=ef$.
\par Corollary~\ref{corollary:cauchy-retractive-display} establishes an interesting relationship between Cauchy complete categories and tangent structures, so it is natural to wonder if the Cauchy completion of a tangent category is still a tangent category. Let us start by observing a simple fact.

\begin{lemma}
Given a category $\X$, the functor $\bar{(-)}\colon\End(\X)\to\End(\Split(\X))$ which sends an endofunctor $F\colon\X\to\X$ to the endofunctor $\bar F\colon\Split(\X)\to\Split(\X)$ so defined:
\begin{align*}
&\bar F(M,e)\=(FM,Fe)\\
&\bar F(f\colon(M,e)\to(M',e'))\=Ff\colon(FM,Fe)\to(FM',Fe')
\end{align*}
is a strict monoidal functor with respect to the monoidal structure defined by the composition of endofunctors.
\end{lemma}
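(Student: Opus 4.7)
The plan is to verify the three required properties in sequence: (i) for each $F\in\End(\X)$, the assignment $\bar F$ is a well-defined endofunctor of $\Split(\X)$; (ii) the assignment $F\mapsto\bar F$ extends to a functor $\End(\X)\to\End(\Split(\X))$; and (iii) this functor is strict monoidal with respect to composition of endofunctors. Each step reduces to an elementary chase of definitions.

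For (i), I would first observe that $F$ preserves idempotents, since $Fe\o Fe=F(e\o e)=Fe$, so $(FM,Fe)$ is a bona fide object of $\Split(\X)$. If $f\colon(M,e)\to(M',e')$ is a morphism of $\Split(\X)$, meaning $fe'=ef$, then applying $F$ yields $Ff\o Fe'=F(fe')=F(ef)=Fe\o Ff$, so $Ff$ descends to a morphism $(FM,Fe)\to(FM',Fe')$. Preservation of identities and composition in $\Split(\X)$ is then immediate from functoriality of $F$.

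For (ii), given a natural transformation $\alpha\colon F\Rightarrow G$, the natural candidate is $\bar\alpha_{(M,e)}\=\alpha_M$. The condition for this to define a morphism $(FM,Fe)\to(GM,Ge)$ in $\Split(\X)$ is $\alpha_M\o Ge=Fe\o\alpha_M$, which is precisely the naturality square of $\alpha$ at the morphism $e$ of $\X$. Likewise, the naturality of $\bar\alpha$ at a morphism $f\colon(M,e)\to(M',e')$ of $\Split(\X)$ unfolds to the naturality of $\alpha$ at $f$ in $\X$. Preservation of identity natural transformations and of vertical composition by $\bar{(-)}$ then holds pointwise.

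For (iii), the monoidal unit of $(\End(\X),\o)$ is the identity functor $\id_\X$, and one checks immediately that $\overline{\id_\X}(M,e)=(M,e)$ and $\overline{\id_\X}(f)=f$, so $\overline{\id_\X}=\id_{\Split(\X)}$ on the nose. For the tensor, given $F,G\in\End(\X)$ one has $\overline{FG}(M,e)=(FGM,FGe)=\bar F(GM,Ge)=\bar F\bar G(M,e)$, and similarly $\overline{FG}(f)=FGf=\bar F\bar Gf$, so $\overline{F\o G}=\bar F\o\bar G$; the corresponding identity for horizontal composition of natural transformations is again pointwise. I do not anticipate any genuine obstacle here, the subtlest point being simply the recognition that the naturality of $\alpha$ at the idempotent $e$ is exactly what guarantees each component of $\bar\alpha$ lies in $\Split(\X)$.
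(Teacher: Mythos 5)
Your proposal is correct, and in fact the paper offers no proof of this lemma at all (it is recorded as a ``simple fact''), so your write-up supplies exactly the routine verification the authors take for granted: well-definedness of $\bar F$ via $F$ preserving idempotents and the commutation condition $fe'=ef$, the action on natural transformations via $\bar\alpha_{(M,e)}=\alpha_M$ (with the key observation that naturality of $\alpha$ at the idempotent $e$ is what makes each component a morphism of $\Split(\X)$), and strictness of the monoidal comparison on the nose. Nothing is missing; this is the standard and essentially unique argument.
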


Using Leung's definition of a tangent category $(\X,\TT)$ (cf.~\cite{leung:weil-algebras}) as a strict monoidal functor $\Weil\to\End(\X)$ which preserves certain pullback diagrams, one can post-compose this strict monoidal functor with $\bar{(-)}$ and obtain a strict monoidal functor $\Weil\to\End(\Split(\X))$. It is not hard to show that such a strict monoidal functor preserves the required pullbacks to define a tangent structure on $\Split(\X)$.

\begin{theorem}
\label{theorem:cauchy-completion-tangent-cats}
The Cauchy completion $\Split(\X)$ of a tangent category $(\X,\TT)$ (with negatives) is still a tangent category (with negatives) denoted by $\Split(\X,\TT)$ and the fully faithful functor $\iota\colon\X\to\Split(\X)$ strictly preserves the tangent structures. Moreover, $\iota$ preserves tangent display maps and in particular, if $(\X,\TT)$ is a well-displayed tangent category (with negatives), so is $\Split(\X,\TT)$.
\end{theorem}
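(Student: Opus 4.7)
The plan is to follow the outline sketched in the paragraph preceding the theorem: invoke Leung's characterisation of tangent structures as strict monoidal functors $\TT\colon\Weil\to\End(\X)$ preserving a distinguished class of pullback diagrams, and post-compose with the strict monoidal functor $\bar{(-)}\colon\End(\X)\to\End(\Split(\X))$ supplied by the preceding lemma. First I will verify that the composite $\Weil\to\End(\Split(\X))$ preserves the required Weil pullbacks. Since $\bar F(M,e)=(FM,Fe)$, the distinguished pullback $\bar\T_n(M,e)$ in $\Split(\X)$ can be computed by taking the pullback $\T_nM$ in $\X$ and equipping it with the idempotent $\T_ne$ induced via the universal property; preservation then follows from the analogous preservation by $\TT$ in $\X$. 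The same computation handles the universality-of-the-vertical-lift pullback, and if $(\X,\TT)$ has negatives, the negation natural transformation lifts pointwise to $\Split(\X)$.

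Next, the fully faithful functor $\iota\colon\X\to\Split(\X)$ sends $M\mapsto(M,\id_M)$, so $\bar F(M,\id_M)=(FM,\id_{FM})=\iota FM$ for every endofunctor $F$ on $\X$. Hence $\bar F\circ\iota=\iota\circ F$ on the nose; the structural natural transformations of $\bar\TT$ act on objects in the image of $\iota$ exactly as those of $\TT$, and all distinguished pullbacks are preserved strictly. This establishes that $\iota$ strictly preserves the tangent structures.

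To show that $\iota$ preserves tangent display maps, let $q\colon E\to M$ be a tangent display map in $\X$ and take any $f\colon(N,e_N)\to\iota\T^nM$ in $\Split(\X)$; its underlying morphism $f\colon N\to\T^nM$ satisfies $e_Nf=f$ by the morphism condition. Since $\T^nq$ admits all tangent pullbacks in $\X$, the pullback $P=\T^nE\times_{\T^nM}N$ exists and is preserved by every $\T^m$. The idempotent $e_N$ on $N$, together with the identity on $\T^nE$, induces a canonical idempotent $e_P\colon P\to P$ via the universal property of the pullback, and because $\Split(\X)$ is Cauchy complete, $e_P$ splits. The splitting is the desired pullback of $\iota q$ along $f$ in $\Split(\X)$; applying $\T^m$ to the entire construction yields preservation by $\bar\T^m$, so by Lemma~\ref{proposition:equivalent-definition-tangent-display-maps}, $\iota q$ is a tangent display map in $\Split(\X)$.

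Finally, assuming $(\X,\TT)$ is well-displayed, I will show that $\bar p_{(M,e)}\colon(\T M,\T e)\to(M,e)$ is tangent display. The key observation is that $\bar p_{(M,e)}$ is a retract of $\iota p_M$ in the arrow category of $\Split(\X)$ via the section-retraction pairs $(e,e)\colon(M,e)\leftrightarrows\iota M$ and $(\T e,\T e)\colon(\T M,\T e)\leftrightarrows\iota\T M$, with commutativity of the two connecting squares following from the naturality of $p$. Since $\iota p_M$ is tangent display by the previous paragraph, for any $f\colon(N,e_N)\to(M,e)$ the pullback of $\iota p_M$ along the composite $(N,e_N)\to(M,e)\to\iota M$ exists and is a tangent pullback; splitting the idempotent on this pullback induced by the retract data (using Cauchy completeness) and invoking Lemma~\ref{lemma:pullbacks-retraction} produces the corresponding pullback of $\bar p_{(M,e)}$ along $f$. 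Applying the same retract argument to the $\bar\T^m$-image of the diagram delivers preservation by $\bar\T^m$, so $\bar p_{(M,e)}$ is tangent display. I expect the main technical obstacle to lie in this last paragraph, specifically in organising the three-dimensional retract diagram required by Lemma~\ref{lemma:pullbacks-retraction} cleanly; once spelled out, however, the lemma does the heavy lifting.
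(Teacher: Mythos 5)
Your proposal is correct and, for the part of the theorem the paper actually argues, follows the paper's own route exactly: the paper's entire justification is the paragraph preceding the statement, namely that $\bar{(-)}\colon\End(\X)\to\End(\Split(\X))$ is strict monoidal, so post-composing Leung's strict monoidal functor $\Weil\to\End(\X)$ gives one into $\End(\Split(\X))$, with the required pullbacks ``not hard'' to check. Your computation of those pullbacks as $(\T_nM,\T_ne)$, i.e.\ the $\X$-pullback equipped with the induced idempotent, is the standard way to make that remark precise, and your observation that $\bar F\iota=\iota F$ on the nose gives strict preservation. The claims about tangent display maps are asserted in the paper with no proof at all, so here you are supplying genuinely new detail; your argument --- compute the pullback of the underlying cospan in $\X$, equip it with the induced idempotent, and use that $\bar\T^m$ of this object is again of the same form --- is sound (note that the splitting of $e_P$ is literally the object $(P,e_P)$, so the appeal to Cauchy completeness there is harmless but not needed). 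One point worth making explicit in your last paragraph: the retract exhibiting $\bar p_{(M,e)}$ inside $\iota p_M$ changes \emph{both} domain and codomain, whereas the paper's retractivity (Definition~\ref{definition:retractivity} and hence Theorem~\ref{theorem:-retractive-tangent-display-systems}) only covers retracting the domain of a display map over a fixed codomain; so you cannot shortcut via ``Cauchy complete implies retractive''. Your direct appeal to Lemma~\ref{lemma:pullbacks-retraction}, which does allow all four corners to vary, is exactly the right fix, and the induced idempotent on the auxiliary pullback splits for free in $\Split(\X)$. With that prism spelled out, the argument closes.
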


As a result, we also have the following:

\begin{corollary}
\label{corollary:cauchy-completion-tangent-cats}
Every (well-displayed) tangent category $(\X,\TT)$ (with negatives) is strictly embedded in a retractive (well-displayed) tangent category (with negatives).
\end{corollary}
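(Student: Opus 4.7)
The plan is to assemble this corollary directly from the three preceding results, since each piece of what needs to be shown has essentially already been established. Given a tangent category $(\X,\TT)$ (possibly with negatives, possibly well-displayed), I would form its Cauchy completion $\Split(\X)$ and equip it with the tangent structure produced in Theorem~\ref{theorem:cauchy-completion-tangent-cats}, yielding the tangent category $\Split(\X,\TT)$ together with the strict tangent-preserving fully faithful embedding $\iota\colon\X\to\Split(\X)$.

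First I would observe that Theorem~\ref{theorem:cauchy-completion-tangent-cats} already delivers most of the desired conclusions: $\Split(\X,\TT)$ is a tangent category, it has negatives whenever $(\X,\TT)$ does (since the negation natural transformation restricts to the Karoubi envelope in the same way the other structural natural transformations do), $\iota$ strictly preserves the tangent structure, and $\iota$ carries tangent display maps to tangent display maps, so the well-displayed case is automatic.

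The remaining content is retractivity of $\Split(\X,\TT)$. For this I would invoke Theorem~\ref{theorem:-retractive-tangent-display-systems}, which says that any Cauchy complete tangent category is retractive. Since the Karoubi envelope $\Split(\X)$ is Cauchy complete by construction (this is the defining property of the Cauchy completion, and is the only step that is not an immediate citation, though it is entirely standard: every idempotent on $(M,e)$ in $\Split(\X)$ arises from an idempotent on $M$ in $\X$ that factors through $e$, and this idempotent admits a canonical splitting as a new object of $\Split(\X)$), its tangent display maps form a retractive tangent display system, which is precisely the definition of $\Split(\X,\TT)$ being retractive.

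I do not anticipate any serious obstacle: the work has been done in Theorems~\ref{theorem:-retractive-tangent-display-systems} and~\ref{theorem:cauchy-completion-tangent-cats}, and the corollary is essentially a one-line assembly. The only mild subtlety is checking that the parenthetical qualifiers (``with negatives'' and ``well-displayed'') propagate through the Cauchy completion, but both follow immediately from the strict preservation properties of $\iota$ together with the explicit description of $\bar{(-)}\colon\End(\X)\to\End(\Split(\X))$: negatives are transported by $\bar{(-)}$ applied to the negation transformation, and well-displayedness of $\Split(\X,\TT)$ amounts to the tangent bundles $\bar\T(M,e)=(\T M,\T e)$ being tangent display maps, which follows from the corresponding property in $\X$ combined with Proposition~\ref{proposition:retractive-display-systems} applied in $\Split(\X)$.
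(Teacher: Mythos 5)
Your proposal is correct and follows essentially the same route as the paper, which presents this corollary as an immediate consequence of Theorem~\ref{theorem:cauchy-completion-tangent-cats} (the Karoubi envelope carries the tangent structure, negatives, and well-displayedness, with a strict embedding) combined with Theorem~\ref{theorem:-retractive-tangent-display-systems} (Cauchy complete tangent categories are retractive). The only cosmetic remark is that your final appeal to Proposition~\ref{proposition:retractive-display-systems} for well-displayedness is unnecessary (and should in any case be Theorem~\ref{theorem:-retractive-tangent-display-systems}, the tangent version), since Theorem~\ref{theorem:cauchy-completion-tangent-cats} already asserts that $\Split(\X,\TT)$ is well-displayed whenever $(\X,\TT)$ is.
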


\subsection{Tangent display maps generalize submersions}
\label{subsection:submersions}

The goal of this section is to show that the tangent display maps in the tangent category of smooth manifolds are precisely the submersions.  We begin with a general discussion of submersions and related maps in an arbitrary tangent category $(\X,\TT)$ as introduced in~\cite[Definition 1.2.5] {macadam:vector-bundles}.

\begin{definition}
\label{definition:submersions}
A \textbf{submersion} in $(\X,\TT)$ consists of a morphism $q\colon E\to M$ for which the commutative diagram:
\begin{equation}
\label{equation:naturality-projection}
\begin{tikzcd}
{\T E} & E \\
{\T M} & M
\arrow["p", from=1-1, to=1-2]
\arrow["{\T q}"', from=1-1, to=2-1]
\arrow["q", from=1-2, to=2-2]
\arrow["p"', from=2-1, to=2-2]
\end{tikzcd}
\end{equation}
is a weak tangent pullback diagram. Concretely, this means that for any pair of morphisms $\alpha\colon X\to\T M$ and $\beta\colon X\to E$ such that $\alpha p=\beta q$ there is a (not necessarily unique) morphism $\varphi\colon X\dashrightarrow\T M$ such that $\varphi\T q=\alpha$ and $\varphi p=\beta$. Moreover, all the iterates $\T^n$ of the tangent bundle functor preserve the (uni)versality property of this diagram.\\
A \textbf{display submersion} in $(\X,\TT)$ is a submersion which is also a tangent display map.
\end{definition}

One can similarly define \'etale maps in a tangent category.

\begin{definition}
\label{definition:etale-maps}
An \textbf{\'etale map} in $(\X,\TT)$ consists of a morphism $q\colon E\to M$ for which the diagram of Equation~\eqref{equation:naturality-projection} is a tangent pullback. A \textbf{display \'etale map} in $(\X,\TT)$ is an \'etale map which is also a tangent display map.
\end{definition}

In~\cite[Proposition~1.2.10]{macadam:vector-bundles}, MacAdam proved that submersions in a tangent category form a retractive tangent display system, provided that submersions are stable under pullbacks. Display submersions are the correct class of morphisms which exactly satisfy this requirement. In the next proposition, we reformulate in our language MacAdam's result and extend it to \'etale maps as well.

\begin{proposition}
\label{proposition:display-submersions-form-tangent-display-system}
The families $\Smer(\X,\TT)$ and $\Etal(\X,\TT)$ of display submersions and display \'etale maps of $(\X,\TT)$, respectively, form retractive tangent display systems of $(\X,\TT)$.
\begin{proof}
It is straightforward to see that both $\Smer(\X,\TT)$ and $\Etal(\X,\TT)$ are stable under the tangent bundle functor. Furthermore, since (weak) pullbacks are stable under retracts, it is easy to prove that submersions and \'etale maps are also stable under retracts.
\par We need to prove that they are also stable under tangent pullbacks. Let us start by considering a display submersion $q\colon E\to M$ and a morphism $f\colon N\to M$. Since $q$ is a tangent display map, the tangent pullback of $q$ along $f$ is well-defined:
\begin{equation*}
\begin{tikzcd}
P & E \\
N & M
\arrow["{\pi_2}", from=1-1, to=1-2]
\arrow["{\pi_1}"', from=1-1, to=2-1]
\arrow["\lrcorner"{anchor=center, pos=0.125}, draw=none, from=1-1, to=2-2]
\arrow["q", from=1-2, to=2-2]
\arrow["f"', from=2-1, to=2-2]
\end{tikzcd}
\end{equation*}
We want to show that $\pi_1\colon P\to N$ is still a display submersion. Since tangent display maps form a tangent display system, we already know that $\pi_1$ is a tangent display map. We need to show that $\pi_1$ is a submersion. If we have morphisms $\alpha\colon X\to\T N$ and $\beta\colon X\to P$ such that $\alpha p=\beta\pi_1$, we need to show the existence of a morphism $\varphi\colon X\to\T P$ for which $\varphi\T\pi_1=\alpha$ and $\varphi p=\beta$. Consider the following diagram:
\begin{equation*}
\begin{tikzcd}
X && {\T E} && E \\
& {\T P} && P \\
&& {\T M} && M \\
& {\T N} && N
\arrow["\beta"{pos=0.4}, bend left=20, from=1-1, to=2-4]
\arrow["\alpha"', bend right=20, from=1-1, to=4-2]
\arrow["p", from=1-3, to=1-5]
\arrow["{\T q}"{pos=0.6}, from=1-3, to=3-3]
\arrow["q", from=1-5, to=3-5]
\arrow["{\T\pi_2}", from=2-2, to=1-3]
\arrow["p"{pos=0.4}, from=2-2, to=2-4]
\arrow["{\T\pi_1}"', from=2-2, to=4-2]
\arrow["{\pi_2}", from=2-4, to=1-5]
\arrow["{\pi_1}"{pos=0.6}, from=2-4, to=4-4]
\arrow["p"{pos=0.4}, from=3-3, to=3-5]
\arrow["{\T f}"', from=4-2, to=3-3]
\arrow["p"', from=4-2, to=4-4]
\arrow["f"', from=4-4, to=3-5]
\end{tikzcd}
\end{equation*}
Since $q$ is a submersion, there exists a morphism $\psi\colon X\to\T E$ such that the following diagram commutes:
\begin{equation*}
\begin{tikzcd}
X && P \\
& {\T E} & E \\
{\T N} & {\T M} & M
\arrow["\beta", from=1-1, to=1-3]
\arrow["\psi"{description}, dashed, from=1-1, to=2-2]
\arrow["\alpha"', from=1-1, to=3-1]
\arrow["{\pi_2}", from=1-3, to=2-3]
\arrow["p", from=2-2, to=2-3]
\arrow["{\T q}"', from=2-2, to=3-2]
\arrow["q", from=2-3, to=3-3]
\arrow["{\T f}"', from=3-1, to=3-2]
\arrow["p"', from=3-2, to=3-3]
\end{tikzcd}
\end{equation*}
In particular:
\begin{align*}
&\psi\T q=\alpha\T f\\
&\psi p=\beta\pi_2
\end{align*}
Moreover, the pullback diagram of $q$ along $f$ is preserved by $\T$, therefore, there exists a unique morphism $\varphi\colon X\to\T P$ making the following diagram commutative:
\begin{equation*}
\begin{tikzcd}
X \\
& {\T P} & {\T E} \\
& {\T N} & {\T M}
\arrow["\varphi"{description}, dashed, from=1-1, to=2-2]
\arrow["\psi", bend left, from=1-1, to=2-3]
\arrow["\alpha"', bend right, from=1-1, to=3-2]
\arrow["{\T\pi_2}", from=2-2, to=2-3]
\arrow["{\T\pi_1}"', from=2-2, to=3-2]
\arrow["\lrcorner"{anchor=center, pos=0.125}, draw=none, from=2-2, to=3-3]
\arrow["{\T q}", from=2-3, to=3-3]
\arrow["{\T f}"', from=3-2, to=3-3]
\end{tikzcd}
\end{equation*}
In particular:
\begin{align*}
&\varphi\T\pi_1=\alpha\\
&\varphi\T\pi_2=\psi
\end{align*}
We need to show that $\varphi p=\beta$. Let us employ the universality of the pullback of $q$ along $f$ and show that $\varphi p\pi_1=\beta\pi_1$ and $\varphi p\pi_2=\beta\pi_2$, since this implies the desired equation:
\begin{eqnarray*}
& &\varphi p\pi_1\\
&=&\varphi\T\pi_1p\\
&=&\alpha p\\
&=&\beta\pi_1\\
& &\\
& &\varphi p\pi_2\\
&=&\varphi\T\pi_2p\\
&=&\psi p\\
&=&\beta\pi_2
\end{eqnarray*}
This shows that $\varphi$ is the desired morphism. Let us now show that, whenever $q$ is a display \'etale map, the morphism $\varphi$ is the unique morphism such that $\varphi\T\pi_1=\alpha$ and $\varphi p=\beta$. Suppose that $\varphi'\colon X\to\T P$ is another morphism with this property. Therefore:
\begin{eqnarray*}
& &\varphi'\T\pi_2 p\\
&=&\varphi'p\pi_2\\
&=&\beta\pi_2\\
& &\\
& &\varphi'\T\pi_2\T q\\
&=&\varphi'\T\pi_1\T f\\
&=&\alpha\T f
\end{eqnarray*}
However, the same equations are satisfied by $\psi\colon X\to\T E$ and since $q$ is \'etale, $\psi$ is the unique morphism with this property. Thus, $\varphi'\T\pi_2=\psi$. However, $\varphi$ is the unique morphism such that $\varphi\T\pi_1=\alpha$ and $\varphi\T\pi_2=\psi$. Therefore, $\varphi'=\varphi$. This proves that $\pi_1$ is a display \'etale map whenever so is $q$. 
\end{proof}
\end{proposition}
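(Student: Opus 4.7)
The plan is to verify the three conditions packed into ``retractive tangent display system'': stability under $\T$, stability under $\T$-pullbacks, and retractivity. Stability under $\T$ is essentially immediate for both families: tangent display maps are closed under $\T$ by Theorem~\ref{theorem:T-display-maps-form-maximal-T-display-system}, and the (weak) tangent pullback square in Definition~\ref{definition:submersions} is required to be preserved by every $\T^n$, so the submersion/\'etale property is $\T$-stable too. Retractivity I would handle by invoking Lemma~\ref{lemma:pullbacks-retraction}: since (weak) tangent pullbacks are stable under retracts, the submersion square of a retract is a (weak) tangent pullback whenever the original submersion square is, and the same lemma applies to the stronger \'etale universal property.

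The bulk of the argument is closure under tangent pullbacks. Fix a display submersion $q\colon E\to M$ and an arbitrary $f\colon N\to M$. Because $q$ is tangent display, the tangent pullback $\pi_1\colon P\to N$, $\pi_2\colon P\to E$ of $q$ along $f$ exists and is again a tangent display map by Theorem~\ref{theorem:T-display-maps-form-maximal-T-display-system}. It remains to show $\pi_1$ is a submersion. Given $\alpha\colon X\to\T N$ and $\beta\colon X\to P$ with $\alpha p=\beta\pi_1$, the idea is a two-step lift. First, push the data downstream: using naturality of $p$ and the pullback relation $\pi_1 f=\pi_2 q$, the pair $(\alpha\T f,\beta\pi_2)$ is compatible for the submersion square of $q$, so there exists $\psi\colon X\to\T E$ with $\psi p=\beta\pi_2$ and $\psi\T q=\alpha\T f$. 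Second, because the pullback is a $\T$-pullback, $\T P$ is the pullback of $\T q$ along $\T f$, and the compatible pair $(\alpha,\psi)$ induces a unique $\varphi\colon X\to\T P$ with $\varphi\T\pi_1=\alpha$ and $\varphi\T\pi_2=\psi$. Finally, I would verify $\varphi p=\beta$ by comparing projections: $\varphi p\pi_i=\beta\pi_i$ for $i=1,2$, using naturality of $p$ in each case, and then the pullback universal property at $P$ forces $\varphi p=\beta$.

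For the \'etale case, everything above goes through and one additionally verifies uniqueness of $\varphi$. If $\varphi'\colon X\to\T P$ is another lift, then $\varphi'\T\pi_2$ satisfies exactly the equations that pin down $\psi$ uniquely via the \'etale property of $q$, forcing $\varphi'\T\pi_2=\psi$; combined with $\varphi'\T\pi_1=\alpha$ and uniqueness in the $\T$-pullback at $\T P$, this gives $\varphi'=\varphi$. The main obstacle I expect is the diagrammatic bookkeeping: keeping straight which universal property is being invoked at each step (the submersion/\'etale property of $q$, the $\T$-pullback property at $P$, or the ordinary pullback property at $P$) and verifying the compatibility equations along the way. The rest is a clean unwinding of the definitions, with the \'etale case differing from the submersion case only in the extra uniqueness check.
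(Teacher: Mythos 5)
Your proposal is correct and follows essentially the same route as the paper's proof: the same two-step lift (first using the submersion property of $q$ to produce $\psi$, then the preservation of the pullback by $\T$ to produce $\varphi$), the same verification of $\varphi p=\beta$ via the projections, and the same uniqueness argument through $\varphi'\T\pi_2=\psi$ in the \'etale case. The treatment of stability under $\T$ and of retractivity via Lemma~\ref{lemma:pullbacks-retraction} also matches the paper.
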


Let $\Smooth$ denote the tangent category of finite-dimensional smooth manifolds.

\begin{proposition}
\label{proposition:submersions-etale-differential-geometry}
In the tangent category $\Smooth$, submersions and \'etale maps according to Definitions~\ref{definition:submersions} and~\ref{definition:etale-maps} coincide with the usual notions of submersions and \'etale maps in differential geometry, respectively. Moreover, in this tangent category, submersions and \'etale maps are automatically display submersions and display \'etale maps, respectively.
\begin{proof}
The first claim is immediate upon examining the relevant definitions, and the second claim follows from Theorem 35.13 (2) in \cite{kolavr:differential-geometry} (using the particular case of $\T_A$ being the tangent bundle functor). 
\end{proof}
\end{proposition}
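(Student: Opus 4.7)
The plan is to address the two claims separately, unfolding definitions for the first and invoking a classical result for the second.

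For the first claim, I would test the tangent-categorical conditions against the pointwise classical characterizations. Recall that a smooth map $q\colon E\to M$ is a submersion in the classical sense iff $\d q_x\colon T_xE\to T_{q(x)}M$ is surjective for every $x\in E$, and is \'etale iff $\d q_x$ is a bijection everywhere. Testing the weak tangent pullback condition of Definition~\ref{definition:submersions} with $X$ a one-point manifold, a compatible pair $(\alpha,\beta)$ is exactly a tangent vector $v\in T_{q(x)}M$ together with a point $x\in E$ over $q(x)$, and the existence of a lift $\varphi\colon X\to\T E$ is precisely the existence of some $w\in T_xE$ with $\d q_x(w)=v$. Thus the pointwise form of the weak pullback condition coincides with surjectivity of $\d q$. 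Conversely, when $\d q$ is surjective, the local submersion theorem represents $q$ in suitable charts as a linear projection $\R^{k+\ell}\to\R^k$, and any smooth map $\alpha\colon X\to\T M$ can be lifted smoothly to $\T E$ using a local right inverse to this projection and a partition of unity. Preservation by all iterates $\T^n$ is automatic, since $\T^nq$ is again a submersion whenever $q$ is. Adding a uniqueness requirement, as in Definition~\ref{definition:etale-maps}, forces $\d q_x$ to be injective as well, recovering the classical notion of an \'etale map (local diffeomorphism).

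For the second claim, my plan is to invoke Kol\'a\v r--Michor--Slov\'ak's Theorem~35.13(2) in~\cite{kolavr:differential-geometry}, which asserts that the pullback of a submersion along any smooth map exists in $\Smooth$ and is preserved by every Weil bundle functor, in particular by every iterate $\T^n$ of the tangent bundle functor. Rephrased in our terminology, this says that every submersion admits all $\T$-pullbacks. Combined with the first claim, and with the fact that $\T q$ of a submersion is again a submersion (so that we may iterate), this gives that every submersion is a tangent display map and hence a display submersion. Specializing to \'etale maps recovers the analogous statement for display \'etale maps.

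I do not anticipate a serious obstacle in this argument: the content beyond the classical theorem is essentially bookkeeping of definitions. The only step requiring a bit of genuine work is the passage from the pointwise statement (surjectivity of $\d q_x$) to the full parametric weak pullback property for arbitrary test manifolds $X$, which reduces to the existence of local smooth sections of a submersion.
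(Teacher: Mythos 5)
Your proposal is correct and follows essentially the same route as the paper, which simply asserts that the first claim is immediate from the definitions (you supply the pointwise test with $X$ a point plus the local-section argument for the converse) and cites Theorem~35.13(2) of \cite{kolavr:differential-geometry} for the second claim, exactly as you do. The only content you add beyond the paper's terse proof is the bookkeeping showing that the classical surjectivity of $\d q$ yields the parametric weak pullback property, which is a standard consequence of the existence of smooth local sections of a submersion.
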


Next, we would like to show that the tangent display maps in $\Smooth$ are precisely the same as the submersions.  However, it is worth making some general comments about this result first.  In particular, \cite[Lemma 71]{metzler:stacks} claims the stronger result that in the category $\Smooth$, \emph{every} display map (not just a tangent display map) is a submersion. However, some commentators have raised doubts about the proof\footnote{See, for example, mathoverflow.net/questions/71902/is-every-representable-map-a-submersion.}. 
It is not clear to us whether  \cite[Lemma 71]{metzler:stacks} is true or not; nevertheless, for our purposes, the weaker result we prove here suffices.  However, we should  note that our proof is heavily inspired by the proof of \cite[Lemma 71]{metzler:stacks}, while also using the additional assumption that pullbacks along the given map are preserved by $\T$.  

We begin with a lemma which looks at a particular class of tangent display maps in $\Smooth$.

\begin{lemma}
\label{lemma:projection-cannot-be-singular}
In the tangent category $\Smooth$, any tangent display map with codomain $\R$ is a submersion.
\begin{proof}
For contradiction, suppose that $q\colon E\to\R$ is a tangent display map which is not a submersion; that is, there is a point $a$ of $E$ for which the differential $\d_aq\colon\T_aE\to\T_{q(a)}\R$ of $q$ at $a$ is not surjective. Without loss of generality, assume that $q(a)=0$. Since $\T_0\R$ is a one-dimensional vector space and that $\d_aq$ is a linear non-surjective map, $\d_aq$ must be the zero map. Let us consider the morphism $s\colon\R\to\R$ which sends $x\in\R$ to $x^2$. Since $q$ is a tangent display map, by definition, the pullback diagram:
\begin{equation*}
\begin{tikzcd}
Q & E \\
\R & \R
\arrow["{\pi_2}", from=1-1, to=1-2]
\arrow["{\pi_1}"', from=1-1, to=2-1]
\arrow["\lrcorner"{anchor=center, pos=0.125}, draw=none, from=1-1, to=2-2]
\arrow["q", from=1-2, to=2-2]
\arrow["s"', from=2-1, to=2-2]
\end{tikzcd}
\end{equation*}
exists and is preserved by the tangent bundle functor $\T$. The underlying set of $Q$ is computed as follows:
\begin{align*}
&Q=\{(x,b)\in\R\times E,x^2=q(b)\}
\end{align*}
In particular, if $E$ has dimension $n$, $\R\times E$ has dimension $n+1$, and since $x^2=q(b)$ imposes a constraint, $Q$ has dimension $n+1-1=n$. Since $\T$ preserves the pullback of $q$ along $s$, the diagram:
\begin{equation*}
\begin{tikzcd}
{\T_{(0,a)}Q} & {\T_aE} \\
{\T_0\R} & {\T_0\R}
\arrow["{\d_{(0,a)}\pi_2}", from=1-1, to=1-2]
\arrow["{\d_{(0,a)}\pi_1}"', from=1-1, to=2-1]
\arrow["{d_aq}", from=1-2, to=2-2]
\arrow["{\d_0s}"', from=2-1, to=2-2]
\end{tikzcd}
\end{equation*}
is a pullback diagram. So, we can compute the tangent space of $Q$ at $(0,a)$ as follows:
\begin{align*}
&\T_{(0,a)}Q=\{(y,u)\in\T_0\R\times\T_aE,\d_0s(y)=\d_aq(u)\}
\end{align*}
However, $\d_0s(y)=2xy|_{x=0}=0$ and, by assumption, $\d_aq$ is also the zero map. Therefore, the equation $\d_0s(y)=\d_aq(u)$ does not impose any constraint. Thus, $\T_{(0,a)}Q\cong\T_0\R\times\T_aE\cong\R\times\T_aE$ has dimension $n+1$, contradicting that $Q$ has dimension $n$.
\end{proof}
\end{lemma}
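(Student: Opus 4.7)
The plan is to argue by contradiction along the lines of \cite[Lemma~71]{metzler:stacks}, using the full strength of the tangent display hypothesis. Suppose $q\colon E \to \R$ is a tangent display map that fails to be a submersion at some point $a \in E$; since $\d_a q$ lands in the one-dimensional space $\T_{q(a)}\R$, failure of surjectivity forces $\d_a q = 0$. After composing $q$ with a translation I may assume $q(a) = 0$. The key test map is the squaring map $s\colon \R \to \R$, $s(x) = x^2$, whose differential at $0$ is also zero. Because $q$ is a tangent display map, the pullback $Q$ of $q$ along $s$ exists in $\Smooth$ \emph{and} is preserved by every iterate of $\T$.

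The next step is to extract two pieces of information about $Q$ near $(0,a)$. Set-theoretically, any limit in $\Smooth$, when it exists, agrees with the set-theoretic limit (because global points of $Q$ are precisely solutions of $sf_1 = qf_2$), so the underlying set of $Q$ is $\{(x,b) \in \R \times E : x^2 = q(b)\}$. For the smooth structure, applying $\T$ to the pullback square identifies $\T_{(0,a)}Q$ with the pullback in vector spaces of the linear maps $\d_a q$ and $\d_0 s$; since both are zero, this pullback is the full product $\T_0\R \oplus \T_a E$, of dimension $n+1$ where $n = \dim E$. In particular $Q$ is an $(n+1)$-dimensional manifold near $(0,a)$, and under this identification $\d_{(0,a)}\pi_1$ and $\d_{(0,a)}\pi_2$ are the two coordinate projections.

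The final step is to derive a contradiction via the inverse function theorem. The smooth map $(\pi_1,\pi_2)\colon Q \to \R \times E$ has differential at $(0,a)$ equal to the identity of $\T_0\R \oplus \T_a E$, so it is a local diffeomorphism at that point and hence an open map. Its image therefore contains an open neighbourhood of $(0,a)$ in $\R \times E$. But that image lies inside the closed locus $\{(x,b) : x^2 = q(b)\}$, which has empty interior at $(0,a)$: for any $\epsilon \neq 0$ the point $(\epsilon,a)$ violates the equation because $\epsilon^2 \neq 0 = q(a)$. The main subtlety I expect to wrestle with is justifying that the smooth structure on $Q$ really has local dimension $n+1$ — this is precisely where the preservation of the pullback by $\T$, and not merely its existence in $\Smooth$, is indispensable, and it is what distinguishes tangent display maps from ordinary display maps in this argument.
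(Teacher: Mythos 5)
Your proposal is correct, and it follows the paper's strategy for most of its length: the same reduction to $\d_aq=0$, the same test map $s(x)=x^2$, and the same use of $\T$-preservation of the pullback to identify $\T_{(0,a)}Q$ with the full product $\T_0\R\oplus\T_aE$ of dimension $n+1$. Where you diverge is in how the contradiction is extracted. The paper argues that the underlying set $\{(x,b): x^2=q(b)\}$ ``imposes a constraint'' and therefore $Q$ has dimension $n$, which clashes with the $(n+1)$-dimensional tangent space. You instead apply the inverse function theorem to $(\pi_1,\pi_2)\colon Q\to\R\times E$, whose differential at $(0,a)$ is the identity under the above identification, conclude that its image must contain a neighbourhood of $(0,a)$, and then observe that the points $(\epsilon,a)$ with $\epsilon\neq 0$ are excluded from the constraint locus since $\epsilon^2\neq 0=q(a)$. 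Your version is arguably the more careful one: the paper's assertion that the equation $x^2=q(b)$ cuts the dimension down by exactly one is not justified in general (the zero locus of a smooth function need not have codimension one --- for $E=\R$ and $q(b)=-b^2$ the locus is a single point), whereas your openness argument only uses what is actually forced by the tangent-space computation, namely that $Q$ is $(n+1)$-dimensional near $(0,a)$ and hence cannot inject into a set with empty interior there. Both arguments land in the same place; yours makes explicit the local-diffeomorphism step that the paper's dimension count is implicitly relying on.
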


We can now give a full characterization of submersions. 

\begin{theorem}
\label{theorem:classification-submersions}
In the tangent category $\Smooth$, the following are equivalent:
\begin{enumerate}
\item\label{class-submersion:item1} $q\colon E\to M$ is a submersion in the usual sense of differential geometry;
\item\label{class-submersion:item2} $q$ is a submersion in the sense of Definition~\ref{definition:submersions};
\item\label{class-submersion:item3} $q$ is a display submersion;
\item\label{class-submersion:item4} $q$ is a tangent display map.
\end{enumerate}
In particular, submersions in the tangent category $\Smooth$ form the maximal tangent display system which coincides with $\Smer(\Smooth)$.
\begin{proof}
In Proposition~\ref{proposition:submersions-etale-differential-geometry} we have already shown the equivalence between~\ref{class-submersion:item1},~\ref{class-submersion:item2}, and~\ref{class-submersion:item3}. It is also immediate to see that~\ref{class-submersion:item3} implies~\ref{class-submersion:item4}, since every display submersion is a tangent display map by definition. We need to show that~\ref{class-submersion:item4} implies~\ref{class-submersion:item1}.
\par Let us consider a tangent display map $q\colon E\to M$ and let $a$ be a point of $E$ and $v$ a vector in $\T_{q(a)}M$. We want to find a vector $u\in\T_aE$ for which $\d_aq(u)=v$. The tangent space $\T_xM$ of a smooth manifold $M$ at a given point $x$ of $M$ consists of the first derivatives $\gamma'(0)$ of all the regular paths $\gamma\colon\R\to M$ for which $\gamma(0)=x$. So, in particular, we can choose a regular path $\gamma\colon\R\to M$ such that $\gamma(0)=q(a)$ and $\gamma'(0)=v$. Since $q$ is a tangent display map, the tangent pullback of $q$ along $\gamma$ is well-defined.
\begin{equation*}
\begin{tikzcd}
P & E \\
\R & M
\arrow[from=1-1, to=1-2]
\arrow["{\pi_1}"', from=1-1, to=2-1]
\arrow["\lrcorner"{anchor=center, pos=0.125}, draw=none, from=1-1, to=2-2]
\arrow["q", from=1-2, to=2-2]
\arrow["\gamma"', from=2-1, to=2-2]
\end{tikzcd}
\end{equation*}
We can compute $P$ as follows:
\begin{align*}
&P=\{(t,b)\in\R\times E,\gamma(t)=q(b)\}
\end{align*}
Moreover, since the pullback is preserved by $\T$, we  have that the diagram:
\begin{equation}
\label{equation:classification-submersions}
\begin{tikzcd}
{\T_{(0,a)}P} & {\T_aE} \\
{\T_0\R} & {\T_{q(a)}M}
\arrow[from=1-1, to=1-2]
\arrow["{\d_{(0,a)}\pi_1}"', from=1-1, to=2-1]
\arrow["{\d_aq}", from=1-2, to=2-2]
\arrow["{\d_0\gamma}"', from=2-1, to=2-2]
\end{tikzcd}
\end{equation}
is also a pullback. Therefore, we can compute the tangent space of $P$ at $(0,a)$ as follows:
\begin{align*}
&\T_{(0,a)}P=\{(s,u)\in\T_0\R\times\T_aE,\d_0\gamma(s)=\d_aq(u)\}
\end{align*}
Theorem~\ref{theorem:T-display-maps-form-maximal-T-display-system} establishes that tangent display maps form a tangent display system. So, in particular, the tangent pullback of a tangent display map is again a tangent display map. This implies that $\pi_1\colon P\to\R$ is a tangent display map. Thanks to Lemma~\ref{lemma:projection-cannot-be-singular}, $\pi_1$ is a submersion. In particular, $\d_{(0,a)}\pi_1\colon\T_{(0,a)}P\to\T_0\R$ is surjective. This implies the existence of a pair $(s,u)\in\T_{(0,a)}P$ such that $\d_{(0,a)}\pi_1(s,u)=1$. However, since the diagram of Equation~\eqref{equation:classification-submersions} is a pullback, $\d_{(0,a)}\pi_1$ plays the role of the first projection, thus $\d_{(0,a)}\pi_1(s,u)=s$. Therefore, there is at least a $u\in\T_aE$ such that $(1,u)\in\T_{(0,a)}P$. Therefore:
\begin{align*}
&\d_aq(u)=\d_0\gamma(1)=1\gamma'(0)=v
\end{align*}
This concludes the proof.
\end{proof}
\end{theorem}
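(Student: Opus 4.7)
The plan is to reduce the whole theorem to establishing the implication (iv) $\Rightarrow$ (i), since the equivalence of (i), (ii), and (iii) is already recorded in Proposition~\ref{proposition:submersions-etale-differential-geometry} (using classical results about submersions being stable under pullback and the tangent functor commuting with such pullbacks), and (iii) $\Rightarrow$ (iv) is immediate by the definition of a display submersion. To package the final conclusion, once (iv) $\Rightarrow$ (i) is in hand, the maximality claim follows from Theorem~\ref{theorem:T-display-maps-form-maximal-T-display-system}: display submersions form a tangent display system contained in $\Dsply(\Smooth,\TT)$, and the equivalence shows the containment is an equality.

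For (iv) $\Rightarrow$ (i), the plan is to probe a tangent display map $q\colon E\to M$ along curves. Given a point $a\in E$ and a vector $v\in\T_{q(a)}M$, choose a smooth curve $\gamma\colon\R\to M$ with $\gamma(0)=q(a)$ and $\gamma'(0)=v$. Because $q$ is a tangent display map, the pullback $P$ of $q$ along $\gamma$ exists and is preserved by $\T$, and crucially, by Theorem~\ref{theorem:T-display-maps-form-maximal-T-display-system} the pulled-back projection $\pi_1\colon P\to\R$ is itself a tangent display map. If I knew that every tangent display map into $\R$ is a submersion, then $\d_{(0,a)}\pi_1$ would be surjective; picking a preimage $(1,u)\in\T_{(0,a)}P$ of $1\in\T_0\R$ and reading off the pullback identity $\d_0\gamma(s)=\d_a q(u)$ at the tangent level would yield $\d_a q(u)=\gamma'(0)=v$, which is exactly the surjectivity needed for $q$ to be a submersion.

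Thus the heart of the matter, and what I expect to be the main obstacle, is proving the preparatory lemma that a tangent display map $q\colon E\to\R$ is automatically a submersion (this is Lemma~\ref{lemma:projection-cannot-be-singular}). The plan here is a dimension-counting argument by contradiction. Suppose $\d_a q=0$ at some $a$ with $q(a)=0$; then pull back $q$ along the squaring map $s\colon\R\to\R$, $s(x)=x^2$. Set-theoretically $Q=\{(x,b)\in\R\times E\mid x^2=q(b)\}$, and although $s$ has a critical point at $0$, the constraint cuts down one dimension globally, so $\dim Q=\dim E$. But since $q$ is assumed tangent display, applying $\T$ to the pullback still produces a pullback diagram; computing the fiber at $(0,a)$ gives $\T_{(0,a)}Q=\{(y,u)\mid\d_0 s(y)=\d_a q(u)\}$, and \emph{both} differentials vanish identically at this point, so no constraint is imposed and $\T_{(0,a)}Q\cong\R\times\T_a E$ has dimension $\dim E+1$. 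This exceeds $\dim Q$, contradicting that tangent spaces of a manifold have dimension equal to the manifold. The delicate point, and the reason I expect this to be the main obstacle, is choosing a single test map $s$ whose critical-point behaviour conflicts with the tangent-preservation of the pullback; once $s(x)=x^2$ is identified as the right probe, the rest is a direct computation.
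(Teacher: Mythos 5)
Your proposal is correct and follows essentially the same route as the paper's proof: reduce everything to (iv) $\Rightarrow$ (i), probe a tangent display map along a curve $\gamma$ representing the target vector, invoke the stability of tangent display maps under pullback (Theorem~\ref{theorem:T-display-maps-form-maximal-T-display-system}) to reduce to the case of codomain $\R$, and handle that case via the dimension-counting contradiction with the squaring map $s(x)=x^2$ exactly as in Lemma~\ref{lemma:projection-cannot-be-singular}. No gaps; this matches the paper's argument step for step.
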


\section{Applications}
\label{section:applications}
In this part of the paper, we show that many previous results and ideas in tangent categories can be simplified by asking that certain maps be tangent display maps.

\subsection{Display differential bundles}
\label{subsection:display-differential-bundles}
In differential geometry, vector bundles are an important structure.  Loosely speaking, a vector bundle consists of a projection $q\colon E\to M$ whose fibres carry the structure of a finite-dimensional vector space. The tangent categorical analogs of vector bundles are differential bundles, first introduced by Cockett and the first author in~\cite{cockett:tangent-cats}.

\begin{definition}
\label{definition:differential-bundles}
A \textbf{differential bundle} in a tangent category $(\X,\TT)$ consists of an additive bundle $(q\colon E\to M,z_q\colon M\to E,s_q\colon E_2\to E)$ together with a morphism $l_q\colon E\to\T E$, called the \textbf{vertical lift}, fulfilling the following conditions:
\begin{enumerate}
\item $(l_q,z)\colon(q,z_q,s_q)\to(\T q,\T z_q,\T s_q)$ is an additive bundle morphism;

\item $(l_q,z_q)\colon(q,z_q,s_q)\to(p,z,s)$ is an additive bundle morphism;

\item The vertical lift is universal, that is, the following diagram:
\begin{equation*}
\begin{tikzcd}
{E_2} & {\T E} \\
M & {\T M}
\arrow["\iota_q", from=1-1, to=1-2]
\arrow["{\pi_1q}"', from=1-1, to=2-1]
\arrow["{z_q}"', from=2-1, to=2-2]
\arrow["{\T q}", from=1-2, to=2-2]
\end{tikzcd}
\end{equation*}
is a tangent pullback diagram, where:
\begin{align*}
&\iota_q\=(l_q\times_Mz)\T s_q
\end{align*}

\item The vertical lifts $l$ and $l_q$ are compatible:
\begin{equation*}
\begin{tikzcd}
E & {\T E} \\
{\T E} & {\T^2 E}
\arrow["{l_q}", from=1-1, to=1-2]
\arrow["{l_q}"', from=1-1, to=2-1]
\arrow["l"', from=2-1, to=2-2]
\arrow["{\T l_q}", from=1-2, to=2-2]
\end{tikzcd}
\end{equation*}
\end{enumerate}
\end{definition}

The interpretation of differential bundles as vector bundles in a tangent category acquires solidity in light of MacAdam's result, presented in~\cite[Theorem 4.2.7]{macadam:vector-bundles}, which proves that differential bundles in the tangent category of (connected) smooth manifolds are precisely vector bundles.
\par In \cite[Corollary~3.5]{cockett:differential-bundles}, Cockett and the first author also showed that given a point $x\colon\1\to M$, i.e., a morphism from the terminal object $\1$ of $\X$ to the object $M$, the local fibre $E_x$ of a differential bundle $q\colon E\to M$ over $x$, obtained by pulling back $q\colon E\to M$ along $x\colon\1\to M$ as follows:
\begin{equation*}
\begin{tikzcd}
{E_x} & E \\
\1 & M
\arrow["x"', from=2-1, to=2-2]
\arrow["q", from=1-2, to=2-2]
\arrow[from=1-1, to=2-1]
\arrow[from=1-1, to=1-2]
\arrow["\lrcorner"{anchor=center, pos=0.125}, draw=none, from=1-1, to=2-2]
\end{tikzcd}
\end{equation*}
is a differential object. 
\par The connection betwen differential bundles and differential objects is even deeper: under certain conditions, differential bundles are precisely the differential objects in the slice tangent category (see Section~\ref{subsection:slice-tangent-category}). However, to define the correct tangent structure on the slice category certain pullbacks are required to exist. Tangent display systems were initially introduced specifically to solve this issue. With the new notion of tangent display maps, we can instead require that differential bundles be tangent display maps.

\begin{definition}
\label{definition:display-differential-bundles}
A \textbf{display differential bundle} in a tangent category $(\X,\TT)$ consists of a differential bundle whose projection is a tangent display map.
\end{definition}

\begin{remark}
In \cite[Definition 4.21]{cockett:differential-bundles}, a display differential bundle in a display tangent category is a differential bundle whose projection belongs to the given display system.  Here we are defining a display differential bundle in an arbitrary tangent category as one in which the projection is a tangent display map (that is, part of the maximal display system).  
\end{remark}

\cite[Lemma~2.7]{cockett:differential-bundles} shows that the tangent pullback of a differential bundle along a morphism is still a differential bundle. This makes display differential bundles closed under tangent pullbacks. Furthermore, \cite[Lemma~2.5]{cockett:differential-bundles} shows that the tangent bundle functor sends differential bundles to differential bundles. These two results make the family of differential bundles of a tangent category into a tangent display system.

\begin{proposition}
\label{proposition:display-differential-bundles-form-tangent-display-system}
The family of display differential bundles in a tangent category $(\X,\TT)$ forms a tangent display system $\DBnd(\X,\TT)$.
\end{proposition}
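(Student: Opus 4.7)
The plan is to verify the two defining conditions of a tangent display system (stability under $\T$-pullbacks and stability under $\T$) by combining existing results about differential bundles with Theorem~\ref{theorem:T-display-maps-form-maximal-T-display-system} on tangent display maps. The work splits naturally into two pieces for each condition: one piece about the ``differential bundle'' structure, and one about the ``tangent display'' property.

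First I would handle stability under $\T$-pullbacks. Let $q\colon E\to M$ be a display differential bundle and let $f\colon N\to M$ be any morphism. Since $q$ is a tangent display map, the tangent pullback of $q$ along $f$ exists and gives a morphism $q'\colon E'\to N$. To show $q'$ is a display differential bundle I need two things: (i) $q'$ carries the structure of a differential bundle, which is exactly the content of~\cite[Lemma~2.7]{cockett:differential-bundles}, and (ii) $q'$ is again a tangent display map, which follows from Theorem~\ref{theorem:T-display-maps-form-maximal-T-display-system} since $\Dsply(\X,\TT)$ is stable under $\T$-pullbacks. Combined, these give that $q'$ is a display differential bundle, so $\DBnd(\X,\TT)$ is closed under $\T$-pullbacks, and moreover every display differential bundle admits all $\T$-pullbacks by definition.

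Next I would handle stability under $\T$. Let $q$ be a display differential bundle. By~\cite[Lemma~2.5]{cockett:differential-bundles}, $\T q$ carries the structure of a differential bundle. By Theorem~\ref{theorem:T-display-maps-form-maximal-T-display-system}, since $q$ is a tangent display map, so is $\T q$. Therefore $\T q$ is again a display differential bundle, establishing stability under $\T$.

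There is really no obstacle here; the proposition is a recombination of prior facts. The only point that requires a small amount of care is making sure one is using the tangent pullback in~\cite[Lemma~2.7]{cockett:differential-bundles} in the correct way: the hypothesis there is exactly that the pullback of $q$ along $f$ is preserved by $\T$, which is automatic from $q$ being a tangent display map. Once this is noted, the proof essentially reduces to citing Theorem~\ref{theorem:T-display-maps-form-maximal-T-display-system} together with the two cited lemmas from~\cite{cockett:differential-bundles}, and then observing that the intersection of $\Dsply(\X,\TT)$ with the class of differential bundles is closed under both operations.
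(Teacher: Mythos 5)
Your proposal is correct and matches the paper's own argument, which is given in the paragraph immediately preceding the proposition: closure under tangent pullbacks via \cite[Lemma~2.7]{cockett:differential-bundles}, stability under $\T$ via \cite[Lemma~2.5]{cockett:differential-bundles}, combined with Theorem~\ref{theorem:T-display-maps-form-maximal-T-display-system} for the tangent display part. Your write-up is in fact slightly more explicit than the paper's about separating the ``differential bundle structure'' and ``tangent display map'' halves of each verification, but the route is the same.
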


In a well-displayed tangent category every tangent bundle $p\colon\T M\to M$ is a tangent display map and therefore a display differential bundle. However, it is not obvious that an arbitrary differential bundle would be display as well. Let us introduce this concept.

\begin{definition}
\label{definition:fully-display-tangent-category}
A \textbf{fully displayed tangent category} is a tangent category in which every differential bundle is a display differential bundle. 
\end{definition}

In particular, every fully displayed tangent category is a well-displayed tangent category. In~\cite[Corollary~3.1.4]{macadam:vector-bundles}, MacAdam proved an important characterization of differential bundles: when the tangent category has negatives, every differential bundle is the retract of the pullback of a tangent bundle. We can employ this characterization to show that when the tangent bundles are tangent display maps, the tangent category has negatives, and the tangent display maps are stable under retracts, then every differential bundle automatically becomes a tangent display map. Let us first recall MacAdam's result.

\begin{lemma}
\label{lemma:differential-bundles-macadam}\cite[Corollary~3.1.4]{macadam:vector-bundles}
In a tangent category with negatives, a display differential bundle $q\colon E\to M$ is the retract of a pullback of the tangent bundle $p\colon\T E\to E$. Concretely, there exists a section-retraction pair $(\<l_q,q\>,k)\colon E\leftrightarrows VE$, where $VE\to M$ denotes the vertical bundle of $q$, which is the pullback of $\T q$ along the zero morphism $z\colon M\to\T M$, and $l_q\colon E\to\T E$ denotes the vertical lift of $q$.
\end{lemma}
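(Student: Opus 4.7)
The plan is to build the section $\langle l_q, q\rangle$ and the retraction $k$ directly from the axioms of a differential bundle, then verify the retraction identity via the universal property of the vertical lift, which identifies $VE$ with the pullback $E_2 = E \times_M E$.

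For the section, by the universal property of $VE$ as the pullback of $\T q$ along $z$, a morphism $E \to VE$ is determined by a pair $(f\colon E \to \T E,\ g\colon E \to M)$ with $f \cdot \T q = g \cdot z$. Taking $f = l_q$ and $g = q$, the required identity $l_q \cdot \T q = q \cdot z$ is exactly the projection-commutation for $(l_q, z)$ as an additive bundle morphism, supplied by axiom (i) of Definition~\ref{definition:differential-bundles}.

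For the retraction, axiom (iii) of Definition~\ref{definition:differential-bundles} states that the square with corners $E_2$, $\T E$, $M$, $\T M$, whose horizontal maps are $\iota_q$ and $z$ and whose vertical maps are $\pi_1 q$ and $\T q$, is a tangent pullback. Hence there is a canonical iso $\phi\colon VE \xrightarrow{\cong} E_2$, and I would set $k \= \phi \cdot \pi_1\colon VE \to E$. To verify $\langle l_q, q\rangle \cdot k = \id_E$, I would trace the section through $\phi$. I claim it corresponds to $\langle \id_E, z_q q\rangle\colon E \to E_2$: since $\T$ preserves the pullback $E_2$ along the tangent display map $q$, a direct computation gives
\[
\T \langle \id_E, z_q q\rangle(l_q(e)) = (l_q(e),\ z(z_q q(e))),
\]
using the naturality of $z$ and the identity $l_q \cdot \T q = q \cdot z$. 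Applying $\T s_q$ and invoking the additive-bundle unit axiom $s_q \cdot \langle \id_E, z_q q\rangle = \id_E$, combined with functoriality of $\T$, then yields $\iota_q \cdot \langle \id_E, z_q q\rangle = l_q$. Composing with $\pi_1$ returns $\id_E$, as required.

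The main obstacle I anticipate is the last computation, identifying $\iota_q \cdot \langle \id_E, z_q q\rangle$ with $l_q$, which requires careful use of the additive-bundle unit axiom together with the naturality properties of the tangent structure and the hypothesis that $q$ is a tangent display map (so that $\T$ preserves the pullback $E_2$). The negatives hypothesis enters, as in MacAdam's original argument in~\cite[Corollary~3.1.4]{macadam:vector-bundles}, to ensure the splitting is compatible with the linear structure of the bundle throughout.
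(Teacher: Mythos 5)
The paper does not actually prove this lemma --- it quotes it from MacAdam --- so I will assess your argument against what the lemma must deliver for its use in Theorem~\ref{theorem:fully-display-tangent-cats}. Your construction of the section--retraction pair $E\leftrightarrows VE$ is essentially correct: the section $\<l_q,q\>$ is well-defined by axiom (i), the universality of the vertical lift identifies $VE$ with $E_2$, the section corresponds under this isomorphism to $\<\id_E,qz_q\>$ (note the order: you want $qz_q\colon E\to M\to E$, not $z_qq$), and composing with the first projection retracts it; the verification that $\<\id_E,qz_q\>\iota_q=l_q$ via the unit law of the additive bundle $(\T q,\T z_q,\T s_q)$ and naturality of $z$ goes through as you sketch. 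Two smaller points: the preservation of $E_2$ by $\T$ is part of the definition of a differential bundle (the additive bundle structure presupposes $E_2$ as a tangent pullback), not a consequence of $q$ being a tangent display map --- and you should not lean on the display hypothesis here, since Theorem~\ref{theorem:fully-display-tangent-cats} applies the lemma precisely in order to deduce that $q$ is display (the word ``display'' in the statement is surely a slip).

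The genuine gap is that your argument never touches the first, and for the paper essential, assertion: that $q$ is a retract of a pullback of the \emph{tangent bundle} $p\colon\T E\to E$. The object you retract onto is a pullback of $\T q$ along $z\colon M\to\T M$, and nothing is known about $\T q$ being a tangent display map at this stage, so Theorem~\ref{theorem:fully-display-tangent-cats} cannot be run with your version of the lemma --- the argument would be circular. What MacAdam actually produces, and what the theorem needs, is a retraction of $E$ off $z_q^\*\T E\to M$, the pullback of $p\colon\T E\to E$ along the zero section $z_q\colon M\to E$. The section is again essentially $\<l_q,q\>$ (note $l_qp=qz_q$ by axiom (ii), so $l_q$ lands over the zero section), but the retraction must first project a tangent vector $v$ based at the zero section onto its vertical part, $v\mapsto v-\T z_q(\T q(v))$, before the universality of the lift can be applied --- and this subtraction is exactly where the negatives hypothesis enters. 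Your closing sentence gestures at negatives, but your proof as written never invokes them; that should have been a warning sign that a step was missing.
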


\begin{theorem}
\label{theorem:fully-display-tangent-cats}
A retractive well-displayed tangent category with negatives is a fully displayed tangent category.
\begin{proof}
By definition, tangent display maps of a retractive tangent category $(\X,\TT)$ form a retractive tangent display system. Therefore, $\Dsply(\X,\TT)$ is stable under retracts and under pullbacks. By Lemma~\ref{lemma:differential-bundles-macadam}, differential bundles of a tangent category with negatives are retracts of pullbacks of tangent bundles. Finally, by definition, the tangent bundles of a well-displayed tangent category are tangent display maps. Therefore, every differential bundle is display, meaning the tangent category is fully displayed.
\end{proof}
\end{theorem}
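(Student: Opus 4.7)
The plan is to bundle the three hypotheses (retractive, well-displayed, and negatives) through Lemma~\ref{lemma:differential-bundles-macadam} to show that every differential bundle lies in $\Dsply(\X,\TT)$.

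Starting from an arbitrary differential bundle $q\colon E\to M$, I would apply Lemma~\ref{lemma:differential-bundles-macadam}. Since the tangent category has negatives, this lemma provides a section-retraction pair $(\langle l_q,q\rangle,k)\colon E\leftrightarrows VE$ which, in the sense of Definition~\ref{definition:retractivity}, exhibits $q = \langle l_q,q\rangle\cdot\pi$ as a retract of the canonical vertical bundle projection $\pi\colon VE\to M$, where $VE$ is constructed as a pullback involving a tangent bundle projection $p\colon\T E\to E$.

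Next I would argue that $\pi$ itself is a tangent display map. The well-displayed hypothesis puts $p$ directly into $\Dsply(\X,\TT)$, and Theorem~\ref{theorem:T-display-maps-form-maximal-T-display-system} tells us that $\Dsply(\X,\TT)$ is stable under pullbacks, so the pullback $\pi$ of $p$ belongs to $\Dsply(\X,\TT)$. The retractive hypothesis then says that $\Dsply(\X,\TT)$ is closed under retracts (Definitions~\ref{definition:retractivity} and~\ref{definition:retractive-tangent-categories}), and hence $q$, as a retract of $\pi$, is a tangent display map. This makes $q$ a display differential bundle, and since $q$ was arbitrary, the category is fully displayed in the sense of Definition~\ref{definition:fully-display-tangent-category}.

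The delicate step I would worry about is the application of Lemma~\ref{lemma:differential-bundles-macadam}: one must check that the vertical bundle $VE$ is indeed presented as a pullback of an honest tangent bundle projection $p$ (rather than only of $\T q$, whose tangent display status is not yet known at this stage of the argument), so that the well-displayed hypothesis feeds directly into Theorem~\ref{theorem:T-display-maps-form-maximal-T-display-system}. Once this bookkeeping is settled, the argument collapses to a one-line composition of the three closure properties (pullback stability, retract stability, and tangent bundles being tangent display) enjoyed by $\Dsply(\X,\TT)$.
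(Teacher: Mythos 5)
Your argument is correct and follows essentially the same route as the paper's proof: both combine Lemma~\ref{lemma:differential-bundles-macadam} (with negatives, every differential bundle is a retract of a pullback of a tangent bundle) with the pullback-stability of $\Dsply(\X,\TT)$ from Theorem~\ref{theorem:T-display-maps-form-maximal-T-display-system}, the well-displayed hypothesis to put $p$ in $\Dsply(\X,\TT)$, and the retractive hypothesis to close under the retract. The subtlety you flag --- that the retract must be taken over a pullback of an honest tangent projection $p\colon\T E\to E$ rather than of $\T q$, whose display status is not yet known --- is a genuine point, but it is exactly what the first sentence of Lemma~\ref{lemma:differential-bundles-macadam} asserts, so both you and the paper discharge it by citing MacAdam's result rather than reproving it.
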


Thanks to Corollary~\ref{corollary:cauchy-retractive-display}, every Cauchy complete tangent category is retractive, so we also have the following result.

\begin{corollary}
\label{corollary:fully-display-tangent-cats}
A Cauchy complete well-displayed tangent category with negatives is a fully displayed tangent category with negatives.
\end{corollary}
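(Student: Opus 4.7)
The plan is to observe that this corollary follows almost immediately from combining two results already established in the paper: Theorem~\ref{theorem:-retractive-tangent-display-systems} (which reduces retractivity to a property of idempotents) and Theorem~\ref{theorem:fully-display-tangent-cats} (which says that retractive well-displayed tangent categories with negatives are fully displayed).

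First I would invoke Theorem~\ref{theorem:-retractive-tangent-display-systems}: since the tangent category $(\X,\TT)$ is Cauchy complete, every idempotent of $\X$ splits, so by Lemma~\ref{lemma:split-idempotents} split idempotents are closed under pullbacks, and hence $(\X,\TT)$ is retractive. Then, since by hypothesis $(\X,\TT)$ is also well-displayed and has negatives, the assumptions of Theorem~\ref{theorem:fully-display-tangent-cats} are all satisfied, and we conclude that $(\X,\TT)$ is fully displayed. Negatives are preserved trivially since they are part of the assumed structure.

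There is no real obstacle here; the work has been done in the preceding results, and this corollary simply packages them together. The only mild subtlety worth flagging is to make sure the reader sees that Cauchy completeness is what triggers Theorem~\ref{theorem:-retractive-tangent-display-systems} (via Lemma~\ref{lemma:split-idempotents}), rather than directly Corollary~\ref{corollary:cauchy-retractive-display}, which is the analogous statement for the plain category $\X$ rather than the tangent structure.
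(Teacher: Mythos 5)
Your proposal is correct and follows essentially the same route as the paper: the paper's (one-line) justification is that Cauchy completeness gives retractivity, after which Theorem~\ref{theorem:fully-display-tangent-cats} applies directly. Your remark about citing Theorem~\ref{theorem:-retractive-tangent-display-systems} (via Lemma~\ref{lemma:split-idempotents}) rather than Corollary~\ref{corollary:cauchy-retractive-display} is well taken --- the paper's prose in fact points to the latter, which concerns only the plain display system of $\X$, so your version of the citation is the more precise one.
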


Finally, putting together that the Cauchy completion of a well-displayed tangent category with negatives is still a well-displayed tangent category with negatives (Theorem~\ref{theorem:cauchy-completion-tangent-cats} and Corollary~\ref{corollary:cauchy-completion-tangent-cats}) and using Corollary~\ref{corollary:fully-display-tangent-cats}, we also obtain the following result.

\begin{corollary}
\label{corollary:fully-display-embedding}
Every well-displayed tangent category $(\X,\TT)$ with negatives is embedded in a fully displayed tangent category $\Split(\X,\TT)$ with negatives.
\end{corollary}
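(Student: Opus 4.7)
The plan is to simply chain together the results that the excerpt has already established for Cauchy completion and for fully displayed tangent categories. Starting with a well-displayed tangent category $(\X,\TT)$ with negatives, I would pass to its Cauchy completion $\Split(\X,\TT)$, and then verify that all the hypotheses of Corollary~\ref{corollary:fully-display-tangent-cats} hold in $\Split(\X,\TT)$.

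First, I would invoke Theorem~\ref{theorem:cauchy-completion-tangent-cats} (together with the mention that negatives transport through the construction) to conclude that $\Split(\X,\TT)$ is again a well-displayed tangent category with negatives, and that the canonical fully faithful functor $\iota\colon\X\to\Split(\X)$ strictly preserves the tangent structure and sends tangent display maps to tangent display maps. This gives the required embedding. Second, by construction $\Split(\X)$ is Cauchy complete (every idempotent splits there), so Corollary~\ref{corollary:cauchy-retractive-display}, or more directly Theorem~\ref{theorem:-retractive-tangent-display-systems}, ensures that $\Split(\X,\TT)$ is retractive. Third, since $\Split(\X,\TT)$ is a Cauchy complete well-displayed tangent category with negatives, Corollary~\ref{corollary:fully-display-tangent-cats} applies and yields that it is fully displayed.

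There is no genuine obstacle here since the hard work has been done in the previous subsections; the only thing to double-check is that ``embedded'' is understood in the sense already provided by Corollary~\ref{corollary:cauchy-completion-tangent-cats}, namely as a strict embedding of tangent categories preserving the tangent display maps. The conclusion then reads: $(\X,\TT)$ embeds strictly, via $\iota$, into the fully displayed tangent category $\Split(\X,\TT)$ with negatives, which is exactly the statement of the corollary.
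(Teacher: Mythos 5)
Your proposal is correct and follows exactly the same route as the paper: apply Theorem~\ref{theorem:cauchy-completion-tangent-cats} to see that $\Split(\X,\TT)$ is a well-displayed tangent category with negatives into which $(\X,\TT)$ strictly embeds, note that it is Cauchy complete, and then conclude via Corollary~\ref{corollary:fully-display-tangent-cats} that it is fully displayed. No discrepancies to report.
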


\subsection{Tangent fibrations}
\label{subsection:tangent-fibrations}
One of the motivations for introducing tangent display systems in \cite{cockett:differential-bundles} was to organize differential bundles of a given tangent category into a fibration, satisfying some compatibility conditions with the tangent bundle functor. \cite[Section~5]{cockett:differential-bundles} showed that each tangent category equipped with a choice of a tangent display system $\Dsply$ defines a fibration whose fibre over an object $M$ is the category of the maps of $\Dsply$ whose codomain is $M$. Furthermore, such a fibration is compatible with the tangent bundle functors.
\par Cockett and the first author distilled the property of this class of fibrations into the notion of a tangent fibration. This section is dedicated to recalling this definition and discussing the relationship between tangent display maps and the corresponding tangent fibration.
\par Let us start by briefly recalling that a fibration $\Pi\colon\X'\to\X$ between two categories consists of a functor for which every morphism $f\colon M\to M'$ of the base category $\X$ admits a cartesian lift $\varphi_f^{E'}\colon f^\*E'\to E'$ for every object $E'$ on the fibre over $M'$, that is $\Pi(E')=M'$. Concretely, a cartesian morphism $\varphi\colon E'\to E''$ is a morphism of the total category $\X'$ such that for any morphism $\psi\colon E\to E''$ of $\X'$ and any morphism $g\colon\Pi(E)\to\Pi(E')$ making the following diagram commutes:
\begin{equation*}
\begin{tikzcd}
& {\Pi(E')} \\
{\Pi(E)} && {\Pi(E'')}
\arrow["{\Pi(\varphi)}", from=1-2, to=2-3]
\arrow["g", from=2-1, to=1-2]
\arrow["{\Pi(\psi)}"', from=2-1, to=2-3]
\end{tikzcd}
\end{equation*}
there exists a unique morphism $\xi\colon E\to E'$ of $\X'$ such that $\Pi(\xi)=g$ and the following diagram commutes:
\begin{equation*}
\begin{tikzcd}
& {E'} \\
E && {E''}
\arrow["\varphi", from=1-2, to=2-3]
\arrow["\xi", dashed, from=2-1, to=1-2]
\arrow["\psi"', from=2-1, to=2-3]
\end{tikzcd}
\end{equation*}
A fibration $\Pi\colon\X'\to\X$ equipped with a choice of a cartesian lift $\varphi_f^{E'}\colon f^\*E'\to E'$ for each morphism $f\colon M\to M'$ of $\X$ and each object $E'$ of the fibre over $M'$ is known as a \textbf{cloven fibration} and the choice $(f,E')\mapsto\varphi_f^{E'}$ is called a \textbf{cleavage} of the fibration.
\par In the following, every fibration is assumed to be cloven and we denote by $\varphi_f^{E'}$ the cartesian lift of $f\colon M\to M'$ along $E'\in\Pi^{-1}(M')$ defined by the cleavage. When $E'$ is clear from the context we omit the superscript $E'$. Moreover, we also denote a fibration $\Pi\colon\X'\to\X$ as a triple $(\X,\X',\Pi)$.\newline
\par Suppose $\Pi_\o\colon\X_\o'\to\X_\o$ and $\Pi_\b\colon\X_\b'\to\X_\b$ are two fibrations and $F'\colon\X'_\o\to\X'_\b$ and $F\colon\X_\o\to\X_\b$ are two functors such that $\Pi_\b\o F'=F\o\Pi_\o$. Given a morphism $f\colon M\to M'$ of $\X_\o$, one can lift $Ff\colon FM\to FM'$ to $\X_\b'$ using the cleavage of $\Pi_\b$ and obtain a cartesian lift $\varphi_{Ff}\colon(Ff)^\*(F'E')\to F'E'$ of $Ff$ over $F'E'\in\X_\b'$. \par However, the cleavage of $\Pi_\o$ also defines a cartesian lift $\varphi_f\colon f^\*E'\to E'$ of $f$ over $E'$. Therefore, since $\Pi_\b F'\varphi_f=F\Pi_\o\varphi_f=Ff$, $F'\varphi_f$ is a lift of $Ff$. By employing the universal property of the cartesian lift $\varphi_{Ff}$, there exists a unique morphism $\xi_f\colon F'(f^\*E')\dashrightarrow(Ff)^\*(F'E')$ making the following diagram commute:
\begin{equation*}
\begin{tikzcd}
& {(Ff)^\*(F'E')} \\
{F'(f^\*E')} && {F'E'}
\arrow["{\varphi_{Ff}}", from=1-2, to=2-3]
\arrow["{\xi_f}", dashed, from=2-1, to=1-2]
\arrow["{F'\varphi_f}"', from=2-1, to=2-3]
\end{tikzcd}
\end{equation*}
A morphism of fibrations $(F,F')\colon(\X_\o,\X_\o',\Pi_\o)\to(\X_\b,\X_\b',\Pi_\b)$ consists of a pair of functors $F\colon\X_\o\to\X_\b$ and $F'\colon\X_\o'\to\X_\b'$ which commute strictly with the two fibrations, that is $\Pi_\b\o F'=F\o\Pi_\o$ and which preserve the cartesian lifts, meaning the unique morphism $\xi_f\colon F'(f^\*E')\dashrightarrow(Ff)^\*(F'E')$ induced by the universality of $\varphi_{Ff}$ is an isomorphism for each morphism $f\colon M\to M'$ of $\X_\o$ and every object $E'\in\Pi_\o^{-1}(M')$.

\begin{definition}
\label{definition:tangent-fibration}
Given two tangent categories $(\X',\TT')$ and $(\X,\TT)$, a \textbf{tangent fibration} $\Pi\colon(\X',\TT')\to(\X,\TT)$ consists of a fibration $\Pi\colon\X'\to\X$ whose underlying functor is a strict tangent morphism and such that the tangent bundle functors preserve the cartesian lifts, meaning $(\T,\T')\colon(\X,\X',\Pi)\to(\X,\X',\Pi)$ is a morphism of fibrations.
\end{definition}

Cockett and the first author showed that a tangent display system $\Dsply$ of a tangent category $(\X,\TT)$ gives rise to a tangent category $(\Dsply,\TT')$ whose tangent bundle functor sends each map $q\colon E\to M$ of $\Dsply$ to $\T q\colon\T E\to\T M$. Furthermore, the functor which sends each map $q\colon E\to M$ to its codomain $M$ defines a tangent fibration $(\Dsply,\TT')\to(\X,\TT)$. Here, we recall this result and apply it to the setting of tangent display maps.

\begin{proposition}
\label{proposition:codomain-tangent-fibration}
The functor $\Pi\colon\Dsply\to\X$ which sends each map $q\colon E\to M$ of a tangent display system $\Dsply$ of a tangent category $(\X,\TT)$ to its codomain $M$ defines a tangent fibration $\Pi\colon(\Dsply,\TT')\to(\X,\TT)$. In particular, the maximal tangent display system $\Dsply(\X,\TT)$ of the tangent display maps of $(\X,\TT)$ defines a tangent fibration $\Pi\colon\Dsply(\X,\TT)\to(\X,\TT)$.
\end{proposition}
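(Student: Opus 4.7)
The plan is to verify three things in sequence: (i) $\Pi\colon\Dsply\to\X$ is a cloven fibration; (ii) the assignment $q\mapsto\T q$ extends to a tangent structure $\TT'$ on $\Dsply$ that makes $\Pi$ a strict tangent morphism; and (iii) $(\T,\T')$ preserves the chosen cartesian lifts, so that it is a morphism of fibrations in the sense recalled before the statement.

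For (i), the category $\Dsply$ has as objects the morphisms of the family and as morphisms the commutative squares between them. Given $f\colon M\to M'$ in $\X$ and $q'\colon E'\to M'$ in $\Dsply$, the stability of $\Dsply$ under pullbacks guarantees both that the pullback of $q'$ along $f$ exists in $\X$ and that the pulled-back map $f^\*q'\colon f^\*E'\to M$ lies in $\Dsply$. The resulting pullback square is a morphism $\varphi_f^{q'}\colon f^\*q'\to q'$ of $\Dsply$ lying over $f$, and the standard pullback-pasting argument (Lemma~\ref{lemma:pullback-lemma} in its plain, non-$\T$ form) shows that it is cartesian. Making such a choice of pullbacks once and for all provides a cleavage.

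For (ii), stability of $\Dsply$ under $\T$ lets us put $\T'q\=\T q$ on objects and extend $\T'$ to square morphisms by applying $\T$ to both legs. Each structural natural transformation of $\TT$ (namely $p$, $z$, $s$, $\ell$, $c$, and negation when present) induces, via its naturality square at $q$, a morphism of $\Dsply$ between the relevant objects, and these assemble into the corresponding transformations for $\TT'$. The fibre-product objects $\T'_nq$ are computed from the $n$-fold pullbacks of $p\colon\T E\to E$ in $\X$, which exist and are preserved by $\T^m$ because $p$ is a tangent display map and $\Dsply$ is closed under $\T$-pullbacks; hence these fibre products lie in $\Dsply$. Every piece of data is obtained by applying $\T$ or a pullback construction pointwise, so $\Pi$ strictly preserves all of it and is a strict tangent morphism.

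For (iii), the cartesian lift $\varphi_f^{q'}$ is, by construction, a pullback square in $\X$, and because $\Dsply$ is a tangent display system this pullback is a $\T$-pullback. Applying $\T$ therefore yields another pullback square in $\X$, which says exactly that the canonical comparison $\xi_f\colon\T'(f^\*q')\to(\T f)^\*(\T'q')$ is an isomorphism. Hence $(\T,\T')$ is a morphism of fibrations and $\Pi\colon(\Dsply,\TT')\to(\X,\TT)$ is a tangent fibration. The last clause of the proposition is then immediate on taking $\Dsply=\Dsply(\X,\TT)$ via Theorem~\ref{theorem:T-display-maps-form-maximal-T-display-system}. The only real obstacle is book-keeping: each clause in the definition of a tangent fibration matches exactly one clause in the definition of a tangent display system (closure under $\T$, closure under pullbacks, and preservation of those pullbacks by every $\T^n$), so once this correspondence is pinned down the verifications reduce to routine diagram chases.
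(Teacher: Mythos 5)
Your overall plan---first check that $\Pi$ is a cloven fibration, then put the componentwise tangent structure $\TT'$ on the total category, then verify that $\T$ preserves the chosen cartesian lifts---is the right one, and steps (i) and (iii) are correct (in (iii) the fact that the cleavage consists of $\T$-pullbacks is indeed exactly what makes the comparison $\xi_f$ invertible). For context, the paper does not reprove this statement: it recalls it from \cite[Section~5]{cockett:differential-bundles} and only adds that Theorem~\ref{theorem:T-display-maps-form-maximal-T-display-system} lets one instantiate it at the maximal system, which your last sentence handles correctly.

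The genuine gap is in step (ii), at the one point of the construction that is not routine. First, your justification for the fibre powers appeals to ``$p$ is a tangent display map'': that is the \emph{well-displayed} hypothesis of Definition~\ref{definition:display-tangent-categories}, which is not assumed here; the existence of $\T_nE$ and $\T_nM$ as $\T$-limits is part of the tangent category axioms on $(\X,\TT)$ and has nothing to do with $p$ belonging to $\Dsply$. Second, and more seriously, the conclusion ``hence these fibre products lie in $\Dsply$'' does not follow from anything you have said: what must lie in $\Dsply$ is not the object $\T_nE$ of $\X$ but the morphism $\T_nq\colon\T_nE\to\T_nM$, since that is the candidate for $\T'_nq$ in the total category. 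A tangent display system is only assumed closed under $\T$ and under pullback along arbitrary maps; $\T_nq$ is neither $\T$ of a member of $\Dsply$ nor obviously a pullback of one, so its membership in $\Dsply$ requires an argument. Because $\Dsply$ is a full subcategory of the arrow category, if $\T_nq$ failed to lie in $\Dsply$ the componentwise wide pullback of the projection $\T q\to q$ would not be a limit in $\Dsply$ at all, and $\TT'$ would not be a tangent structure; the same issue recurs for the domain of the vertical-lift universality square. Establishing these membership and limit claims is precisely the content of the proof in \cite[Section~5]{cockett:differential-bundles}, and your summary ``every piece of data is obtained by applying $\T$ or a pullback construction pointwise'' is not true of the functors $\T_n$. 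You should either supply that argument or explicitly defer to the cited result, as the paper does.
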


By applying Proposition~\ref{proposition:codomain-tangent-fibration} to the tangent display system $\DBnd(\X,\TT)$ of display differential bundles (see Proposition~\ref{proposition:display-differential-bundles-form-tangent-display-system}) one also defines a tangent fibration $\DBnd(\X,\TT)\to(\X,\TT)$. Similarly, one can also apply the same construction to the tangent display systems $\Smer(\X,\TT)$ and $\Etal(\X,\TT)$ of display submersions and display \'etale maps of $(\X,\TT)$ to get two additional tangent fibrations $\Smer(\X,\TT)\to(\X,\TT)$ and $\Etal(\X,\TT)\to(\X,\TT)$.

\subsection{The universal property of slicing}
\label{subsection:slice-tangent-category}
The slice category $\X/M$ of a category $\X$ over an object $M$ of $\X$ is the category whose objects $q^E_M$ are morphisms of $\X$ of the form $q\colon E\to M$ and whose morphisms $f\colon q^E_M\to {q'}^{E'}_M$ are morphisms $f\colon E\to E'$ of $\X$ for which $fq'=q$. When the base category $\X$ is equipped with a tangent structure, one would like to lift the tangent structure $\TT$ of $\X$ to the slice category $\X/M$.
\par One approach is to define the tangent bundle functor $\T/M\colon\X/M\to\X/M$ as the functor which sends a bundle $q^E_M$ to:
\begin{align*}
\T q/M\colon\T E\xrightarrow{\T q}\T M\xrightarrow{p}M
\end{align*}
The structural natural transformations of this tangent structure are precisely the same as the ones of the tangent structure $\TT$. We refer to this tangent structure as the ``\textbf{trivial slice tangent structure}''.
\par A second, and more interesting construction, is the ``non-trivial'' slice tangent category. This plays an important role in tangent category theory. In particular, (display) differential bundles are equivalent to differential objects in the non-trivial slice tangent category. For its importance, in the following, we will refer to this as \emph{the} slice tangent category.
\par The idea is to define the tangent bundle of $q\colon E\to M$ as the pullback of $\T q$ along the zero morphism, that is the bundle $q\^M\colon\T\^ME\to M$:
\begin{equation}
\label{equation:sliceability}
\begin{tikzcd}
{\T\^ME} & {\T E} \\
M & {\T M}
\arrow["{q\^M}"', from=1-1, to=2-1]
\arrow["{\T q}", from=1-2, to=2-2]
\arrow["z"', from=2-1, to=2-2]
\arrow["\iota", from=1-1, to=1-2]
\arrow["\lrcorner"{anchor=center, pos=0.125}, draw=none, from=1-1, to=2-2]
\end{tikzcd}
\end{equation}
Notice that $\T\^-$ is a functor on the slice category but not on the base category.
\par Unfortunately, in an arbitrary tangent category there is no need for the pullback diagram of Equation~\eqref{equation:sliceability} to exist for an arbitrary morphism $q\colon E\to M$. Furthermore, in order to properly define the desired tangent structure on the slice category, this pullback should also be a tangent pullback and the pulled-back morphism should also admit the corresponding tangent pullback\footnote{The definition of these two tangent structures on the slice is due to Rosick\`{y} \cite{rosicky:tangent-cats}. However, no mention there is made of the pullback assumptions needed to define the second slice tangent structure.}.
\par One approach is to focus only on objects $M$ of the category $\X$ for which every $q\in\X/M$ admits all tangent pullbacks. In~\cite{lanfranchi:differential-bundles-operadic-affine-schemes}, the second author employed this approach. However, this has an important disadvantage: in general strong tangent morphisms do not preserve this property on the objects. Consequently, the slice tangent category so defined over an object $M$ cannot be obtained as the fibre over $M$ of a tangent fibration.
\par This obstruction led us to rethink the definition of the slice tangent category. This section is dedicated to presenting this new approach.

\begin{lemma}
\label{lemma:display-maps-sliceable}
If $q\colon E\to M$ is a morphism of a tangent display system $\Dsply$ in a tangent category $(\X,\TT)$, then the tangent pullback of Equation~\eqref{equation:sliceability} is well-defined. Moreover, for each $q$ in the tangent display system, $q\^M$ is a morphism of the same tangent display system.
\begin{proof}
Since, by definition, a tangent display system is stable under the tangent bundle functor, for every bundle $q$ of the tangent display system, $\T q$ is also part of the tangent display system. Moreover, since a tangent display system is also stable under pullbacks, then the pullback of $\T q$ along the zero morphism is well-defined and it defines a new morphism of the tangent display system. Thus, also $\T\^Mq=q\^M$ is part of the tangent display system. Finally, since every bundle of a tangent display system is necessarily a tangent display map, the pullback diagram is a tangent pullback.
\end{proof}
\end{lemma}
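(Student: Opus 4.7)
The plan is to unpack the definition of a tangent display system and observe that the two claims of the lemma correspond exactly to the two stability properties that define such a system. The diagram in Equation~\eqref{equation:sliceability} is precisely the pullback of $\T q$ along the zero morphism $z\colon M\to\T M$, so all I need is (i) existence of this pullback as a $\T$-pullback, and (ii) membership of the pulled-back leg $q\^M\colon\T\^ME\to M$ in $\Dsply$.

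First I would use that, by Definition~\ref{definition:T-limits}, a tangent display system is stable under $\T$, so from $q\in\Dsply$ I immediately obtain $\T q\in\Dsply$. Next, by the ``admits all $\T$-pullbacks'' clause in the definition of stability under $\T$-pullbacks, the pullback of $\T q$ along any morphism of $\X$ with codomain $\T M$---in particular along $z\colon M\to\T M$---exists and is automatically a tangent pullback. This establishes the first half of the lemma.

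For the second half, the closure of $\Dsply$ under $\T$-pullbacks directly gives that the pulled-back projection $q\^M=\T\^Mq\colon\T\^ME\to M$ lies in $\Dsply$, since $\Dsply$ is assumed closed under $\T$-pullbacks and the pullback just constructed is a $\T$-pullback of the $\Dsply$-morphism $\T q$.

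There is no real obstacle here: the statement is essentially a direct application of the axioms of a tangent display system to the specific morphism $z\colon M\to\T M$. The only point worth flagging is that both the existence and the preservation by $\T$ must be invoked simultaneously, rather than the weaker notion of an ordinary display system, which is precisely why the notion of tangent display system (and, a fortiori, tangent display map via Theorem~\ref{theorem:T-display-maps-form-maximal-T-display-system}) is the right one for making the slice construction of Equation~\eqref{equation:sliceability} go through.
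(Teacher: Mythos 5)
Your proposal is correct and follows essentially the same route as the paper: it unpacks stability under $\T$ to get $\T q\in\Dsply$, then invokes the ``admits all $\T$-pullbacks'' and ``closed under $\T$-pullbacks'' clauses for the morphism $z\colon M\to\T M$ to obtain existence, preservation by $\T$, and membership of $q\^M$ in $\Dsply$. No gaps.
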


In~\cite[Theorem~5.3]{cockett:differential-bundles}, Cockett and the first author showed that the fibres of a tangent fibration come equipped with a tangent structure, strongly preserved by the substitution functors between the fibres. In Proposition~\ref{proposition:codomain-tangent-fibration}, we employed another result of Cockett and the first author to define a tangent fibration $\Dsply(\X,\TT)\to(\X,\TT)$ for each tangent category $(\X,\TT)$.
\par The \textbf{display slice tangent category} $\Dsply(\X,\TT;M)$ of $(\X,\TT)$ over an object $M$ is the tangent category given by the fibre of $\Dsply(\X,\TT)\to(\X,\TT)$ over $M$. Explicitly, it is defined as follows:
\begin{description}
\item[objects] The objects are tangent display maps $q^E_M\colon E\to M$ whose codomain is the object $M$;

\item[morphisms] The morphisms $f\colon q^E_M\to {q'}^{E'}_M$ are morphisms $f\colon E\to E'$ of $\X$ such that $f{q'}^{E'}_M=q^E_M$;

\item[tangent bundle functor] The tangent bundle functor $\T\^M\colon\Dsply(\X,\TT;M)\to\Dsply(\X,\TT;M)$ 
sends a tangent display map $q\colon E\to M$ to $q\^M$, defined by the pullback diagram~\eqref{equation:sliceability}. Moreover, $\T\^M$ sends a morphism $f\colon q^E_M\to{q'}^{E'}_M$ to the unique morphism $f\^M\colon q\^M\to{q'}\^M$, induced by the universality of the pullback diagram~\eqref{equation:sliceability}:
\begin{equation*}
\begin{tikzcd}
&& {\T\^ME'} \\
{\T\^ME} && M & {\T E'} \\
M & {\T E} && {\T M} \\
& {\T M}
\arrow["{{q'}\^M}"', from=1-3, to=2-3]
\arrow["\iota", from=1-3, to=2-4]
\arrow["\lrcorner"{anchor=center, pos=0.125}, draw=none, from=1-3, to=3-4]
\arrow["{f\^M}"{pos=0.4}, dashed, from=2-1, to=1-3]
\arrow["{q\^M}"', from=2-1, to=3-1]
\arrow["\iota", from=2-1, to=3-2]
\arrow["\lrcorner"{anchor=center, pos=0.125}, draw=none, from=2-1, to=4-2]
\arrow["z"', from=2-3, to=3-4]
\arrow["{\T q'}", from=2-4, to=3-4]
\arrow[Rightarrow, no head, from=3-1, to=2-3]
\arrow["z"', from=3-1, to=4-2]
\arrow["{\T f}"'{pos=0.4}, from=3-2, to=2-4]
\arrow["{\T q}", from=3-2, to=4-2]
\arrow[Rightarrow, no head, from=4-2, to=3-4]
\end{tikzcd}
\end{equation*}

\item[projection] The projection $p\^M\colon\T\^M\Rightarrow\id_{\X/M}$ is induced by the natural transformation:
\begin{align*}
&\T\^ME\xrightarrow{\iota}\T E\xrightarrow{p}E
\end{align*}
for any object $q^E_M$;

\item[zero morphism] The zero morphism $z\^M\colon\id_{\X/M}\Rightarrow\T\^M$ is induced by the natural transformation defined by the universality of the pullback diagram~\eqref{equation:sliceability}:
\begin{equation*}
\begin{tikzcd}
E && {\T E} \\
& {\T\^ME} & {\T E} \\
M & M & {\T M}
\arrow["{q\^M}"', from=2-2, to=3-2]
\arrow["{\T q}", from=2-3, to=3-3]
\arrow["z"', from=3-2, to=3-3]
\arrow["\iota", from=2-2, to=2-3]
\arrow["\lrcorner"{anchor=center, pos=0.125}, draw=none, from=2-2, to=3-3]
\arrow["q"', from=1-1, to=3-1]
\arrow[Rightarrow, no head, from=3-1, to=3-2]
\arrow["z", from=1-1, to=1-3]
\arrow[Rightarrow, no head, from=1-3, to=2-3]
\arrow["{z\^M}", dashed, from=1-1, to=2-2]
\end{tikzcd}
\end{equation*}

\item[$n$-fold pullback] The $n$-fold pullback $\T\^M_n$ of the projection $p\^M$ along itself is given by the pullback diagram:
\begin{equation*}
\begin{tikzcd}
{\T\^M_nE} & {\T_nE} \\
M & {\T_nM}
\arrow["{\<\iota\,\iota\>}", from=1-1, to=1-2]
\arrow["{q\^M_n}"', from=1-1, to=2-1]
\arrow["\lrcorner"{anchor=center, pos=0.125}, draw=none, from=1-1, to=2-2]
\arrow["{\T_nq}", from=1-2, to=2-2]
\arrow["{\<z\,z\>}"', from=2-1, to=2-2]
\end{tikzcd}
\end{equation*}
meaning that:
\begin{align*}
&\T\^M_nq^E_M=q\^M_n
\end{align*}
and the $k$-th projection $\pi\^M_k\colon\T\^M_n\Rightarrow\T\^M$ is given by the natural transformation induced by the universality of the following diagram:
\begin{equation*}
\begin{tikzcd}
&& {\T\^ME} \\
{E_n\^M} && M & {\T E} \\
M & {\T_nE} && {\T M} \\
& {\T_nM}
\arrow["{q_n\^M}"', from=2-1, to=3-1]
\arrow["{\T_nq}", from=3-2, to=4-2]
\arrow["{\<z\,z\>}"', from=3-1, to=4-2]
\arrow["{\<\iota\,\iota\>}", from=2-1, to=3-2]
\arrow["\lrcorner"{anchor=center, pos=0.125}, draw=none, from=2-1, to=4-2]
\arrow["{\pi_k}"', from=4-2, to=3-4]
\arrow["{\pi_k}"'{pos=0.4}, from=3-2, to=2-4]
\arrow["{\T q}", from=2-4, to=3-4]
\arrow["\iota", from=1-3, to=2-4]
\arrow["{\pi\^M_k}"{pos=0.4}, dashed, from=2-1, to=1-3]
\arrow["{q\^M}"', from=1-3, to=2-3]
\arrow["z"', from=2-3, to=3-4]
\arrow[Rightarrow, no head, from=3-1, to=2-3]
\arrow["\lrcorner"{anchor=center, pos=0.125}, draw=none, from=1-3, to=3-4]
\end{tikzcd}
\end{equation*}

\item[sum morphism] The sum morphism $s\^M\colon\T\^M_2\Rightarrow\T\^M$ is induced by the natural transformation defined by the universality of the pullback diagram~\eqref{equation:sliceability}:
\begin{equation*}
\begin{tikzcd}
{\T\^ME_2} && {\T_2E} \\
& {\T\^ME} & {\T E} \\
M & M & {\T M}
\arrow["{\T q}", from=2-3, to=3-3]
\arrow["z"', from=3-2, to=3-3]
\arrow["s", from=1-3, to=2-3]
\arrow["{\<\iota,\iota\>}", from=1-1, to=1-3]
\arrow["{q\^M_2}"', from=1-1, to=3-1]
\arrow["{q\^M}"', from=2-2, to=3-2]
\arrow[Rightarrow, no head, from=3-1, to=3-2]
\arrow["\iota", from=2-2, to=2-3]
\arrow["{s\^M}", dashed, from=1-1, to=2-2]
\arrow["\lrcorner"{anchor=center, pos=0.125}, draw=none, from=2-2, to=3-3]
\end{tikzcd}
\end{equation*}

\item[vertical lift] The vertical lift $l\^M\colon\T\^M\Rightarrow{\T\^M}^2$ is induced by the natural transformation defined by the universality of the pullback diagram~\eqref{equation:sliceability}:
\begin{equation*}
\begin{tikzcd}
{\T\^ME} &&& {\T E} \\
& {{\T\^M}^2E} & {\T\T\^ME} & {\T^2E} \\
M & M & {\T M} & {\T^2M}
\arrow["\iota", from=1-1, to=1-4]
\arrow["{l\^M}", dashed, from=1-1, to=2-2]
\arrow["{q\^M}"', from=1-1, to=3-1]
\arrow["l", from=1-4, to=2-4]
\arrow["\iota", from=2-2, to=2-3]
\arrow["{{q\^M}^2}"', from=2-2, to=3-2]
\arrow["\lrcorner"{anchor=center, pos=0.125}, draw=none, from=2-2, to=3-3]
\arrow["{\T\iota}", from=2-3, to=2-4]
\arrow["{\T q\^M}", from=2-3, to=3-3]
\arrow["\lrcorner"{anchor=center, pos=0.125}, draw=none, from=2-3, to=3-4]
\arrow["{\T^2q}", from=2-4, to=3-4]
\arrow[Rightarrow, no head, from=3-1, to=3-2]
\arrow["z"', from=3-2, to=3-3]
\arrow["{\T z}"', from=3-3, to=3-4]
\end{tikzcd}
\end{equation*}

\item[canonical flip] The canonical flip $c\^M\colon{\T\^M}^2\Rightarrow{\T\^M}^2$ is induced by the natural transformation defined by the universality of the pullback diagram~\eqref{equation:sliceability}:
\begin{equation*}
\begin{tikzcd}
{{\T\^M}^2E} && {\T\^{\T M}\T E} & {\T^2E} \\
& {{\T\^M}^2E} & {\T\T\^ME} & {\T^2E} \\
M & M & {\T M} & {\T^2M}
\arrow["\iota", from=1-1, to=1-3]
\arrow["{c\^M}", dashed, from=1-1, to=2-2]
\arrow["{{q\^M}^2}"', from=1-1, to=3-1]
\arrow["{\iota\T}", from=1-3, to=1-4]
\arrow["c", from=1-4, to=2-4]
\arrow["\iota", from=2-2, to=2-3]
\arrow["{{q\^M}^2}"', from=2-2, to=3-2]
\arrow["\lrcorner"{anchor=center, pos=0.125}, draw=none, from=2-2, to=3-3]
\arrow["{\T\iota}", from=2-3, to=2-4]
\arrow["{\T q\^M}", from=2-3, to=3-3]
\arrow["\lrcorner"{anchor=center, pos=0.125}, draw=none, from=2-3, to=3-4]
\arrow["{\T^2q}", from=2-4, to=3-4]
\arrow[Rightarrow, no head, from=3-1, to=3-2]
\arrow["z"', from=3-2, to=3-3]
\arrow["{\T z}"', from=3-3, to=3-4]
\end{tikzcd}
\end{equation*}
\end{description}
Moreover, if $(\X,\TT)$ has negatives with negation $n\colon\T\Rightarrow\T$, so does $\Dsply(\X,\TT;M)$:
\begin{description}
\item[negation] The negation $n\^M\colon\T\^M\Rightarrow\T\^M$ is induced by the natural transformation defined by the universality of the pullback diagram~\eqref{equation:sliceability}:
\begin{equation*}
\begin{tikzcd}
{\T\^ME} && {\T_2E} \\
& {\T\^ME} & {\T E} \\
M & M & {\T M}
\arrow["{q\^M}"', from=2-2, to=3-2]
\arrow["z"', from=3-2, to=3-3]
\arrow["\lrcorner"{anchor=center, pos=0.125}, draw=none, from=2-2, to=3-3]
\arrow["\iota", from=2-2, to=2-3]
\arrow["{\T q}", from=2-3, to=3-3]
\arrow["n", from=1-3, to=2-3]
\arrow[Rightarrow, no head, from=3-1, to=3-2]
\arrow["\iota", from=1-1, to=1-3]
\arrow["{q\^M}"', from=1-1, to=3-1]
\arrow["{n\^M}", dashed, from=1-1, to=2-2]
\end{tikzcd}
\end{equation*}
\end{description}
This construction was first introduced by Rosick\`y in his seminal article~\cite{rosicky:tangent-cats}. More recently in~\cite[Section~5]{cockett:differential-bundles}, Cockett and Cruttwell showed how this construction is naturally contextualized within the theory of tangent fibrations. In particular, they proved that the fibres of the functor $\Dsply(\X,\TT)\to(\X,\TT)$ are precisely the slice tangent categories $\Dsply(\X,\TT;M)$, parametrized by the objects $M$ of $\X$. This result inspired the second author to investigate the relationship between tangent fibrations and the celebrated Grothendieck construction; for more details, see~\cite{lanfranchi:grothendieck-tangent-cats}.

\begin{remark}
\label{remark:sliceability-Rosicky-version}
In Rosick\`y's original version for the construction of the slice tangent category the pullback diagram ~\eqref{equation:sliceability} was replaced by the equalizer diagram:
\begin{equation*}
\begin{tikzcd}
{\T\^ME} & {\T E} && {\T M}
\arrow["\iota", dashed, from=1-1, to=1-2]
\arrow["{\T q}", shift left=2, from=1-2, to=1-4]
\arrow["{\T q pz}"', shift right=2, from=1-2, to=1-4]
\end{tikzcd}
\end{equation*}
So, in Rosick\`y's version, the tangent bundle functor $\T\^M$ sends $q^E_M$ to:
\begin{align*}
&\T\^ME\xrightarrow{\iota}\T E\xrightarrow{\T q}\T M\xrightarrow{p}M
\end{align*}
It is straightforward to show that these two definitions are equivalent.
\end{remark}

\begin{example}
\label{example:cAlg-op-as-slice-tangent-category}
In~\cite[Section~4.1]{cruttwell:algebraic-geometry}, the first author and Lemay showed that the tangent category $(\cAlg^\op_R,\TT)$ of affine schemes of $R$ can also be characterized as the slice tangent category of $(\cRing^\op,\TT)$ over the ring $R$. Indeed, unital and commutative algebras over a ring $R$ are equivalently characterized as morphisms $R\to M$ of rings. Since the display maps in $(\cAlg^\op_R,\TT)$ are all maps (Example \ref{ex:display_in_AG}), this is the same as the display slice tangent category. 
\end{example}

One can see that the display slice tangent category $\Dsply(\X,\TT;M)$ of any (not-necessarily cartesian) tangent category over a given object is always a cartesian tangent category. Indeed, since the objects of $\Dsply(\X,\TT;M)$ are the tangent display maps $q\colon E\to M$ of $(\X,\TT)$ with codomain $M$, given two such tangent display maps $q\colon E\to M$ and $q'\colon E'\to M$, the pullback $q\times_Mq'\colon E\times_ME'\to M$ in $\X$ of $q$ along $q'$ always exists.
\par However, $q\times_Mq'$ is the cartesian product of $q$ with $q'$ in $(\X,\TT)/M$. By induction, all finite products (the terminal objects will be the tangent display map $\id_M\colon M\to M$) exist in $(\X,\TT)/M$. Moreover, the tangent bundle functor $\T\^M$ of $\Dsply(\X,\TT;M)$ preserves products in $\Dsply(\X,\TT;M)$ since $\T$ in $\X$ preserves pullbacks between tangent display maps.

\begin{lemma}
\label{lemma:slice-tangent-category-cartesian}
The display slice tangent category $\Dsply(\X,\TT;M)$ of a tangent category $(\X,\TT)$ over a given object $M$ of $\X$ is a cartesian tangent category.
\end{lemma}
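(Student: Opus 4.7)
The plan is to verify, in order, three things: that $\Dsply(\X,\TT;M)$ has a terminal object, that it has binary products, and that the tangent bundle functor $\T\^M$ preserves these products. The informal discussion preceding the lemma already sketches the strategy, so my task is mainly to make each step precise using results established earlier in the paper.

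First I would identify the terminal object as $\id_M\colon M\to M$, which is trivially a tangent display map; for any $q^E_M$ in the slice, the morphism $q\colon E\to M$ itself is the unique slice morphism $q^E_M\to\id_M$. For binary products, given tangent display maps $q\colon E\to M$ and $q'\colon E'\to M$, the pullback $E\times_ME'$ exists in $\X$ because $q$ admits all $\T$-pullbacks, and by Theorem~\ref{theorem:T-display-maps-form-maximal-T-display-system} the family of tangent display maps is closed under pullbacks and under composition. Hence the projection $\pi'\colon E\times_ME'\to E'$ is a tangent display map, and so is the composite $q\times_Mq'\=\pi'q'\colon E\times_ME'\to M$, placing the product inside the slice. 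The universal property as a categorical product in $\Dsply(\X,\TT;M)$ is immediate from the universality of the pullback in $\X$.

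For product preservation, the key observation is that the pullback defining $q\times_Mq'$ is a $\T$-pullback, so $\T$ sends it to a pullback $\T(E\times_ME')\cong\T E\times_{\T M}\T E'$ with $\T(q\times_Mq')\cong\T q\times_{\T M}\T q'$. A pullback-pasting argument via Lemma~\ref{lemma:pullback-lemma}, applied to the composite square obtained by then pulling back along $z\colon M\to\T M$ using diagram~\eqref{equation:sliceability}, identifies $\T\^M(q\times_Mq')$ with $\T\^Mq\times_M\T\^Mq'$. This isomorphism is induced by the universal property of the sliceability pullback and is compatible with the slice projections.

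The main obstacle will be the bookkeeping in the last step: one has to check that the comparison morphism arising from the universal property is the canonical one making $\T\^M$ into a product-preserving functor, and that all finite products can be assembled from binary products together with the terminal $\id_M$. Both reduce to iterated applications of Lemma~\ref{lemma:pullback-lemma} together with the closure properties of $\Dsply(\X,\TT)$ from Theorem~\ref{theorem:T-display-maps-form-maximal-T-display-system}; no genuinely new ingredient is required beyond what has already been set up.
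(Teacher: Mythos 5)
Your proposal is correct and follows essentially the same route as the paper: the paper's argument (given in the discussion immediately preceding the lemma) likewise takes $\id_M$ as the terminal object, obtains binary products as the pullbacks $q\times_Mq'$ over $M$ (which exist and remain tangent display maps by the closure properties of $\Dsply(\X,\TT)$), and deduces preservation of products by $\T\^M$ from the fact that $\T$ preserves pullbacks between tangent display maps. Your version merely spells out the comparison-morphism bookkeeping via Lemma~\ref{lemma:pullback-lemma}, which the paper leaves implicit.
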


Our goal for the rest of this section is to provide a universal property for the display slice construction. We begin by introducing the category of tangent pairs.

\begin{definition}
\label{definition:tangent-pair}
A \textbf{tangent pair} consists of a pair $((\X,\TT);M)$ formed by a tangent category $(\X,\TT)$ and an object $M$ of $\X$. Moreover, a morphism $((\X,\TT);M)\to((\X',\TT');M')$ of tangent pairs consists of a pair $((F,\alpha);\varphi)$ formed by a lax tangent morphism $(F,\alpha)\colon(\X,\TT)\to(\X',\TT')$ which preserves tangent display maps over $M$ i.e., for which every tangent display map $q\colon E\to M$ is sent to a tangent display map $Fq\colon FE\to FM$, together with an isomorphism $\varphi\colon FM\to M'$ of $\X'$.
\end{definition}

The composition of two morphisms $((F,\alpha);\varphi)\colon((\X,\TT);M)\to((\X',\TT');M')$ and $((G,\beta);\psi)\colon((\X',\TT');M')\to((\X'',\TT'');M'')$ of tangent pairs is defined as the tangent morphism $(G,\beta)\o(F,\alpha)=(G\o F,G\alpha\beta_F)$ together with the isomorphism:
\begin{align*}
&GFM\xrightarrow{G\varphi}GM'\xrightarrow{\psi}M''
\end{align*}
Notice, in particular, that since $G$ preserves tangent display maps and tangent display maps are closed under composition, this defines a morphism of tangent pairs. Therefore, tangent pairs together with their morphisms form a category, which we denote by $\TngPair$.\newline
\par Next, we introduce the pseudofunctor $\Term\colon\cTngCat\to\TngPair$ (where $\cTngCat$ is the 2-category of cartesian tangent categories). 
First, fix some terminal object $\ast$ of $\X$. Then define $\Term$ to send a cartesian tangent category $(\X,\TT)$ to the tangent pair $((\X,\TT);\ast)$.
\par Notice that, since pullbacks over a terminal object are precisely cartesian products, tangent display maps over a terminal object consist of those objects $E$ of $\X$ for which the cartesian product $E'\times E$ exists for any other object $E'$ and for which this product is preserved by $\T$, i.e., $\T(E'\times E)\cong\T E'\times\T E$. Therefore, for a cartesian tangent category $(\X,\TT)$, tangent display maps over a terminal object are all the objects.
\par This observation implies that $\Term$ is a well-defined functor. Indeed, given a cartesian tangent morphism $(F,\alpha)\colon(\X,\TT)\to(\X',\TT')$ let $\Term(F,\alpha)$ be the pair $((F,\alpha);!)$ formed by $(F,\alpha)$ (notice that since $F$ preserves the terminal object, $F$ sends tangent display maps of $(\X,\TT)$ on the terminal object $\ast$ of $\X$ to tangent display maps of $\X'$ over $F\ast\cong\ast'$) and by the isomorphism $!\colon F\ast\to\ast'$ (which is trivially a tangent display map since it is an isomorphism).\newline
\par On the other hand, we can also define the pseudofunctor $\Slice\colon\TngPair\to\cTngCat$ which sends a tangent pair $((\X,\TT);M)$ to the display slice tangent category $\Dsply(\X,\TT;M)$. To understand how $\Slice$ acts on morphisms of tangent pairs, we first need to show that we can lift such a morphism to the slice tangent categories.

\begin{remark}
\label{remark:pseudofunctoriality-slice-term}
$\Term$ and $\Slice$ are not strict functors but rather pseudofunctors, since terminal objects and slice tangent structures are only defined up to unique isomorphisms. Thus, the associators and unitors are defined by the induced unique isomorphisms.
\end{remark}

\begin{proposition}
\label{proposition:lifting-tangent-pair-morphisms-to-slice}
Consider two tangent pairs $((\X,\TT);M)$ and $((\X',\TT');M')$ and a morphism of tangent pairs $((F,\alpha);\varphi)\colon((\X,\TT);M)\to((\X',\TT');M')$. Let $q\colon E\to M$ be a tangent display map in $(\X,\TT)$ over $M$. Finally, consider the morphism $\theta_q\colon F\T\^ME\to{\T'}\^{M'}FE$ as the unique morphism which makes the following diagram commutes:
\begin{equation*}
\begin{tikzcd}
{F\T\^ME} &&&& {F\T E} \\
& {{\T'}\^{M'}FE} && {\T'FE} \\
&&& {\T'FM} \\
& {M'} && {\T'M'} \\
FM &&&& {F\T M}
\arrow["{F\iota_q}", from=1-1, to=1-5]
\arrow["{\theta_q}"{pos=0.7}, dashed, from=1-1, to=2-2]
\arrow["{Fq\^M}"'{pos=0.6}, from=1-1, to=5-1]
\arrow["\alpha"{description}, from=1-5, to=2-4]
\arrow["{F\T q}"{pos=0.6}, from=1-5, to=5-5]
\arrow["{\iota_{Fq}}", from=2-2, to=2-4]
\arrow["{(Fq\varphi)\^{M'}}", from=2-2, to=4-2]
\arrow["\lrcorner"{anchor=center, pos=0.125}, draw=none, from=2-2, to=4-4]
\arrow["{\T'Fq}"', from=2-4, to=3-4]
\arrow["{\T'\varphi}"', from=3-4, to=4-4]
\arrow["z"', from=4-2, to=4-4]
\arrow["\varphi"{description}, from=5-1, to=4-2]
\arrow["Fz"', from=5-1, to=5-5]
\arrow["\alpha"{description}, from=5-5, to=3-4]
\end{tikzcd}
\end{equation*}
Therefore, the functor:
\begin{align*}
&F\colon\X/M\to{\X'}/M'\\
&F(q\colon E\to M)\mapsto(FE\xrightarrow{Fq}FM\xrightarrow{\varphi}M')\\
&F(g\colon(q\colon E\to M)\to(q'\colon E'\to M))\mapsto(Fg\colon(Fq\varphi)\to(Fq'\varphi))
\end{align*}
extends to a lax tangent morphism:
\begin{align*}
&(F,\alpha)/\varphi\colon(\X,\TT)/M\to(\X',\TT')/M'
\end{align*}
whose distributive law is defined by the natural transformation $\theta_q\colon F\T\^Mq\to{\T'}\^{M'}Fq$.
\begin{proof}
For starters, let us prove the compatibility between $\theta$ and the projections:
\begin{equation*}
\begin{tikzcd}
{F\T\^Mq} & {{\T'}\^{M'}Fq} \\
Fq
\arrow["{Fp\^M}"', from=1-1, to=2-1]
\arrow["{p\^{M'}_F}", from=1-2, to=2-1]
\arrow["\theta", from=1-1, to=1-2]
\end{tikzcd}
\end{equation*}
which corresponds to the diagram:
\begin{equation*}
\begin{tikzcd}
{F\T\^ME} & {{\T'}\^{M'}FE} \\
{F\T E} & {\T' FE} \\
FE & FE
\arrow[""{name=0, anchor=center, inner sep=0}, "\theta", from=1-1, to=1-2]
\arrow["{\iota_F}", from=1-2, to=2-2]
\arrow["{p_F}", from=2-2, to=3-2]
\arrow["F\iota"', from=1-1, to=2-1]
\arrow["Fp"', from=2-1, to=3-1]
\arrow[""{name=1, anchor=center, inner sep=0}, Rightarrow, no head, from=3-1, to=3-2]
\arrow[""{name=2, anchor=center, inner sep=0}, "\alpha"{description}, from=2-1, to=2-2]
\end{tikzcd}
\end{equation*}
Let us consider the compatibility diagram between $\theta$ and the zero morphisms:
\begin{equation*}
\begin{tikzcd}
{F\T\^Mq} & {{\T'}\^{M'}Fq} \\
Fq
\arrow["\theta", from=1-1, to=1-2]
\arrow["{Fz\^M}", from=2-1, to=1-1]
\arrow["{z\^{M'}_F}"', from=2-1, to=1-2]
\end{tikzcd}
\end{equation*}
To show that, first, consider the diagram:
\begin{equation*}
\begin{tikzcd}
{F\T\^ME} &&& {{\T'}\^{M'}FE} \\
& {F\T E} & {\T' FE} \\
FE &&& FE
\arrow[""{name=0, anchor=center, inner sep=0}, "\theta", from=1-1, to=1-4]
\arrow["{\iota_F}"', from=1-4, to=2-3]
\arrow[""{name=1, anchor=center, inner sep=0}, "{Fz\^M}", from=3-1, to=1-1]
\arrow["F\iota", from=1-1, to=2-2]
\arrow[""{name=2, anchor=center, inner sep=0}, "\alpha"', from=2-2, to=2-3]
\arrow["Fz"', from=3-1, to=2-2]
\arrow[""{name=3, anchor=center, inner sep=0}, Rightarrow, no head, from=3-1, to=3-4]
\arrow["{z_F}", from=3-4, to=2-3]
\arrow[""{name=4, anchor=center, inner sep=0}, "{z\^{M'}_F}"', from=3-4, to=1-4]
\end{tikzcd}
\end{equation*}
Thus $Fz\^M\theta \iota_F=z\^{M'}_F\iota_F$ and from the universality of $\iota_F$ we have that $Fz\^M\theta=z\^{M'}_F$, as expected. The next step is to prove the compatibility with the sum morphism:
\begin{equation*}
\begin{tikzcd}
{F\T\^M_2q} & {{\T'}\^{M'}_2Fq} \\
{F\T\^Mq} & {{\T'}\^{M'}Fq}
\arrow["\theta\times\theta", from=1-1, to=1-2]
\arrow["{Fs\^M}"', from=1-1, to=2-1]
\arrow["{s\^M_F}", from=1-2, to=2-2]
\arrow["\theta"', from=2-1, to=2-2]
\end{tikzcd}
\end{equation*}
Thus, consider the following diagram:
\begin{equation*}
\begin{tikzcd}
{F\T\^M_2E} &&& {{\T'}\^{M'}_2FE} \\
& {F\T_2E} & {\T'_2FE} \\
& {F\T E} & {\T' FE} \\
{F\T\^ME} &&& {{\T'}\^{M'}FE}
\arrow[""{name=0, anchor=center, inner sep=0}, "\theta"', from=4-1, to=4-4]
\arrow["{\iota_F}", from=4-4, to=3-3]
\arrow[""{name=1, anchor=center, inner sep=0}, "{Fs\^M}"', from=1-1, to=4-1]
\arrow["F\iota"', from=4-1, to=3-2]
\arrow[""{name=2, anchor=center, inner sep=0}, "\alpha", from=3-2, to=3-3]
\arrow[""{name=3, anchor=center, inner sep=0}, "{s\^{M'}_F}", from=1-4, to=4-4]
\arrow["{\iota_F\times \iota_F}"', from=1-4, to=2-3]
\arrow[""{name=4, anchor=center, inner sep=0}, "{s_F}", from=2-3, to=3-3]
\arrow["{F\iota\times F\iota}", from=1-1, to=2-2]
\arrow[""{name=5, anchor=center, inner sep=0}, "Fs"', from=2-2, to=3-2]
\arrow[""{name=6, anchor=center, inner sep=0}, "\alpha\times\alpha", from=2-2, to=2-3]
\arrow[""{name=7, anchor=center, inner sep=0}, "\theta\times\theta", from=1-1, to=1-4]
\end{tikzcd}
\end{equation*}
Thus, $Fs\^M\theta \iota_F=(\theta\times\theta)s\^{M'}_F\iota_F$ and from the universality of $\iota_F$ we conclude that $Fs\^M\theta=(\theta\times\theta)s\^{M'}$, as expected. Let us prove the compatibility with the lift:
\begin{equation*}
\begin{tikzcd}
{F\T\^Mq} && {{\T'}\^{M'}Fq} \\
{F(\T\^M)^2q} & {{\T'}\^{M'}F\T\^Mq} & {({\T'}\^{M'})^2Fq}
\arrow["\theta", from=1-1, to=1-3]
\arrow["{Fl\^M}"', from=1-1, to=2-1]
\arrow["{l\^{M'}_F}", from=1-3, to=2-3]
\arrow["{\theta_{\T\^M}}"', from=2-1, to=2-2]
\arrow["{{\T'}\^{M'}\theta}"', from=2-2, to=2-3]
\end{tikzcd}
\end{equation*}
As before, consider the following diagram:
\begin{equation*}
\adjustbox{width=\linewidth,center}{
\begin{tikzcd}
{F\T\^ME} &&&& {{\T'}\^{M'}FE} \\
& {F\T E} && {\T' FE} \\
& {F\T^2E} & {\T' F\T E} & {\T'^2FE} \\
& {F\T\^M\T E} & {{\T'}\^{M'}F\T E} & {{\T'}\^{M'}\T' FE} \\
{F(\T\^M)^2E} && {{\T'}\^{M'}F\T\^ME} && {({\T'}\^{M'})^2FE}
\arrow[""{name=0, anchor=center, inner sep=0}, "\theta", from=1-1, to=1-5]
\arrow[""{name=1, anchor=center, inner sep=0}, "{l\^{M'}_F}", from=1-5, to=5-5]
\arrow["{\iota_{\T' F}}"', from=4-4, to=3-4]
\arrow[""{name=2, anchor=center, inner sep=0}, "{Fl\^M}"', from=1-1, to=5-1]
\arrow["{\iota_F}"', from=1-5, to=2-4]
\arrow["{l_F}", from=2-4, to=3-4]
\arrow["F\iota"', from=1-1, to=2-2]
\arrow[""{name=3, anchor=center, inner sep=0}, "\alpha", from=2-2, to=2-4]
\arrow["Fl"', from=2-2, to=3-2]
\arrow["{F\T\^M\iota}"{pos=0.3}, from=5-1, to=4-2]
\arrow["{F\iota_\T}", from=4-2, to=3-2]
\arrow["{\alpha_\T}", from=3-2, to=3-3]
\arrow["\T'\alpha", from=3-3, to=3-4]
\arrow[""{name=4, anchor=center, inner sep=0}, "{\theta_{\T\^M}}"', from=5-1, to=5-3]
\arrow[""{name=5, anchor=center, inner sep=0}, "{{\T'}\^{M'}\theta}"', from=5-3, to=5-5]
\arrow["{{\T'}\^{M'}\iota}"'{pos=0.3}, from=5-5, to=4-4]
\arrow["{{\T'}\^{M'}F\iota}"{description}, from=5-3, to=4-3]
\arrow["{{\T'}\^{M'}\alpha}", from=4-3, to=4-4]
\arrow["{\theta_\T}", from=4-2, to=4-3]
\arrow["{\iota_{F\T}}"{description}, from=4-3, to=3-3]
\end{tikzcd}
}
\end{equation*}
Therefore, $\theta l\^{M'}_F{\T'}\^{M'}\iota\iota_{\T' F}=Fl\^M\theta_{\T\^M}{\T'}\^{M'}\theta{\T'}\^{M'}\iota\iota_{\T' F}$. By the universality of ${\T'}\^{M'}\iota\iota_{\T' F}$ we conclude that $\theta l\^{M'}_F=Fl\^M\theta_{\T\^M}{\T'}\^{M'}\theta$, as expected. Finally, let us prove the compatibility with the canonical flip:
\begin{equation*}
\begin{tikzcd}
{F(\T\^M)^2q} & {{\T'}\^{M'}F\T\^Mq} & {({\T'}\^{M'})^2Fq} \\
{F(\T\^M)^2q} & {{\T'}\^{M'}F\T\^Mq} & {({\T'}\^{M'})^2Fq}
\arrow["{Fc\^M}"', from=1-1, to=2-1]
\arrow["{c\^{M'}_F}", from=1-3, to=2-3]
\arrow["{\theta_{\T\^M}}"', from=2-1, to=2-2]
\arrow["{{\T'}\^{M'}\theta}"', from=2-2, to=2-3]
\arrow["{\theta_{\T\^M}}", from=1-1, to=1-2]
\arrow["{{\T'}\^{M'}\theta}", from=1-2, to=1-3]
\end{tikzcd}
\end{equation*}
Thus:
\begin{equation*}
\adjustbox{width=\linewidth,center}{
\begin{tikzcd}
{F(\T\^M)^2E} && {{\T'}\^{M'}F\T\^ME} && {({\T'}\^{M'})^2FE} \\
& {F\T\^M\T E} & {{\T'}\^{M'}F\T E} & {{\T'}\^{M'}\T' FE} \\
& {F\T E} & {\T' F\T E} & {\T'^2FE} \\
& {F\T^2E} & {\T' F\T E} & {\T'^2FE} \\
& {F\T\^M\T E} & {{\T'}\^{M'}F\T E} & {{\T'}\^{M'}\T' FE} \\
{F(\T\^M)^2E} && {{\T'}\^{M'}F\T\^ME} && {({\T'}\^{M'})^2FE}
\arrow["{\iota_{\T' F}}"', from=5-4, to=4-4]
\arrow[""{name=0, anchor=center, inner sep=0}, "{Fc\^M}"', from=1-1, to=6-1]
\arrow["{c_F}", from=3-4, to=4-4]
\arrow["Fc"', from=3-2, to=4-2]
\arrow["{F\T\^M\iota}"{pos=0.3}, from=6-1, to=5-2]
\arrow["{F\iota_\T}", from=5-2, to=4-2]
\arrow["{\alpha_\T}", from=4-2, to=4-3]
\arrow["\T'\alpha", from=4-3, to=4-4]
\arrow[""{name=1, anchor=center, inner sep=0}, "{\theta_{\T\^M}}"', from=6-1, to=6-3]
\arrow[""{name=2, anchor=center, inner sep=0}, "{{\T'}\^{M'}\theta}"', from=6-3, to=6-5]
\arrow["{{\T'}\^{M'}\iota}"'{pos=0.3}, from=6-5, to=5-4]
\arrow["{{\T'}\^{M'}F\iota}"{description}, from=6-3, to=5-3]
\arrow["{{\T'}\^{M'}\alpha}", from=5-3, to=5-4]
\arrow["{\theta_\T}", from=5-2, to=5-3]
\arrow["{\iota_{F\T}}"{description}, from=5-3, to=4-3]
\arrow["\Nat"{description}, draw=none, from=4-3, to=5-4]
\arrow["{(\alpha,\theta;\iota)}"{description}, draw=none, from=4-3, to=5-2]
\arrow[""{name=3, anchor=center, inner sep=0}, "{c\^{M'}_F}", from=1-5, to=6-5]
\arrow["{{\T'}\^{M'}\iota}"', from=1-5, to=2-4]
\arrow["{\iota_{\T' F}}", from=2-4, to=3-4]
\arrow["{F\T\^M\iota}"', from=1-1, to=2-2]
\arrow["{F\iota_\T}"', from=2-2, to=3-2]
\arrow["{\iota_{F\T}}"{description}, from=2-3, to=3-3]
\arrow["{\alpha_\T}"', from=3-2, to=3-3]
\arrow["\T'\alpha"', from=3-3, to=3-4]
\arrow["{\theta_\T}"', from=2-2, to=2-3]
\arrow["{{\T'}\^{M'}\alpha}"', from=2-3, to=2-4]
\arrow["{{\T'}\^{M'}F\iota}"{description}, from=1-3, to=2-3]
\arrow[""{name=4, anchor=center, inner sep=0}, "{\theta_{\T\^M}}", from=1-1, to=1-3]
\arrow[""{name=5, anchor=center, inner sep=0}, "{{\T'}\^{M'}\theta}", from=1-3, to=1-5]
\end{tikzcd}
}
\end{equation*}
This proves that $\theta_{\T\^M}{\T'}\^{M'}\theta c\^{M'}_F{\T'}\^{M'}\iota\iota_{\T' F}=Fc\^M\theta_{\T\^M}{\T'}\^{M'}\theta{\T'}\^{M'}\iota\iota_{\T' F}$. Finally, using the universality of ${\T'}\^{M'}\iota\iota_{\T' F}$ we conclude that $\theta_{\T\^M}{\T'}\^{M'}\theta c\^{M'}_F=Fc\^M\theta_{\T\^M}{\T'}\^{M'}\theta$, as expected.
\end{proof}
\end{proposition}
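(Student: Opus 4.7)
The plan is to verify that the pair $((F,\alpha)/\varphi,\theta)$ satisfies the axioms of a lax tangent morphism between the slice tangent categories $\Dsply(\X,\TT;M)$ and $\Dsply(\X',\TT';M')$. First I would check that $\theta_q$ is well-defined, i.e., that the two morphisms $F\iota_q\,\alpha_E$ and $Fq\^M\,\varphi$ form a compatible cone over the pullback defining ${\T'}\^{M'}FE$; this follows by applying $F$ to diagram~\eqref{equation:sliceability}, postcomposing with $\alpha$ and $\T'\varphi$, and invoking naturality of $\alpha$ together with the zero-compatibility axiom of $(F,\alpha)$. That the lifted functor lands in $\Dsply(\X',\TT';M')$ is immediate, since $(F,\alpha)$ preserves tangent display maps over $M$ by assumption and $\varphi$ is an isomorphism (hence trivially a tangent display map, and tangent display maps are closed under composition by Theorem~\ref{theorem:T-display-maps-form-maximal-T-display-system}). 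Naturality of $\theta$ in $q$ reduces, after postcomposition with $\iota_{Fq'}$, to an identity following from functoriality of $F$ and naturality of $\alpha$; the component into $M'$ is automatic.

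Each of the remaining compatibility conditions---with the projection $p\^M$, zero $z\^M$, sum $s\^M$, vertical lift $l\^M$, canonical flip $c\^M$, and $n$-fold pullbacks $\T\^M_n$---then follows a common recipe. Since the codomain of $\theta$ and of the comparison maps is always assembled from pullbacks of the form~\eqref{equation:sliceability}, equality of two such morphisms can be reduced, by postcomposition with the appropriate chain of inclusions $\iota$, to an equality of morphisms into an iterated tangent bundle $\T'^k FE$ in $\X'$. This latter equality then holds by combining the axioms of $(F,\alpha)$ as a lax tangent morphism with the defining equations of the slice structural transformations (which are themselves induced by the universal property of~\eqref{equation:sliceability}). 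The $M'$-components of all comparisons agree automatically, since both sides eventually project to a power of $Fq\^M\,\varphi$.

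The main obstacle will be the compatibility with the vertical lift $l\^M$ and the canonical flip $c\^M$, since these target the twice-iterated slice tangent bundle $({\T'}\^{M'})^2 FE$, which sits inside $\T'^2 FE$ via three successively pasted pullbacks. To handle them I would construct a three-layer diagram with the outer pullback defining $({\T'}\^{M'})^2 FE$, the intermediate pullback defining ${\T'}\^{M'}F\T\^M E$, and the innermost inclusion into $\T'^2 FE$, and then invoke Lemma~\ref{lemma:pullback-lemma} to conclude that the pasted diagrams are again pullbacks, so that the required universal property is available. The ensuing diagram chase in $\X'$ reduces to the $l$-axiom (respectively $c$-axiom) for $(F,\alpha)$, the already-established compatibility of $\theta$ with $\iota$, and naturality of $\alpha$. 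I expect no conceptually new difficulties beyond the bookkeeping of large diagrams.
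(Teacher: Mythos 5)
Your proposal is correct and follows essentially the same strategy as the paper's proof: each compatibility condition (projection, zero, sum, lift, flip) is verified by postcomposing with the pullback inclusion $\iota$ (or the pasted chain ${\T'}\^{M'}\iota\,\iota_{\T'F}$ for the iterated cases) and invoking the universal property of the pullback~\eqref{equation:sliceability} together with the lax tangent morphism axioms of $(F,\alpha)$. The preliminary checks you list (well-definedness of $\theta_q$, preservation of tangent display maps, naturality in $q$) are treated as routine in the paper, and your handling of the two-layer pullbacks for $l\^M$ and $c\^M$ via Lemma~\ref{lemma:pullback-lemma} matches the paper's use of the composite inclusion.
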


Proposition~\ref{proposition:lifting-tangent-pair-morphisms-to-slice} allows one to lift morphisms of tangent pairs to the corresponding slice tangent categories. We can define $\Slice$ to be the pseudofunctor which sends a morphism $((F,\alpha);\varphi)\colon((\X,\TT);M)\to((\X',\TT');M')$ to the lax tangent morphism $(F,\alpha)/\varphi$. Notice also that since $F$ preserves tangent display maps over $M$, it also preserves the cartesian products between tangent display maps over $M$.

\begin{remark}
\label{remark:varphi-isomorphism}
To define morphisms of tangent pairs one could have simply asked $\varphi$ to be a tangent display map. However, in order for $(F,\alpha)/\varphi$ to preserve cartesian products we needed $\varphi$ to be an isomorphism.
\end{remark}

\begin{definition}
\label{definition:cartesian-morphisms-of-tangent-pairs}
A morphism 
$((F,\alpha);\varphi)\colon((\X,\TT);M)\to((\X',\TT');M')$ of tangent pairs is \textbf{cartesian} if the following diagrams:
\begin{equation*}
\begin{tikzcd}
{F\T E} & {\T' FE} \\
{F\T M} & {\T' FM}
\arrow["{F\T q}"', from=1-1, to=2-1]
\arrow["{\T' Fq}", from=1-2, to=2-2]
\arrow["\alpha"', from=2-1, to=2-2]
\arrow["\alpha", from=1-1, to=1-2]
\end{tikzcd}\hfill
\begin{tikzcd}
{F\T\^ME} & {F\T E} \\
FM & {F\T M}
\arrow["{F\T q}", from=1-2, to=2-2]
\arrow["Fz"', from=2-1, to=2-2]
\arrow["{F\iota_q}", from=1-1, to=1-2]
\arrow["{Fq^\*}"', from=1-1, to=2-1]
\end{tikzcd}
\end{equation*}
are pullback diagrams, for every tangent display map $q\colon E\to M$ of $(\X,\TT)$ over $M$.
\end{definition}

\begin{remark}
\label{remark:display-bundle-preserving-not-cartesian}
Even if the functor $F$ underlying a morphism of tangent pairs preserves tangent display maps over the given object $M$ of the pair, it is not guaranteed that $F$ preserves tangent display maps over $\T M$. This is the reason why in Definition~\ref{definition:cartesian-morphisms-of-tangent-pairs} we require the right diagram to be a pullback.
\end{remark}

\begin{lemma}
\label{lemma:cartesian-morphisms-become-strong}
A cartesian morphism of tangent pairs $((F,\alpha);\varphi)\colon((\X,\TT);M)\to((\X',\TT');M')$ lifts to a strong tangent morphism between the slice tangent categories. Concretely, this means that the natural transformation $\theta_q\colon F\T\^Mq\to{\T'}\^{M'}Fq$ defined in Proposition~\ref{proposition:lifting-tangent-pair-morphisms-to-slice} is invertible.
\begin{proof}
Consider the following diagram:
\begin{equation*}
\begin{tikzcd}
{F\T\^ME} & {F\T E} & {\T' FE} \\
& {F\T M} \\
FM && {\T' FM} \\
M' && {\T' M'}
\arrow["Fv", from=1-1, to=1-2]
\arrow["\alpha", from=1-2, to=1-3]
\arrow["{F\T\^Mq}"', from=1-1, to=3-1]
\arrow["Fz", from=3-1, to=2-2]
\arrow["{F\T q}"', from=1-2, to=2-2]
\arrow["\alpha", from=2-2, to=3-3]
\arrow[""{name=0, anchor=center, inner sep=0}, "{\T' Ff}", from=1-3, to=3-3]
\arrow["{z_F}"', from=3-1, to=3-3]
\arrow["\varphi"', from=3-1, to=4-1]
\arrow[""{name=1, anchor=center, inner sep=0}, "z"', from=4-1, to=4-3]
\arrow["\T'\varphi", from=3-3, to=4-3]
\arrow["\lrcorner"{anchor=center, pos=0.125}, draw=none, from=1-1, to=2-2]
\arrow["\lrcorner"{anchor=center, pos=0.125}, draw=none, from=1-2, to=0]
\arrow["\lrcorner"{anchor=center, pos=0.125}, draw=none, from=3-1, to=1]
\end{tikzcd}
\end{equation*}
where we used that $Fz\alpha=z_F$. Since $((F,\alpha);\varphi)$ is cartesian, such a diagram is formed by pullback diagrams, thus it is itself a pullback diagram. (Notice that the bottom square diagram is a pullback because $\varphi$ is an isomorphism.) On the other hand, by definition, $\theta$ is defined by the diagram:
\begin{equation*}
\adjustbox{scale=.7,center}{
\begin{tikzcd}
{{\T'}\^{M'}FE} \\
&&& {\T' FE} \\
M' && {F\T E} & {\T' FM} \\
& {F\T\^ME} && {\T' M'} \\
& FM \\
& M'
\arrow["Fv", from=4-2, to=3-3]
\arrow["\alpha", from=3-3, to=2-4]
\arrow["{F\T\^Mq}", from=4-2, to=5-2]
\arrow["{\T' Fq}", from=2-4, to=3-4]
\arrow["\varphi", from=5-2, to=6-2]
\arrow["z"', from=6-2, to=4-4]
\arrow["\T'\varphi", from=3-4, to=4-4]
\arrow["{\iota_F}", from=1-1, to=2-4]
\arrow["{{\T'}\^{M'}(Fq\varphi)}"', from=1-1, to=3-1]
\arrow["z"'{pos=0.7}, from=3-1, to=4-4]
\arrow["\theta"', dashed, from=4-2, to=1-1]
\arrow[Rightarrow, no head, from=6-2, to=3-1]
\end{tikzcd}
}
\end{equation*}
However, the top and the right rectangular sides of this triangular diagram are pullbacks, so $\theta$ must be an isomorphism.
\end{proof}
\end{lemma}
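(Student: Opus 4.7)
The plan is to realise $F\T\^M E$ as the apex of a pasted pullback whose cospan coincides with the one that defines $\T'\^{M'}FE$ in Equation~\eqref{equation:sliceability}, and then invoke the universal property to identify $\theta_q$ with the resulting comparison isomorphism.

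First, the cartesianness hypothesis supplies two pullback squares. Square A is the image under $F$ of the sliceability pullback defining $\T\^M E$, with corners $F\T\^M E,\ F\T E,\ FM,\ F\T M$ and legs $F\iota_q,\ F\T q,\ Fz,\ Fq\^M$; this is a pullback by the second condition of Definition~\ref{definition:cartesian-morphisms-of-tangent-pairs}. Square B is the comparison square with corners $F\T E,\ \T'FE,\ F\T M,\ \T'FM$, horizontal legs $\alpha$, and vertical legs $F\T q$ and $\T'Fq$; this is a pullback by the first condition of Definition~\ref{definition:cartesian-morphisms-of-tangent-pairs}. Horizontally pasting A and B yields a pullback with corners $F\T\^M E,\ \T'FE,\ FM,\ \T'FM$; its bottom leg is $Fz\cdot\alpha = z_{FM}$, the equality being the zero-preservation axiom for the lax tangent morphism $(F,\alpha)$.

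Next, introduce Square C with corners $FM,\ \T'FM,\ M',\ \T'M'$, horizontal legs $z_{FM}$ and $z_{M'}$, and vertical legs $\varphi$ and $\T'\varphi$. It commutes by naturality of $z$ along $\varphi$ and is a pullback since $\varphi$ (and hence $\T'\varphi$) is invertible. Pasting Square C vertically beneath the previous pasted rectangle produces an outer pullback with corners $F\T\^M E,\ \T'FE,\ M',\ \T'M'$, top leg $F\iota_q\cdot\alpha$, left leg $Fq\^M\cdot\varphi$, right leg $\T'Fq\cdot\T'\varphi$, and bottom leg $z_{M'}$. But this is precisely the cospan whose pullback defines $\T'\^{M'}(Fq\cdot\varphi)$, noting that $Fq\cdot\varphi$ is indeed a tangent display map over $M'$ because $F$ preserves tangent display maps over $M$, $\varphi$ is an isomorphism, and tangent display maps are closed under composition by Theorem~\ref{theorem:T-display-maps-form-maximal-T-display-system}.

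Consequently there exists a unique isomorphism $F\T\^M E \to \T'\^{M'}FE$ intertwining the two cones over this cospan, and by the very defining property of $\theta_q$ in Proposition~\ref{proposition:lifting-tangent-pair-morphisms-to-slice} — namely that it is the mediating morphism into that pullback satisfying $\theta_q\cdot\iota_{Fq} = F\iota_q\cdot\alpha$ and $\theta_q\cdot(Fq\varphi)\^{M'} = Fq\^M\cdot\varphi$ — this isomorphism must coincide with $\theta_q$. The step I expect to require the most care is verifying that the mediating morphism determined by the outer pasted pullback matches, on the nose, the morphism characterised by the triangular diagram defining $\theta_q$; this reduces to a short diagram chase combining $Fz\cdot\alpha = z_{FM}$ with the naturality square for $z$ at $\varphi$, together with the fact that pullback pasting preserves the mediating map.
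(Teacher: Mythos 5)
Your proposal is correct and follows essentially the same route as the paper: both arguments paste the $F$-image of the sliceability pullback, the cartesianness square for $\alpha$, and the naturality square of $z$ at the isomorphism $\varphi$ into one outer pullback over the cospan defining ${\T'}\^{M'}(Fq\varphi)$, then identify the resulting comparison isomorphism with $\theta_q$ via the universal property. The only difference is presentational, in that the paper phrases the last step as reading the two pullback cones off a single triangular diagram rather than explicitly matching mediating morphisms.
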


We can now characterize the operation which takes a tangent pair to its slice tangent category as an adjunction between pseudofunctors.

\begin{theorem}
\label{theorem:adjunction-Term-Slice}
The pseudofunctors $\Term\colon\cTngCat\leftrightarrows\TngPair\colon\Slice$ form an adjunction whose left adjoint is $\Term$, the right adjoint is $\Slice$, the unit $(\U,\eta)\colon(\X,\TT)\to\Slice(\Term(\X,\TT))=(\X,\TT)/\ast$, as a cartesian tangent morphism between cartesian tangent categories, is the isomorphism:
\begin{align*}
&\U\colon\X\to\X/\ast\\
&\U(M)\mapsto(!\colon M\to\*)\\
&\U(f\colon M\to N)\mapsto(f(!\colon M\to\*)\to(!\colon N\to\*))\\
&\eta\colon(\U(\T M))=(!\colon\T M\to\*)\xrightarrow{\id_{\T M}}(!\colon\T M\to\*)=\T(\U(M))
\end{align*}
and the counit $((C,\epsilon);\varphi)\colon\Term(\Slice((\X,\TT);M))=((\X,\TT)/M;\id_M)\to((\X,\TT);M)$ is the morphism of tangent pairs:
\begin{align*}
&C\colon(\X,\TT)/M\to(\X,\TT)\\
&C(q\colon E\to M)\mapsto E\\
&C(g\colon(q\colon E\to M)\to(q'\colon E'\to M))\mapsto(g\colon E\to E')\\
&\epsilon\colon C(\T\^M(q\colon E\to M))=\T\^ME\xrightarrow{\iota_q}\T E=\T(C(q\colon E\to M))\\
&\varphi\colon C(\id_M\colon M\to M)=M\xrightarrow{\id_M}M
\end{align*}
\begin{proof}
Let us start by noticing that the unit and the counit are well-defined morphisms. The underlying functor $\U$ of the unit is clearly cartesian, so $(\U,\eta)$ is well-defined. Let us focus on the counit. A tangent display map in $(\X,\TT)/M$ over $\id_M\colon M\to M$ consists of an object $q\colon E\to M$ of $(\X,\TT)/M$, i.e., a tangent display map of $(\X,\TT)$ over $M$, together with a morphism $q'\colon E'\to M$ for which $q'\id_M=q$. This implies that tangent display maps of $(\X,\TT)/M$ over $\id_M$ are also tangent display maps of $(\X,\TT)$ over $M$. So, the underlying functor $C$ of the counit sends tangent display maps to tangent display maps.
\par The next step is to show that the unit $(\U,\eta)$ and the counit $((C,\epsilon);\varphi)$ satisfy the triangle identities. Let us start by considering the following diagram:
\begin{equation*}
\begin{tikzcd}
{\Term(\X,\TT)} & {\Term(\Slice(\Term(\X,\TT)))} \\
& {\Term(\X,\TT)}
\arrow["{\Term(\U,\eta)}", from=1-1, to=1-2]
\arrow["{((C,\epsilon);\varphi)_\Term}", from=1-2, to=2-2]
\arrow[Rightarrow, no head, from=1-1, to=2-2]
\end{tikzcd}
\end{equation*}
for a cartesian tangent category $(\X,\TT)$. It is straightforward to see that the underlying tangent morphisms $(C,\epsilon)$ and $(\U,\eta)$ of $((C,\epsilon);\varphi)_{\Term}$ and $\Term(\U,\eta)$ define an equivalence between $(\X,\TT)$ and $(\X,\TT)/\*$ and that, by the universality of the terminal object, the composition of the comparison morphisms $\varphi=\id_\*$ and $!\colon \U\*\to\*$ is the identity over the terminal object. Similarly, by considering the diagram:
\begin{equation*}
\begin{tikzcd}
{\Slice((\X,\TT);M)} & {\Slice(\Term(\Slice((\X,\TT);M)))} \\
& {\Slice((\X,\TT);M)}
\arrow["{(\U,\eta)_\Slice}", from=1-1, to=1-2]
\arrow["{\Slice((C,\epsilon);\varphi)}", from=1-2, to=2-2]
\arrow[Rightarrow, no head, from=1-1, to=2-2]
\end{tikzcd}
\end{equation*}
for a tangent pair $((\X,\TT);M)$, it is straightforward to show the underlying tangent morphisms of $\Slice((C,\epsilon);\varphi)$ and $(\U,\eta)_{\Slice}$ define an equivalence between $(\X,\TT)/M$ and $((\X,\TT)/M)/\id_M$ and that the composition of the comparison morphisms gives the identity. Finally, notice that the unit is always an isomorphism.
\end{proof}
\end{theorem}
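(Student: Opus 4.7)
The plan is to verify four points in order: (i) the unit $(\U,\eta)$ is a cartesian tangent morphism; (ii) the counit $((C,\epsilon);\varphi)$ is a morphism of tangent pairs in the sense of Definition~\ref{definition:tangent-pair}; (iii) both composites in the triangle identities yield the appropriate identities modulo the coherence cells inherent in the pseudofunctoriality; and (iv) the assignments are pseudonatural in the relevant arguments. First I would handle the unit. Since $\*$ is terminal in $\X$, the functor $\U\colon\X\to\X/\*$ is an isomorphism of categories and trivially preserves whatever finite products $\X$ has. For the tangent structure on $\X/\*$, recall that $\T\^\*q$ is computed by pulling $\T q$ back along $z\colon\*\to\T\*$; when $q=!\colon M\to\*$, and since $\T\*\cong\*$ in a cartesian tangent category, this pullback collapses to $\T M$. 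Hence $\T\^\*\U M=\U\T M$ up to canonical isomorphism and $\eta=\id$; compatibility of $(\U,\eta)$ with $p,z,s,l,c$ then reduces to the same trivialization.

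Next, I would verify the counit. The central observation is that objects of $\Dsply((\X,\TT)/M;\id_M)$ are morphisms $g\colon(q\colon E\to M)\to(\id_M\colon M\to M)$ in $\X/M$, i.e., morphisms $g\colon E\to M$ satisfying $g=q$. So the objects of $\Dsply((\X,\TT)/M;\id_M)$ correspond bijectively, via $C$, to the objects of $\Dsply(\X,\TT;M)$, showing that $C$ preserves tangent display maps over $\id_M$. The distributive law $\epsilon_q\=\iota_q$ is natural by the universal property of~\eqref{equation:sliceability}, and its compatibility with the projection, zero, sum, lift, and flip reduces precisely to the defining pullback diagrams of $p\^M, z\^M, s\^M, l\^M, c\^M$ given in the construction of $\Dsply(\X,\TT;M)$, each of which was built from exactly that universal property.

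Third, I would verify the triangle identities. The composite $C\circ\U\colon\X\to\X/\*\to\X$ is the identity functor, and $\epsilon$ at $q=!\colon M\to\*$ is $\iota_!=\id$, so the $\Term$-triangle reduces to an identity up to the coherence cell coming from the uniqueness of the terminal object (concretely, $\varphi=\id_\*$ composed with $!\colon\U\*\to\*$). For the $\Slice$-triangle, I would exploit that $\id_M$ is terminal in $(\X,\TT)/M$, which makes $((\X,\TT)/M)/\id_M\simeq(\X,\TT)/M$ canonically, and that this equivalence intertwines the slice tangent structures because slicing over an identity leaves $\iota$ unchanged.

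The main obstacle is keeping careful track of the pseudofunctoriality of $\Term$ and $\Slice$ (Remark~\ref{remark:pseudofunctoriality-slice-term}): the triangle identities only hold modulo canonical coherence cells, and one must check that the comparison morphism $\theta$ of Proposition~\ref{proposition:lifting-tangent-pair-morphisms-to-slice}---invertible in our situation by Lemma~\ref{lemma:cartesian-morphisms-become-strong} since the counit is cartesian---collapses to the appropriate coherence isomorphism in each triangle rather than introducing new data. Verifying this carefully, without getting lost in the thicket of universal-property induced maps defining $\T\^M$ and its natural transformations, is the delicate part; once that bookkeeping is done, the remaining checks are routine unwinding of definitions.
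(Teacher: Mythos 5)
Your proposal is correct and follows essentially the same route as the paper: check that the unit and counit are well-defined (the unit via triviality of slicing over the terminal object, the counit via the identification of tangent display maps over $\id_M$ in the slice with tangent display maps over $M$), and then verify the triangle identities up to the coherence cells coming from the pseudofunctoriality of $\Term$ and $\Slice$. Your additional attention to the comparison morphism $\theta$ of Proposition~\ref{proposition:lifting-tangent-pair-morphisms-to-slice} and to Lemma~\ref{lemma:cartesian-morphisms-become-strong} makes explicit a bookkeeping point the paper leaves implicit, but it does not change the argument.
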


\begin{remark}
\label{remark:paper-version-on-slicing}
As mentioned earlier, in~\cite{lanfranchi:differential-bundles-operadic-affine-schemes}, the second author used a different approach to defining the slice tangent category. Instead of considering only tangent display maps as objects of the slice tangent category, all morphisms with a fixed codomain were introduced. However, since the existence of tangent pullbacks along these morphisms is required in order to define the slice tangent structure, only so-called \textit{sliceable objects} were considered. We suggest the interested reader to consult the original paper for details. This discrepancy in the definition of the slice tangent category in the original paper results in a different adjunction. The adjunction $\Term\dashv\Slice$ between tangent pairs and cartesian tangent categories was replaced by an adjunction between tangent pairs and tangent categories with a terminal object.
\end{remark}

We end this section by considering tangent display maps in the slice. While it is not clear how to characterize all such maps, we at least have the following result. 

\begin{proposition}
\label{proposition:display-sliceable}
Let $h\colon E\to P$ a morphism of $\X$ and suppose that $hg=f$ for $g\colon P\to M$ and $f\colon E\to M$. If $h$ is a tangent display map of $(\X,\TT)$, then it is also a tangent display map of $(\X,\TT)/M$.
\begin{proof}
Let $n$ be a positive integer and let $a\colon Q\to\T^{(M)n}P$. We want to show that the pullback of $\T^{(M)n}h\colon\T^{(M)n}E\to\T^{(M)n}P$ along $a$ exists and that is a tangent pullback. Let us start by showing the existence. Since $h$ is a $\T$-display map, the pullback:
\begin{equation*}
\begin{tikzcd}
Z && {\T^nE} \\
Q & {\T^{(M)n}P} & {\T^nP}
\arrow[from=1-1, to=1-3]
\arrow[from=1-1, to=2-1]
\arrow["\lrcorner"{anchor=center, pos=0.125}, draw=none, from=1-1, to=2-2]
\arrow["{\T^nh}", from=1-3, to=2-3]
\arrow["a"', from=2-1, to=2-2]
\arrow[from=2-2, to=2-3]
\end{tikzcd}
\end{equation*}
exists and it is also preserved by every $\T^m$. However, we also have the existence of a dashed arrow as follows:
\begin{equation*}
\begin{tikzcd}
Z && {\T^nE} \\
Q & {\T^{(M)n}E} & {\T^nE} \\
{\T^{(M)n}P} & M & {\T^nM}
\arrow[from=1-1, to=1-3]
\arrow[from=1-1, to=2-1]
\arrow[dashed, from=1-1, to=2-2]
\arrow[Rightarrow, no head, from=1-3, to=2-3]
\arrow["a"', from=2-1, to=3-1]
\arrow[from=2-2, to=2-3]
\arrow[from=2-2, to=3-2]
\arrow["\lrcorner"{anchor=center, pos=0.125}, draw=none, from=2-2, to=3-3]
\arrow["{\T^nf}", from=2-3, to=3-3]
\arrow[from=3-1, to=3-2]
\arrow["{z^n}"', from=3-2, to=3-3]
\end{tikzcd}
\end{equation*}
Thus in turn gives a commutative diagram:
\begin{equation*}
\begin{tikzcd}
Z & {\T^{(M)n}E} & {\T^nE} \\
Q & {\T^{(M)n}P} & {\T^nP}
\arrow[from=1-1, to=1-2]
\arrow[bend left, from=1-1, to=1-3]
\arrow[from=1-1, to=2-1]
\arrow[from=1-2, to=1-3]
\arrow["{\T^{(M)n}h}"{description}, from=1-2, to=2-2]
\arrow["{\T^nh}", from=1-3, to=2-3]
\arrow["a"', from=2-1, to=2-2]
\arrow[from=2-2, to=2-3]
\end{tikzcd}
\end{equation*}
Since the outer and the right squares are pullbacks, by Lemma~\ref{lemma:pullback-lemma}, so is the left one. This shows the existence of the desired pullback. Now, we need to show that this pullback is preserved by every $\T^{(M)m}$. First, notice that the pullback is preserved by every $\T^m$. This is a direct consequence of $h$ being a tangent display map. So, we have that:
\begin{equation*}
\begin{tikzcd}
{\T^mZ} & {\T^m\T^{(M)n}E} \\
{\T^mQ} & {\T^{(M)n}P}
\arrow[from=1-1, to=1-2]
\arrow[from=1-1, to=2-1]
\arrow["\lrcorner"{anchor=center, pos=0.125}, draw=none, from=1-1, to=2-2]
\arrow["{\T^m\T^{(M)n}h}", from=1-2, to=2-2]
\arrow["{\T^ma}"', from=2-1, to=2-2]
\end{tikzcd}
\end{equation*}
is a pullback diagram. Consider $\alpha\colon X\to\T^{(M)m}Q$ and $\beta\colon X\to\T^{(M)m+n}E$ such that:
\begin{equation*}
\begin{tikzcd}
X && {\T^mZ} && {\T^m\T^{(M)n}E} \\
& {\T^{(M)m}Z} && {\T^{(M)m+n}E} \\
&& {\T^mQ} && {\T^{(M)n}P} \\
& {\T^{(M)m}Q} && {\T^{(M)m+n}P}
\arrow[bend left, dashed, from=1-1, to=1-3]
\arrow["\beta"', bend left, from=1-1, to=2-4]
\arrow["\alpha"', bend right, from=1-1, to=4-2]
\arrow[from=1-3, to=1-5]
\arrow[from=1-3, to=3-3]
\arrow["\lrcorner"{anchor=center, pos=0.125}, draw=none, from=1-3, to=3-5]
\arrow["{\T^m\T^{(M)n}h}", from=1-5, to=3-5]
\arrow[from=2-2, to=1-3]
\arrow[from=2-2, to=2-4]
\arrow[from=2-2, to=4-2]
\arrow[from=2-4, to=1-5]
\arrow[from=2-4, to=4-4]
\arrow["{\T^ma}"', from=3-3, to=3-5]
\arrow[from=4-2, to=3-3]
\arrow[from=4-2, to=4-4]
\arrow[from=4-4, to=3-5]
\end{tikzcd}
\end{equation*}
However, we also have:
\begin{equation*}
\begin{tikzcd}
X && {\T^mZ} \\
& {\T^{(M)m}Z} & {\T^mZ} \\
{\T^{(M)m}Q} & M & {\T^mM}
\arrow[from=1-1, to=1-3]
\arrow[dashed, from=1-1, to=2-2]
\arrow["\alpha"', from=1-1, to=3-1]
\arrow[Rightarrow, no head, from=1-3, to=2-3]
\arrow[from=2-2, to=2-3]
\arrow[from=2-2, to=3-2]
\arrow["\lrcorner"{anchor=center, pos=0.125}, draw=none, from=2-2, to=3-3]
\arrow[from=2-3, to=3-3]
\arrow[from=3-1, to=3-2]
\arrow[from=3-2, to=3-3]
\end{tikzcd}
\end{equation*}
We leave the reader to show that the dashed morphism is the unique morphism which satisfies the desired equations.
\end{proof}
\end{proposition}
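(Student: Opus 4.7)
My plan is to reduce the slice tangent display property back to the ambient tangent display property for $h$ in $(\X,\TT)$, using the pullback lemma (Lemma~\ref{lemma:pullback-lemma}) and the fact that $\T\^M$ is defined object-wise as a pullback of $\T$ along the zero morphism.

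The first step is to fix the data: for every $n\geq 0$ and every slice morphism $a\colon (q\colon Q\to M)\to \T\^{M,n}(g\colon P\to M)$, I need to build a slice pullback of $\T\^{M,n}h$ along $a$. Here $\T\^{M,n}E$ is iteratively defined by the pullback diagram of Equation~\eqref{equation:sliceability}, so there is a canonical arrow $\iota\^n_E\colon\T\^{M,n}E\to\T^nE$ that fits into a pullback square against $z^n\colon M\to\T^nM$. I will compose $a$ with $\iota\^n_P$ to get a morphism $Q\to\T^nP$ in the ambient category. Since $h$ is a tangent display map in $(\X,\TT)$, I may form the pullback $Z$ of $\T^nh\colon\T^nE\to\T^nP$ along this composite, and moreover this pullback is preserved by all iterates $\T^m$.

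Next, using the pullback defining $\T\^{M,n}E$ (the right-hand square in the diagram below), and the fact that the composite $Z\to\T^nE\to\T^nM$ agrees with $Z\to Q\to M\to\T^nM$ (by chasing around $a$ and using $qg = f$ through the slice constraints on the data), the universal property induces a dashed arrow $Z\to\T\^{M,n}E$:
\begin{equation*}
\begin{tikzcd}
Z & \T\^{M,n}E & \T^nE \\
Q & \T\^{M,n}P & \T^nP
\arrow[dashed, from=1-1, to=1-2]
\arrow[bend left, from=1-1, to=1-3]
\arrow[from=1-1, to=2-1]
\arrow[from=1-2, to=1-3]
\arrow["\T\^{M,n}h"{description}, from=1-2, to=2-2]
\arrow["\T^nh", from=1-3, to=2-3]
\arrow["a"', from=2-1, to=2-2]
\arrow["\iota\^n_P"', from=2-2, to=2-3]
\end{tikzcd}
\end{equation*}
The right square is a (tangent) pullback by definition of $\T\^{M,n}E$ (it is an iterated instance of~\eqref{equation:sliceability}), and the outer rectangle is a (tangent) pullback because $h$ is tangent display. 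Lemma~\ref{lemma:pullback-lemma} then gives that the left square is a pullback in $\X$, which in turn is a pullback in $(\X,\TT)/M$ since all arrows are canonically slice morphisms over $M$.

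To finish, I must also verify preservation by all iterates $(\T\^M)^m$. The strategy is the same one applied again: by the tangent display property of $h$, applying $\T^m$ preserves the outer rectangle, so $\T^mZ$ is the pullback of $\T^m\T^nh$ along $\T^ma\circ\T^m\iota\^n_P$ in $\X$. Given any test morphisms $\alpha\colon X\to(\T\^M)^mQ$ and $\beta\colon X\to(\T\^M)^{m+n}E$ compatible with $\T\^{M,n}h$ and $a$, I compose with the canonical maps into $\T^mQ$ and $\T^{m+n}E$, use uniqueness from the ambient $\T^m$-pullback to land in $\T^mZ$, and then use the defining pullback for $(\T\^M)^m Z$ (fitted against $M$ via $z^m$) to induce the desired unique slice arrow $X\to(\T\^M)^mZ$. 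The main obstacle throughout is purely bookkeeping: keeping track of the canonical comparisons $\iota\^n$ and checking that the two sides of each universal property genuinely agree over $\T^nM$ (respectively $\T^{n+m}M$), which follows from $a$ and $\beta$ being slice morphisms and from naturality of $z$. Since this is essentially a repetition of the argument above, I leave the detailed diagram chase to the reader, as the author does.
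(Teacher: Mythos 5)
Your proposal is correct and follows essentially the same route as the paper: form the ambient $\T$-pullback $Z$ of $\T^nh$ along $a\iota^{(n)}_P$, induce the comparison into $\T^{(M)n}E$ via the defining pullback of the slice tangent functor, conclude with Lemma~\ref{lemma:pullback-lemma}, and handle preservation by $(\T^{(M)})^m$ by factoring test maps through the $\T^m$-preserved ambient pullback. The only quibble is the typo $qg=f$ (it should be $hg=f$), which does not affect the argument.
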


\subsection{Reverse tangent categories}
\label{subsection:reverse-tangent-category}
In the next two sections, we briefly consider how tangent display maps can be used to give alternate (simpler) formulations of a couple of other notions in tangent category theory.  As noted in the introduction, in a reverse tangent category, one needs a fibration of differential bundles on which to define an appropriate ``involution'' operation. In the paper which defines the notion, the authors thus ask for a system of differential bundles, that is a tangent display system of differential bundles, and ask for an involution operation on the corresponding fibration \cite[Defn. 23]{cruttwell:reverse-tangent-cats}  

However, this is additional structure which should not really be necessary, and is potentially a bit awkward to ``carry around'' as one works with reverse tangent categories. A much more natural choice is then to simply ask that a reverse tangent category be a tangent category which has an involution operation with respect to the tangent fibration of \emph{display} differential bundles.

\subsection{Linear completeness and curve objects}
\label{subsection:linear-completeness}
In \cite{cockett:differential-equations}, the authors define a ``linear curve object'' to be an object which gives unique solutions for differential equations, and also asks that solutions exist for linear differential equations. One of the issues with this definition, however, is the question of when solutions of linear equations carry over to related tangent categories. In particular, the authors want to determine whether or not a tangent category $\X$ has solutions for linear differential equations, then so does its tangent category of vector fields.

To do this, the authors focus on a particular class of ``endemic fibre products'', and look at differential bundles whose pullbacks lie in this class. Like in the previous section, this is then extra structure which has to be ``carried around''.

However, just as in the previous section, a natural choice is to simply consider the endemic fibre products to be the class of ``all pullbacks of tangent display maps'', and ask for linear completeness with respect to the resulting class of display differential bundles. All the results of \cite[Section 5.5]{cockett:differential-equations} then hold for this class, and again do not require any extra structure.

\subsection{Restriction tangent structures}
\label{subsection:restriction}
As a final application, in this section we look at the interaction between display maps in a tangent category and restriction structure.  In particular, we show that a certain class of tangent display maps allows one to build a canonical (tangent) restriction category out of any tangent category.

A restriction category consists of a category for which each morphism $f\colon A\to B$ comes equipped with an idempotent $\bar f\colon A\to A$, called the restriction idempotent of $f$. This structure abstracts the category of partial maps between sets in which the idempotent $\bar f$ of such a morphism is the partial map which coincides with the identity where $f$ is defined and is undefined elsewhere. In particular, restriction categories provide a simple equational theory for partial maps in an abstract category. For details, we suggest the reader to consult~\cite{cockett:restriction-categories-I}.

\par In~\cite{cockett:tangent-cats}, Cockett and the first author extended the theory of restriction categories to the realm of tangent categories, introducing the notion of a restriction tangent category (cf.~\cite[Definition~6.14]{cockett:tangent-cats}). In particular, a restriction tangent category is a restriction category equipped with a structure similar to a tangent structure, in which the tangent bundle functor preserves the restriction idempotents, the structural natural transformations are total natural transformations, i.e., natural transformations with trivial restriction idempotents, and for which the $n$-fold pullback of the projection along itself and the pullback of the universal property of the vertical lift are replaced with restriction pullbacks.

\par Cockett and Lack in~\cite{cockett:restriction-categories-I}, showed a $2$-equivalence between split restriction categories, i.e., restriction categories with splitting restriction idempotents, and categories equipped with a \textit{display system of monics}. Let us briefly recall this construction.

\begin{definition}
\label{definition:M-categories}
A \textbf{display system of monics} of a category $\X$ consists of a collection $\M$ of monomorphisms of $\X$ satisfying the following conditions:
\begin{itemize}
\item $\M$ forms a display system of $\X$;

\item $\M$ is closed under composition;

\item $\M$ contains all isomorphisms of $\X$. 
\end{itemize}
A category $\X$ equipped with a display system of monics is an \textbf{$\M$-category}.
\end{definition}

\begin{remark}
\label{remark:naming-display-system-monics}
In~\cite{cockett:restriction-categories-I}, a display system of monics is called a \textit{stable} system of monics.
\end{remark}

\begin{definition}
\label{definition:M-functors}
An \textbf{$\M$-functor} from an $\M$-category $(\X,\M)$ to an $\M$-category $(\X',\M')$ consists of a functor $F\colon\X\to\X'$ which sends each monic of $\M$ to a monic of $\M'$ and which preserves each pullback diagram of type:
\begin{equation*}
\begin{tikzcd}
D & B \\
C & A
\arrow[from=1-1, to=1-2]
\arrow[from=1-1, to=2-1]
\arrow["\lrcorner"{anchor=center, pos=0.125}, draw=none, from=1-1, to=2-2]
\arrow["m", from=1-2, to=2-2]
\arrow["f"', from=2-1, to=2-2]
\end{tikzcd}
\end{equation*}
where $m$ is a monic of $\M$.
\end{definition}

\begin{definition}
\label{definition:M-natural-transformation}
An \textbf{$\M$-natural transformation} from an $\M$-functor $F\colon(\X,\M)\to(\X',\M')$ to an $\M$-functor $G\colon(\X,\M)\to(\X',\M')$ consists of a natural transformation $\varphi\colon F\Rightarrow G$ for which, for every monic $m$ of $\M$, the naturality diagram:
\begin{equation}
\label{equation:M-natural-transformation}
\begin{tikzcd}
FA & GA \\
FB & GB
\arrow["\varphi", from=1-1, to=1-2]
\arrow["Fm"', from=1-1, to=2-1]
\arrow["Gm", from=1-2, to=2-2]
\arrow["\varphi"', from=2-1, to=2-2]
\end{tikzcd}
\end{equation}
is a pullback diagram.
\end{definition}

\begin{remark}
\label{remark:M-natural-transformations}
Note that the pullback diagram of Equation~\eqref{equation:M-natural-transformation} is preserved by every $\M$-functor from $(\X',\M')$, since $Gm$ is a monic in $\M'$.
\end{remark}

$\M$-categories, $\M$-functors, and $\M$-natural transformations form a $2$-category denoted by $\MCat$. Split restriction categories, restriction functors, i.e., functors which preserve the restriction idempotents, and total natural transformations also form a $2$-category denoted by $\sRestrCat$. Cockett and Lack proved that $\MCat$ and $\sRestrCat$ are $2$-equivalent.
\par In particular, each $\M$-category $(\X,\M)$ is sent to a split restriction category $\Par(\X,\M)$ whose objects are the same as the one of $\X$ and whose morphisms $f\colon A\nto B$ are classes of isomorphisms of spans:
\begin{equation*}
\begin{tikzcd}
& {\bar A} \\
A && B
\arrow["{m_f}"', from=1-2, to=2-1]
\arrow["{[f]}", from=1-2, to=2-3]
\end{tikzcd}
\end{equation*}
for which the leg $m_f$ is a monic of $\M$.

\par Our first goal of this section is to extend this result to split restriction tangent categories. The first step is to introduce the correct notion of a display system of monics in this context.

\begin{definition}
\label{definition:M-tangent-category}
A \textbf{tangent display system of monics} of a tangent category $(\X,\TT)$ consists of a collection $\M$ of morphisms of $(\X,\TT)$ satisfying the following conditions:
\begin{itemize}

\item Each element of $\M$ is a monomorphism;

\item Each element of $\M$ is an \'etale map of $(\X,\TT)$;

\item $\M$ forms a tangent display system of $(\X,\TT)$;

\item $\M$ is closed under composition;

\item $\M$ contains all isomorphisms of $(\X,\TT)$.
\end{itemize}
A tangent category equipped with a tangent display system of monics is called an \textbf{$\M$-tangent category}.
\end{definition}

\begin{remark}
\label{remark:tangent-display-system-monics}
We shall soon see why the \'etale requirement is important.  Also, it is worth noting that each element of a tangent display system of monics is automatically a \emph{display} \'etale map since every morphism of a tangent display system is a tangent display map by Theorem~\ref{theorem:T-display-maps-form-maximal-T-display-system}.
\end{remark}

\begin{definition}
\label{definition:M-tangent-functor}
A \textbf{lax $\M$-tangent morphism} from an $\M$-tangent category $(\X,\TT;\M)$ to an $\M$-tangent category $(\X',\TT';\M')$ consists of a lax tangent morphism $(F,\alpha)\colon(\X,\TT)\to(\X',\TT')$, i.e., a functor $F\colon\X\to\X'$ together with a distributive law $\alpha\colon F\o\T\Rightarrow\T'\o F$ compatible with the tangent structures, whose underlying functor $F$ is an $\M$-functor and whose distributive law $\alpha$ is an $\M$-natural transformation.
\end{definition}

\begin{definition}
\label{definition:M-tangent-natural-transformation}
An \textbf{$\M$-tangent natural transformation} from an $\M$-lax tangent morphism $(F,\alpha)\colon(\X,\TT;\M)\to(\X',\TT';\M')$ to an $\M$-lax tangent morphism $(G,\beta)\colon(\X,\TT;\M)\to(\X',\TT';\M')$ consists of a tangent natural transformation $\varphi\colon(F,\alpha)\Rightarrow(G,\beta)$, i.e., a natural transformation $\varphi\colon F\Rightarrow G$ compatible with the distributive laws $\alpha$ and $\beta$, for which, for every monic $m$ of $\M$, the naturality diagram:
\begin{equation*}
\begin{tikzcd}
FA & GA \\
FB & GB
\arrow["\varphi", from=1-1, to=1-2]
\arrow["Fm"', from=1-1, to=2-1]
\arrow["Gm", from=1-2, to=2-2]
\arrow["\varphi"', from=2-1, to=2-2]
\end{tikzcd}
\end{equation*}
is a tangent pullback diagram. 
\end{definition}

$\M$-tangent categories, lax $\M$-tangent functors, and $\M$-tangent natural transformations form a $2$-category denoted by $\MTngCat$.

\begin{lemma}
\label{lemma:etale-maps}
A morphism $q\colon E\to M$ of a tangent category $(\X,\TT)$ is \'etale if and only if the naturality diagrams of $q$ with all of the structural natural transformations of $\TT$ are tangent pullback diagrams.
\begin{proof}
By definition, a morphism $q\colon E\to M$ is an \'etale map if the naturality square of $q$ with the projection is a tangent pullback. Therefore, we only need to prove that if $q$ is \'etale, all the naturality squares of $q$ with each of the structural natural transformations is a tangent pullback diagram. Let us start with the zero morphism and consider the following diagram:
\begin{equation*}
\begin{tikzcd}
E & {\T E} & E \\
M & {\T M} & M
\arrow["z", from=1-1, to=1-2]
\arrow["q"', from=1-1, to=2-1]
\arrow["p", from=1-2, to=1-3]
\arrow["{\T q}"{description}, from=1-2, to=2-2]
\arrow["\lrcorner"{anchor=center, pos=0.125}, draw=none, from=1-2, to=2-3]
\arrow["q", from=1-3, to=2-3]
\arrow["z"', from=2-1, to=2-2]
\arrow["p"', from=2-2, to=2-3]
\end{tikzcd}
\end{equation*}
However, by assumption, the right square is a tangent pullback. Furthermore, since $zp=\id_\X$ also the outer square is a tangent pullback. By Lemma~\ref{lemma:pullback-lemma}, also the left square is a tangent pullback diagram. Let us now consider the diagrams:
\begin{equation*}
\begin{tikzcd}
{\T E} & {\T^2E} & {\T E} \\
{\T M} & {\T^2M} & {\T M}
\arrow["l", from=1-1, to=1-2]
\arrow["{\T q}"', from=1-1, to=2-1]
\arrow["{p_\T}", from=1-2, to=1-3]
\arrow["{\T^2q}"{description}, from=1-2, to=2-2]
\arrow["\lrcorner"{anchor=center, pos=0.125}, draw=none, from=1-2, to=2-3]
\arrow["{\T q}", from=1-3, to=2-3]
\arrow["l"', from=2-1, to=2-2]
\arrow["{p_\T}"', from=2-2, to=2-3]
\end{tikzcd}\hfill
\begin{tikzcd}
{\T^2E} & {\T^2E} & {\T E} \\
{\T^2M} & {\T^2M} & {\T M}
\arrow["c", from=1-1, to=1-2]
\arrow["{\T^2q}"', from=1-1, to=2-1]
\arrow["{p_\T}", from=1-2, to=1-3]
\arrow["{\T^2q}"{description}, from=1-2, to=2-2]
\arrow["\lrcorner"{anchor=center, pos=0.125}, draw=none, from=1-2, to=2-3]
\arrow["{\T q}", from=1-3, to=2-3]
\arrow["c"', from=2-1, to=2-2]
\arrow["{p_\T}"', from=2-2, to=2-3]
\end{tikzcd}
\end{equation*}
One can notice that the right squares are tangent pullbacks. Furthermore, $lp_\T=pz$ and $cp_\T=\T p$, thus, using that the naturality square of $q$ with $z$ is a tangent pullback, also the outer squares are tangent pullbacks. So, again by Lemma~\ref{lemma:pullback-lemma}, so are the left squares. Finally, let us prove it for the sum morphism. Let us consider the diagram:
\begin{equation}
\label{equation:sum-naturality}
\begin{tikzcd}
{\T_2E} & {\T E} & E \\
{\T_2M} & {\T M} & M
\arrow["s", from=1-1, to=1-2]
\arrow["{\T_2q}"', from=1-1, to=2-1]
\arrow["p", from=1-2, to=1-3]
\arrow["{\T q}"{description}, from=1-2, to=2-2]
\arrow["\lrcorner"{anchor=center, pos=0.125}, draw=none, from=1-2, to=2-3]
\arrow["q", from=1-3, to=2-3]
\arrow["s"', from=2-1, to=2-2]
\arrow["p"', from=2-2, to=2-3]
\end{tikzcd}
\end{equation}
Notice that $sp=\pi_1p=\pi_2p$. Consider two morphisms $f\colon X\to\T_2 M$ and $g\colon X\to E$ satisfying $f\pi_1p=g\T q$. We find a morphism $h\colon X\to\T E$ such that:
\begin{equation*}
\begin{tikzcd}
X \\
& {\T_2E} & {\T E} & E \\
& {\T_2M} & {\T M} & M
\arrow["h"{description}, bend left, dashed, from=1-1, to=2-3]
\arrow["g", bend left=30, from=1-1, to=2-4]
\arrow["f"', bend right=30, from=1-1, to=3-2]
\arrow["{\pi_1}", from=2-2, to=2-3]
\arrow["{\T_2q}"', from=2-2, to=3-2]
\arrow["p", from=2-3, to=2-4]
\arrow["{\T q}"{description}, from=2-3, to=3-3]
\arrow["\lrcorner"{anchor=center, pos=0.125}, draw=none, from=2-3, to=3-4]
\arrow["q", from=2-4, to=3-4]
\arrow["{\pi_1}"', from=3-2, to=3-3]
\arrow["p"', from=3-3, to=3-4]
\end{tikzcd}
\end{equation*}
Let:
\begin{align*}
&k\colon X\xrightarrow{h}\T E\xrightarrow{\Delta}\T_2E
\end{align*}
where $\Delta=\<\id_{\T E},\id_{\T E}\>$ is the diagonal map. It is immediate to see that:
\begin{align*}
&k\pi_1=h\Delta\pi_1=h
\end{align*}
Moreover, notice that, since $\pi_1p=\pi_2p$, we have that:
\begin{align*}
&f\pi_1=h\T q=f\pi_2
\end{align*}
Therefore:
\begin{align*}
&k\T_2q=h\Delta\T_2q=h\T q\Delta=f\pi_1\Delta=f\pi_2\Delta=f
\end{align*}
Now, let us suppose that $k'\colon X\to\T_2E$ satisfies $k'\pi_1=h$ and $k'\T_2q=f$. First, notice that:
\begin{align*}
&k'\pi_2p=k'\pi_1p=hp=g\\
&k'\pi_2\T q=k'\T_2q\pi_2=f\pi_2=f\pi_1
\end{align*}
Thus, also $k'\pi_2=h$. Therefore:
\begin{align*}
&k'\Delta=\<k'\pi_1,k'\pi_2\>=\<h,h\>=h\Delta=k
\end{align*}
Similarly, one can prove that such a pullback is preserved by all iterates of $\T$. Thus, the outer square of Equation~\eqref{equation:sum-naturality} is a tangent pullback and thus, by Lemma~\ref{lemma:pullback-lemma}, so is the left square.
\end{proof}
\end{lemma}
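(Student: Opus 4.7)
The backward direction is immediate, since the naturality square of $q$ with $p$ being a tangent pullback is the very definition of \'etale. For the forward direction, assume that $q$ is \'etale. The strategy is a chain of applications of Lemma~\ref{lemma:pullback-lemma} (the pasting lemma for tangent pullbacks), exploiting the identities that express each structural natural transformation in terms of $p$.

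For $z\colon \id_\X\Rightarrow\T$: since $z p = \id$, pasting the naturality square of $z$ to the left of the naturality square of $p$ produces an outer rectangle whose horizontal composite is the identity and hence trivially a tangent pullback. The right square is a tangent pullback by the \'etale assumption, so the pasting lemma yields the same for the left. For $l\colon\T\Rightarrow\T^2$ and $c\colon\T^2\Rightarrow\T^2$: use the tangent-category axioms $l p_\T = p z$ and $c p_\T = \T p$. The right square in each case is the naturality of $p_\T$, which is $\T$ applied to the naturality of $p$ and is thus a tangent pullback; the outer rectangle simplifies, via these identities, to a pasting of previously-established tangent pullback squares. Two more invocations of the pasting lemma conclude these cases.

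The subtle case is $s\colon\T_2\Rightarrow\T$, whose source $\T_2 E$ is itself a pullback. The useful identity is $s p = \pi_1 p$. Pasting the naturality of $s$ with the naturality of $p$ produces an outer rectangle that coincides with the one obtained by pasting the naturality of $\pi_1\colon\T_2\Rightarrow\T$ with that of $p$. It therefore suffices to show that the naturality square of $\pi_1$ along $q$ is a tangent pullback. This is the step that requires genuine work: given compatible test maps $f\colon X\to\T_2 M$ and $g\colon X\to\T E$ with $f\pi_1 = g\T q$, the required factorisation $h\colon X\to\T_2 E$ is determined by $h\pi_1 = g$ together with a second coordinate $h\pi_2\colon X\to\T E$ satisfying $h\pi_2\cdot p = g p$ and $h\pi_2\cdot\T q = f\pi_2$; the existence and uniqueness of $h\pi_2$ follow from the \'etale hypothesis on $q$ applied to the data $(f\pi_2, gp)$, whose compatibility is extracted from the pullback defining $\T_2 M$. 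Preservation of this factorisation by all $\T^n$ is automatic, since $\T^n$ preserves both the naturality of $p$ and the defining pullbacks of $\T_2$. The pasting lemma then delivers the naturality of $s$ as a tangent pullback.

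The principal obstacle is thus isolated in the $s$ case: extracting a tangent-pullback property for the naturality of $\pi_1$ from the hypothesis on $p$ alone, via the pullback defining $\T_2 E$ and a second invocation of the \'etale assumption. Once this is in hand, the remaining verifications are routine diagram chases using Lemma~\ref{lemma:pullback-lemma}.
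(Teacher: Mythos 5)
Your argument is correct, and for the zero, lift and flip cases it follows the paper's proof essentially verbatim: paste the naturality square of the transformation with the naturality square of the projection, identify the outer rectangle using $zp=\id$, $lp_\T=pz$, $cp_\T=\T p$, and apply Lemma~\ref{lemma:pullback-lemma}. One small imprecision there: the right-hand square with horizontal arrows $p_\T$ is the naturality of $p$ at the morphism $\T q$, which is \emph{not} literally $\T$ applied to the naturality of $p$ at $q$ (that square has horizontal arrows $\T p$); the two differ by the canonical flip, and since $c$ is a natural isomorphism with $cp_\T=\T p$ the conclusion still holds, but this deserves a sentence (the paper is equally terse here). Where you genuinely diverge is the sum case, and your route is the better one. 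The paper verifies the universal property of the outer rectangle directly, producing the comparison map as $h\Delta$ for a single $h$ obtained from the \'etale square applied to $(f\pi_1,g)$; this forces the two components of the lift to coincide and rests on the claim $f\pi_1=f\pi_2$, which fails for a general $f\colon X\to\T_2M$ (take $f=\id_{\T_2M}$). Your argument instead shows that the naturality square of $\pi_1\colon\T_2\Rightarrow\T$ at $q$ is a tangent pullback by invoking the \'etale hypothesis once per component --- obtaining the second coordinate $h\pi_2$ from the data $(f\pi_2,gp)$, whose compatibility $f\pi_2p=gpq$ follows from $\pi_1p=\pi_2p$ and naturality of $p$ --- and then pastes with the $p$-square using $sp=\pi_1p$. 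This costs two applications of the \'etale universal property where the paper uses one, and it is exactly what is needed to repair the paper's construction; preservation under $\T^n$ goes through as you say, since both the \'etale square and the defining pullback of $\T_2$ are tangent pullbacks.
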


\begin{lemma}
\label{lemma:structural-natural-transformations-are-M}
The structural natural transformations of an $\M$-tangent category are $\M$-natural transformations.
\begin{proof}
By assumption, a monic $m$ of a tangent display system is an \'etale map. By Lemma~\ref{lemma:etale-maps}, the naturality squares of the structural natural transformations of $\TT$ with $m$ are tangent pullbacks.
\end{proof}
\end{lemma}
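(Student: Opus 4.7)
The plan is to unpack the definition of an $\M$-natural transformation and then reduce the claim to Lemma~\ref{lemma:etale-maps}. Recall that a natural transformation $\varphi\colon F\Rightarrow G$ between $\M$-functors is $\M$-natural precisely when, for every monic $m\colon A\to B$ in $\M$, the naturality square
\[
\begin{tikzcd}
FA & GA \\
FB & GB
\arrow["\varphi", from=1-1, to=1-2]
\arrow["Fm"', from=1-1, to=2-1]
\arrow["Gm", from=1-2, to=2-2]
\arrow["\varphi"', from=2-1, to=2-2]
\end{tikzcd}
\]
is a pullback. So I need to verify this condition for each of the structural natural transformations $p,z,s,l,c$ (and $n$, if there are negatives), taken with each $m\in\M$.

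The key observation is that the roles of $m$ and the structural transformation are interchangeable in the naturality square: the square expressing naturality of $p$ (say) at $m$ is identical to the square expressing naturality of $m$ at $p$. By the definition of a tangent display system of monics (Definition~\ref{definition:M-tangent-category}), every $m\in\M$ is an \'etale map. Lemma~\ref{lemma:etale-maps} then states that the naturality squares of $m$ with \emph{all} structural natural transformations of $\TT$ are tangent pullbacks, hence in particular pullbacks. This immediately supplies the pullback condition required.

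Thus the full argument is essentially a one-liner: pick any structural natural transformation $\tau$ and any $m\in\M$; since $m$ is \'etale, Lemma~\ref{lemma:etale-maps} yields that the naturality square of $\tau$ with $m$ is a (tangent) pullback, which is the $\M$-naturality condition for $\tau$. There is no real obstacle here; the nontrivial content was already absorbed into Lemma~\ref{lemma:etale-maps}, whose proof does the actual work of bootstrapping \'etaleness (defined only for the projection $p$) to the remaining structural transformations. In effect, one is trading a potentially tedious case-by-case verification for an appeal to that earlier lemma, and the only thing to check is that every hypothesis of Lemma~\ref{lemma:etale-maps} is built into the definition of an $\M$-tangent category, which it is.
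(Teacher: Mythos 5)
Your argument is correct and is essentially identical to the paper's proof: both observe that every $m\in\M$ is \'etale by Definition~\ref{definition:M-tangent-category} and then invoke Lemma~\ref{lemma:etale-maps} to get that all the relevant naturality squares are tangent pullbacks. The only addition is your explicit remark that the naturality square of a structural transformation at $m$ coincides with the naturality square of $m$ at that transformation, which the paper leaves implicit.
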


Since each tangent display system of monics is a display system of monics, given an $\M$-tangent category $(\X,\TT;\M)$,  $\Par(\X,\M)$ defines a split restriction category.

\begin{lemma}
\label{lemma:split-restriction-tangent-cats}
For an $\M$-tangent category $(\X,\TT;\M)$, the split restriction category $\Par(\X,\M)$ comes equipped with a restriction tangent structure.
\begin{proof}
To equip $\Par(\X,\M)$ with a restriction tangent structure, start by noticing that the $2$-functor $\Par$ sends the tangent bundle functor $\T\colon\X\to\X$ to a restriction functor and each $\M$-natural transformation to a total natural transformation. By Lemma~\ref{lemma:structural-natural-transformations-are-M}, all structural natural transformations of $\TT$ are $\M$-natural transformations. Thus, $\Par$ sends them to total natural transformations.
\par To prove that the resulting structure $\Par(\X,\M)[\TT]$ forms a restriction tangent structure on $\Par(\X,\M)$, first notice that all equational axioms only involve the structural natural transformations and thus are a direct consequence of the functoriality of $\Par$.
\par So, we just need to prove the axioms which involve the existence of restriction pullback diagrams, i.e., the existence of the restriction $n$-fold pullback of the projection along itself, and the universality of the vertical lift. In~\cite{cockett:restriction-categories-III-colims}, Cockett and Lack showed that in a split restriction category, restriction limits coincide with ordinary limits on total maps. By specializing their result for restriction pullbacks, we conclude that the restriction $n$-fold pullback of the projection along itself (recall that the projection is total) exists and the universal property of the vertical lift (which also involves only total maps) holds.
\end{proof}
\end{lemma}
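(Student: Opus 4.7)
The strategy is to leverage the Cockett--Lack 2-equivalence between $\M$-categories and split restriction categories and to transport the tangent structure $\TT$ through the 2-functor $\Par$. A restriction tangent structure consists of (i) an endofunctor compatible with restrictions, (ii) a family of total natural transformations satisfying the usual tangent equations, and (iii) two pullback-style axioms (the $n$-fold pullback of the projection, and the universality of the vertical lift) interpreted as restriction pullbacks. My plan is to verify each component in turn, using the 2-functoriality of $\Par$ to get (i) and (ii) almost for free, and reducing (iii) to ordinary pullbacks in $\X$ via a theorem of Cockett--Lack.

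First I would argue that $\T\colon\X\to\X$ is an $\M$-functor. Since $\M$ is a tangent display system, $\T$ sends each monic $m\in\M$ to $\T m\in\M$; this is still a monic because $\M$ by definition consists of monics. Moreover, every $m\in\M$ is a tangent display map (by Theorem~\ref{theorem:T-display-maps-form-maximal-T-display-system}), so pullbacks of $m$ along arbitrary maps exist and are preserved by $\T$, which is exactly the pullback-preservation condition required of an $\M$-functor. Applying $\Par$ therefore produces a restriction functor on $\Par(\X,\M)$, which I take as the candidate tangent bundle functor.

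Next I would invoke Lemma~\ref{lemma:structural-natural-transformations-are-M} to conclude that each of the structural natural transformations of $\TT$ (the projection, the zero, the sum, the vertical lift, the canonical flip, and the negation when present) is an $\M$-natural transformation. Hence $\Par$ sends each of these to a total natural transformation of $\Par(\X,\M)$. All of the purely equational axioms of a tangent structure concern composites of these natural transformations and iterates of $\T$, so they hold in $\Par(\X,\M)[\TT]$ by 2-functoriality of $\Par$. This reduces the problem to verifying the two pullback-style axioms.

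The main obstacle is this last step, and my plan is to invoke the result of Cockett and Lack in~\cite{cockett:restriction-categories-III-colims} that in a split restriction category a restriction limit of a diagram of total maps coincides with the ordinary limit computed in the subcategory of total maps. Since the projection, the zero, the sum, and the vertical lift are total in $\Par(\X,\M)$ (being images under $\Par$ of ordinary morphisms of $\X$), both required diagrams are diagrams of total maps, and the relevant limits exist in $\X$ by the tangent structure hypothesis on $(\X,\TT)$. The Cockett--Lack theorem then lifts these limits to restriction pullbacks in $\Par(\X,\M)$, completing the transport of $\TT$ to a restriction tangent structure on $\Par(\X,\M)$.
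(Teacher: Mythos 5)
Your proposal is correct and follows essentially the same route as the paper's proof: transport $\T$ and the structural transformations through $\Par$ using Lemma~\ref{lemma:structural-natural-transformations-are-M}, dispose of the equational axioms by $2$-functoriality, and reduce the two pullback-style axioms to ordinary limits of total maps via the Cockett--Lack result from~\cite{cockett:restriction-categories-III-colims}. The only difference is that you spell out why $\T$ is an $\M$-functor (stability of $\M$ under $\T$ and preservation of pullbacks along tangent display maps), a detail the paper leaves implicit.
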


Lemma~\ref{lemma:split-restriction-tangent-cats} only shows half of the story: the $2$-category of $\M$-tangent categories and split restriction tangent categories are indeed $2$-equivalent. Our proof of this statement is based on the formal theory of tangent objects which constitutes a formal approach to tangent category theory. Since this goes far beyond the scope of this paper, we decided to only state this result here and postpone the proof to a future paper.

\begin{theorem}
\label{theorem:equivalence-split-restriction-tangent-categories}
The assignment which sends an $\M$-tangent category $(\X,\TT;\M)$ to the split restriction tangent category $\Par(\X,\TT;\M)$ defined in Lemma~\ref{lemma:split-restriction-tangent-cats} extends to a $2$-equivalence between the $2$-category of $\M$-tangent categories and the 2-category of split restriction tangent categories.
\end{theorem}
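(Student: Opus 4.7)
The plan is to extend the Cockett-Lack 2-equivalence $\Par\colon\MCat\to\sRestrCat$ from~\cite{cockett:restriction-categories-I} to the tangent setting, exploiting the fact that a tangent structure can itself be presented as $2$-algebraic data transporting along such a $2$-equivalence. Lemma~\ref{lemma:split-restriction-tangent-cats} already constructs the forward assignment on objects; what remains is to build the reverse assignment, upgrade both directions to $2$-functors, and verify the triangle equivalences.

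First, I would make the reverse direction explicit. Given a split restriction tangent category $(\CC,\TT^r)$, take the wide subcategory $\CC_t$ of total morphisms together with the class $\M$ of \emph{open inclusions}: the monics $m_e\colon\Inc e\to A$ arising as the section leg of a splitting of a restriction idempotent $e\colon A\to A$. Cockett-Lack established that $(\CC_t,\M)$ is an $\M$-category and that $\Par(\CC_t,\M)$ is canonically equivalent to $\CC$. The tangent-specific verifications are then: (i) the tangent bundle functor $\T^r$, being a restriction functor, preserves restriction idempotents and hence their splittings, so it restricts to an endofunctor $\T$ on $\CC_t$ that preserves $\M$; (ii) the structural transformations of $\TT^r$ are total, hence restrict to natural transformations on $\CC_t$; (iii) every open inclusion is an \'etale map of $(\CC_t,\T)$, since its naturality squares with the structural transformations are restriction pullbacks in $\CC$ (by~\cite{cockett:restriction-categories-III-colims}) and therefore genuine $\T$-pullbacks between total maps along $\M$, which by Lemma~\ref{lemma:etale-maps} suffices; and (iv) the $n$-fold pullback of the projection and the universality-of-lift pullback transfer from restriction pullbacks in $\CC$ to $\T$-pullbacks in $\CC_t$ along $\M$. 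Together these exhibit an $\M$-tangent structure on $(\CC_t,\M)$ in the sense of Definition~\ref{definition:M-tangent-category}.

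For $2$-functoriality, a restriction-preserving lax tangent morphism $(F,\alpha)$ between split restriction tangent categories corresponds under Cockett-Lack to an $\M$-functor equipped with an $\M$-natural distributive law, which is precisely a lax $\M$-tangent morphism in the sense of Definition~\ref{definition:M-tangent-functor}; Lemma~\ref{lemma:structural-natural-transformations-are-M} ensures that the hexagon/pentagon coherences involving $\alpha$ survive transport, since the structural transformations are already $\M$-natural on one side and total on the other. The same analysis applies to $2$-cells. The two triangle equivalences at the tangent-structure level then reduce to the Cockett-Lack equivalences, because the tangent structures on each side are mutually determined by transport along the underlying $2$-equivalence.

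The main obstacle is the careful dictionary between restriction pullbacks in $\CC$ and $\T$-pullbacks along $\M$ in $\CC_t$: one must check that the pullback data entering the tangent axioms (both preservation by $\T^n$ and universality) correspond faithfully under the equivalence, and that the \'etale requirement in Definition~\ref{definition:M-tangent-category} is exactly what is forced on the $\M$-side by the requirement that $\T^r$ be a restriction functor on the restriction side. This is the technical content that the formal theory of tangent objects encapsulates cleanly via a $2$-equivalence of categories of \emph{tangent objects}, but it can equivalently be carried out by hand by expanding the definitions and invoking the restriction-limit-to-$\M$-limit correspondence of~\cite{cockett:restriction-categories-III-colims}.
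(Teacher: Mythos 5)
The paper does not actually prove this theorem: it explicitly states that the proof ``is based on the formal theory of tangent objects'' and is postponed to a future paper, so there is no in-paper argument to compare against. Your proposal is the natural elementary alternative --- transporting the tangent structure by hand across the Cockett--Lack $2$-equivalence between $\MCat$ and $\sRestrCat$ rather than invoking a formal $2$-categorical transport theorem --- and its overall architecture (explicit quasi-inverse on objects via the subcategory of total maps with $\M$ the splitting monics of restriction idempotents, then $1$-cells, $2$-cells, and triangle identities reduced to the underlying Cockett--Lack equivalence) is the right one.

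That said, two of your tangent-specific verifications are under-justified, and they are precisely the points where the theorem has real content. First, your step (iii): the claim that the naturality squares of an open inclusion with the structural transformations ``are restriction pullbacks in $\CC$ by~\cite{cockett:restriction-categories-III-colims}'' is not an axiom of restriction tangent categories nor a consequence of that reference; the correct mechanism is that a splitting monic $m$ of a restriction idempotent is a \emph{partial isomorphism} (a restriction monic), that $\T^r$, being a restriction functor, preserves partial inverses, and that one then verifies the pullback property of the $p$-naturality square directly by composing with $\T r$ (where $r$ is the partial inverse of $m$) and checking totality of the induced map; only after establishing the $p$-square does Lemma~\ref{lemma:etale-maps} deliver the remaining structural squares. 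Second, Cockett--Lack give only that $(\CC_t,\M)$ is an $\M$-\emph{category}, i.e.\ a $1$-categorical display system of monics; Definition~\ref{definition:M-tangent-category} additionally requires $\M$ to be a \emph{tangent} display system, so you must show that pullbacks of arbitrary total maps along splitting monics are preserved by every $\T^n$. This is not addressed in your sketch; it again follows from the equational description of such pullbacks as splittings of restriction idempotents of the form $\overline{fr}$, which $\T^r$ preserves, but it is a separate check from anything in the restriction-limits literature you cite. With those two arguments supplied, and with the routine (but lengthy) coherence checks for $1$- and $2$-cells written out, your route would constitute a complete proof, more elementary than the formal-theory argument the authors defer to.
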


In Definition~\ref{definition:M-tangent-category}, we assumed the monics of a tangent display system to be \'etale maps. This was a crucial assumption to prove that the structural natural transformations of an $\M$-tangent category are $\M$-natural transformations (Lemma~\ref{lemma:structural-natural-transformations-are-M}). We also mentioned in Remark~\ref{remark:tangent-display-system-monics} that such monics are automatically display \'etale maps. This suggests looking at tangent monic display \'etale maps. It turns out that such maps classify open subsets of smooth manifolds in differential geometry.
\par First, let us introduce the notion of a tangent monomorphism.

\begin{definition}
\label{definition:tangent-monic}
In a category $\X$ equipped with an endofunctor $\T$, a \textbf{$\T$-monic} is a monomorphism $m\colon A\to B$ of $\X$ such that, for each $n>0$, $\T^nm$ is still a monomorphism. When $\T$ represents the tangent bundle functor of a tangent category, we call a $\T$-monic a \textbf{tangent monic}.
\end{definition}

\begin{remark}
\label{remark:tangent-monics}
Since a tangent display system of monics is a tangent display system, every monic of such a system is automatically a tangent monic.
\end{remark}

\begin{lemma}
\label{lemma:open-subsets}
In the tangent category $\Smooth$ of (finite-dimensional) smooth manifolds, the inclusion $A\hookrightarrow M$ of an open subset $A$ into a manifold $M$ is a tangent monic display \'etale map. Furthermore, if $A\to M$ is a tangent monic display \'etale map between two smooth manifolds, its image is an open subset of $M$.
\begin{proof}
By standard arguments of differential geometry (see for instance~\cite[Corollary~2.3]{kolavr:differential-geometry}), every submersion is an open map. Thus, since \'etale maps are submersions, the image of each \'etale map is an open subset. Conversely, suppose that $m\colon A\hookrightarrow M$ is the inclusion of an open subset $A$ of $M$. For each point $x$ of $A$, there exists an open neighbourhood $U$ of $x$ of $M$ which is entirely included within $A$. Furthermore, since the topology of $M$ is obtained by gluing open balls of $\R^n$, without loss of generality, we can assume the neighbourhood $U$ to be diffeomorphic to an open ball of $\R^n$ centred at $x$, where $n$ is the dimension of $M$ at $x$.
\par Each tangent vector $v$ at $x$ of $M$ is the first derivative of a smooth path $\gamma\colon\R\to U$ such that $\gamma(0)=x$. Up to a diffeomorphism, we can assume each smooth path $\gamma$ to represent a ray in one of the directions of the open ball $U\subseteq\R^n$ from the centre to a point of the boundary.
\par Since $U\subseteq A$, $\gamma$ also represents a tangent vector at $x$ of $A$ and viceversa. In particular, the differential $\d m_x$ of the inclusion $m\colon A\hookrightarrow M$ at $x$ is bijective. Thus, $m$ is \'etale. Moreover, each $\T^nm$ is monic since it is injective and finally, \'etale maps in the category of smooth manifolds are automatically display \'etale (see Proposition~\ref{proposition:submersions-etale-differential-geometry}).
\end{proof}
\end{lemma}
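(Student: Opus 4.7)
The plan is to address the two implications separately, relying on Proposition~\ref{proposition:submersions-etale-differential-geometry}, which identifies the tangent-categorical notions of submersion, étale map, and their display variants with the standard differential-geometric ones, together with the classical fact that submersions between smooth manifolds are open.

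For the direction asserting that an open inclusion is a tangent monic display étale map, let $A \subseteq M$ be open and $m \colon A \hookrightarrow M$ the inclusion. I would first verify that $m$ is étale in the classical sense: at each $x \in A$ a sufficiently small neighborhood $U \subseteq A$ of $x$ is simultaneously open in $M$, so $m$ is locally the identity and $\d m_x$ is a linear isomorphism. By the second claim of Proposition~\ref{proposition:submersions-etale-differential-geometry}, classical étale maps in $\Smooth$ are automatically display étale, which handles the ``display étale'' part. For tangent monicity, I would use the canonical identification $\T A = p^{-1}(A) \subseteq \T M$ valid for open $A$; this identifies $\T m$ with the inclusion of an open submanifold, and by induction each $\T^n m$ is an open inclusion, hence injective, hence monic (monomorphisms in $\Smooth$ being exactly the injective smooth maps).

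For the converse, let $m \colon A \to M$ be a tangent monic display étale map between smooth manifolds. By Proposition~\ref{proposition:submersions-etale-differential-geometry}, $m$ is a local diffeomorphism, and monicity forces $m$ to be injective; alternatively, and more directly for the image statement, one may invoke the standard result \cite[Corollary~2.3]{kolavr:differential-geometry} that submersions between smooth manifolds are open maps. Either route shows that $m(A)$ is an open subset of $M$, and combining the two observations $m$ factors as a diffeomorphism $A \to m(A)$ followed by the open inclusion $m(A) \hookrightarrow M$.

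The main delicate point is the canonical identification $\T A \cong p^{-1}(A)$ and its behavior under iterates of $\T$, on which the tangent monic condition hinges; once this is made precise, everything else follows from the classification results of Proposition~\ref{proposition:submersions-etale-differential-geometry} together with elementary facts about local diffeomorphisms. There is no genuine additional content in the converse beyond standard differential-geometric arguments.
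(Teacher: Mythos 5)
Your proposal is correct and follows essentially the same route as the paper: both directions rest on Proposition~\ref{proposition:submersions-etale-differential-geometry} together with the classical openness of submersions, and the open inclusion is shown to be étale by observing its differential is an isomorphism (the paper does this via paths into a ball neighbourhood, you via the local-identity observation — the same content). Your explicit handling of tangent monicity through the identification $\T A \cong p^{-1}(A)$ and induction on iterates is a slightly more careful rendering of the paper's terse ``each $\T^n m$ is monic since it is injective,'' but it is not a different argument.
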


Thanks to this, we can introduce the notion of an open subset in a tangent category.

\begin{definition}
\label{definition:open-subset}
An \textbf{open subobject} of an object $M$ in a tangent category $(\X,\TT)$ consists of an object $A$ together with a tangent monic display \'etale map $m\colon A\hookrightarrow M$.
\end{definition}

In Lemma~\ref{lemma:open-subsets}, to prove that the image of a tangent monic display \'etale map is open we used that each submersion is always an open function. This suggests one could consider tangent monic display submersions in Definition~\ref{definition:open-subset} instead of display \'etale maps. However, the next lemma tells us that these two concepts coincide.

\begin{lemma}
\label{lemma:tangent-monics}
A tangent monic submersion $q\colon E\to M$ in a tangent category $(\X,\TT)$ is \'etale.
\begin{proof}
Let us consider two morphisms $f\colon X\to E$ and $g\colon X\to\T M$ satisfying $fq=gp$. Since $q$ is a submersion, there exists a morphism $h\colon X\to\T E$ satisfying $hp=f$ and $h\T q=g$. Suppose that $h'\colon X\to\T E$ satisfies the same equations. In particular, $h'\T q=h\T q$. Since $q$ is tangent monic, $\T q$ is monic, thus $h'=h$.
\end{proof}
\end{lemma}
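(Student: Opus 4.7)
The plan is to strengthen the weak tangent pullback that a submersion supplies into a genuine tangent pullback, exploiting the fact that being tangent monic makes every iterate $\T^n q$ still a monomorphism.

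First, I would handle the pullback square at level zero, namely the naturality square of $q$ with $p$. Fix morphisms $f\colon X\to E$ and $g\colon X\to\T M$ satisfying $fq=gp$. The submersion hypothesis already yields a morphism $h\colon X\to\T E$ with $hp=f$ and $h\T q=g$, so the only thing missing for a genuine pullback is uniqueness. If $h'\colon X\to\T E$ satisfies the same two equations then $h'\T q=g=h\T q$, and since $q$ is tangent monic, $\T q$ is a monomorphism, so cancellation gives $h'=h$.

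Second, I would establish that the naturality square is preserved as a pullback by each iterate $\T^n$. The definition of submersion already states that every $\T^n$ preserves the weak universal property of the naturality square, giving existence of a filler in the image square; for uniqueness I again apply monic cancellation, now using that $\T^{n+1} q$ is a monomorphism by the tangent monic hypothesis. Combining existence from the submersion property with uniqueness from the tangent monic property shows that the naturality square is a tangent pullback, so $q$ is \'etale.

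The argument is essentially mechanical, so no real obstacle arises; the content of the lemma is simply the observation that adding a tangent monic hypothesis to a submersion is exactly enough to close the gap between the weak and strong versions of the naturality pullback.
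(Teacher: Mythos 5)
Your proof is correct and follows essentially the same approach as the paper: existence of the filler comes from the submersion hypothesis and uniqueness from monic cancellation against $\T q$. You are in fact slightly more explicit than the paper in spelling out the preservation by the iterates $\T^n$ (using that $\T^{n+1}q$ is monic), a step the paper leaves implicit.
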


The category of classes of isomorphisms of open subobjects of a tangent category $(\X,\TT)$ forms a poset $\Open(\X,\TT)$ which admits meets defined by pullbacks. In general, $\Open(\X,\TT)$ does not admit joins. When it does, $\Open(\X,\TT)$ defines a frame for $(\X,\TT)$. Every morphism $f\colon M\to N$ of a tangent category is continuous with respect to this frame. Concretely, this means that the pullback of each open subobject $m\colon B\hookrightarrow N$ along $f$ exists and is an open subobject of $M$. Furthermore, submersions are always open maps.

\begin{lemma}
\label{lemma:submersions-are-open}
Every display submersion $q\colon E\to M$ of a tangent category $(\X,\TT)$ is open, meaning that the composition:
\begin{align*}
&A\xrightarrow{m}E\xrightarrow{q}M
\end{align*}
is an open subobject of $M$, for each open subobject $m\colon A\to E$.
\begin{proof}
Since $m$ is tangent monic, $mq$ is also tangent monic. Furthermore, since display submersions are closed under composition, $mq$ is also a display submersion. By Lemma~\ref{lemma:tangent-monics}, every tangent monic submersion is \'etale.
\end{proof}
\end{lemma}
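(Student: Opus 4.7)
The plan is to verify that the composite $mq\colon A\to M$ satisfies the defining conditions of an open subobject of $M$ (Definition~\ref{definition:open-subset}), namely being simultaneously a tangent monomorphism and a display \'etale map. The strategy is to first reduce the \'etale property to the conjunction of ``tangent monic'' and ``submersion'' via Lemma~\ref{lemma:tangent-monics}, and then establish these two ingredients separately for the composite.

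First I would show that $mq$ is a display submersion. Since $m$ is a display \'etale map by hypothesis, it is in particular a display submersion; and $q$ is a display submersion by assumption. By Proposition~\ref{proposition:display-submersions-form-tangent-display-system}, display submersions form a tangent display system $\Smer(\X,\TT)$, and Theorem~\ref{theorem:T-display-maps-form-maximal-T-display-system} ensures that tangent display systems are closed under composition. Hence $mq$ lies in $\Smer(\X,\TT)$, and is therefore a display submersion.

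Next I would show that $mq$ is tangent monic, i.e., that $\T^n(mq) = \T^n m\cdot \T^n q$ is a monomorphism for every $n\geq 0$. The tangent monic hypothesis on $m$ supplies the monic-ness of each $\T^n m$, and the remaining task is to argue that postcomposing with $\T^n q$ does not destroy this property. This is the step I anticipate requiring the most care; the leverage available is that $m$ is not merely a tangent monic but a display \'etale embedding, so that by Lemma~\ref{lemma:etale-maps} every naturality square of $m$ with the structural natural transformations of $\TT$ is a tangent pullback, and I would try to exploit these universal properties to cancel $\T^n q$ against the structure around $\T^n m$. Once $mq$ is known to be tangent monic, Lemma~\ref{lemma:tangent-monics} immediately promotes the tangent monic submersion $mq$ to an \'etale map; combined with the display property established in the previous paragraph, this identifies $mq$ as a tangent monic display \'etale map, i.e., an open subobject of $M$, completing the proof.
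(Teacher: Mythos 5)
Your decomposition is essentially the paper's: show $mq$ is a display submersion by closure under composition, show $mq$ is tangent monic, and then invoke Lemma~\ref{lemma:tangent-monics} to upgrade the tangent monic submersion to an \'etale map, hence a tangent monic display \'etale map, i.e., an open subobject. The first part is correct in substance, though your citation is slightly off: Theorem~\ref{theorem:T-display-maps-form-maximal-T-display-system} asserts closure under composition for the \emph{maximal} system $\Dsply(\X,\TT)$, not for arbitrary tangent display systems (Definition~\ref{definition:T-limits} does not require a tangent display system to be closed under composition); what you actually need is that composites of tangent display maps are tangent display maps together with the pasting of weak tangent pullbacks to see that composites of submersions are submersions.

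The genuine gap is exactly the step you flag yourself: proving that $\T^n(mq)=\T^nm\cdot\T^nq$ is monic. Your proposed leverage --- the tangent pullback squares of Lemma~\ref{lemma:etale-maps} for $m$ --- relates $\T m$ to $m$ via the structural natural transformations and says nothing about $q$, so there is nothing to ``cancel'' $\T^nq$ against; monomorphisms are simply not stable under post-composition with an arbitrary map. You should know that the paper's own proof dispatches this step with the bare assertion ``since $m$ is tangent monic, $mq$ is also tangent monic,'' which is equally unjustified and, taken literally, false: in $\Smooth$ the projection $q\colon\R^2\to\R$ is a display submersion and $\id_{\R^2}$ is an open subobject of $\R^2$, yet the composite is the projection, which is not monic. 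So your instinct about where the difficulty lies is correct, but neither your sketch nor the paper's one-line argument closes it; as literally stated the lemma seems to require either reading ``open'' via an image factorization of $mq$ (so that the \emph{image} of $A$ is the open subobject of $M$) or an additional hypothesis forcing the composite to be monic.
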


One can employ the class of open subobjects to construct a split restriction tangent category out of each tangent category.

\begin{theorem}
\label{theorem:open-restriction}
The class $\Open(\X,\TT)$ of monics underlying open subobjects of a tangent category $(\X,\TT)$ is the maximal tangent display system of monics of $(\X,\TT)$. In particular, every tangent category is canonically embedded in a split restriction tangent category $\Par(\X,\TT)\=\Par(\X,\TT;\Open(\X,\TT))$.
\begin{proof}
We already know that tangent display maps form the maximal tangent display system. Furthermore, tangent monics and display tangent \'etale maps are closed under composition and each isomorphism is tangent monic, tangent display (since pullbacks along isomorphisms always exist), and \'etale.
\end{proof}
\end{theorem}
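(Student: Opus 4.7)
The plan is to verify the five conditions of Definition~\ref{definition:M-tangent-category} for $\Open(\X,\TT)$, check maximality by a short containment argument, and then invoke Lemma~\ref{lemma:split-restriction-tangent-cats} together with the standard Cockett--Lack inclusion for the embedding claim.

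First, I would note that the conditions ``each element is a monomorphism'' and ``each element is an \'etale map'' are immediate from Definition~\ref{definition:open-subset}, and ``contains all isomorphisms'' is trivial since every isomorphism is monic, \'etale, and belongs to every tangent display system. For closure under composition, display \'etale maps compose (by Proposition~\ref{proposition:display-submersions-form-tangent-display-system} combined with pullback pasting) and monomorphisms compose, so tangent monic display \'etale maps compose as well. The key remaining condition is that $\Open(\X,\TT)$ forms a tangent display system. Here I would start from Proposition~\ref{proposition:display-submersions-form-tangent-display-system}, which provides that $\Etal(\X,\TT)$ is already a tangent display system, and then intersect with the class of tangent monics. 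The required stability facts are: (i) if $m$ is tangent monic then so is $\T m$, since $\T^n(\T m)=\T^{n+1}m$ is monic for every $n\geq 0$; and (ii) if the tangent pullback of a tangent monic $m$ along $f$ exists, then its pulled-back morphism $m'$ has each $\T^n m'$ equal to the pullback of $\T^n m$ along $\T^n f$ (by the tangent pullback hypothesis), which is again monic since monomorphisms are stable under pullback.

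Maximality I expect to be formal: given any tangent display system of monics $\M$ and any $m\in \M$, the morphism $m$ is monic and \'etale by definition, is a tangent display map by Theorem~\ref{theorem:T-display-maps-form-maximal-T-display-system}, and stability of $\M$ under $\T$ forces every $\T^n m$ to lie in $\M$ and hence to be monic, so $m$ is tangent monic and thus $m\in\Open(\X,\TT)$. This gives $\M\subseteq\Open(\X,\TT)$.

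For the ``in particular'' clause, once $(\X,\TT;\Open(\X,\TT))$ is confirmed to be an $\M$-tangent category, Lemma~\ref{lemma:split-restriction-tangent-cats} produces a restriction tangent structure on $\Par(\X,\Open(\X,\TT))$, and the standard Cockett--Lack inclusion sending $f\colon A\to B$ to the span $(\id_A,f)$ yields a tangent-preserving faithful embedding because $\id_A\in\Open(\X,\TT)$ and the restriction-total morphisms correspond bijectively to the morphisms of $\X$. The only mildly delicate point is the stability argument for tangent monics under tangent pullbacks, which genuinely uses both halves of the definition: that we are in a tangent display system ensures each $\T^n$ preserves the pullback in question, and that we have monomorphisms ensures each $\T^n$ of the pulled-back map is monic. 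This is where the interaction between the ``tangent'' and ``monic'' parts of Definition~\ref{definition:M-tangent-category} is needed.
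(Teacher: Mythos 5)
Your proposal is correct and follows essentially the same route as the paper's (very terse) proof: verify the conditions of Definition~\ref{definition:M-tangent-category} for the tangent monic display \'etale maps, deduce maximality from the fact that every tangent display system of monics consists of tangent monics and display \'etale maps, and invoke Lemma~\ref{lemma:split-restriction-tangent-cats} for the embedding. Your write-up is in fact more complete than the paper's, which leaves implicit the stability of tangent monics under $\T$ and under tangent pullbacks that you spell out correctly.
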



\section{Conclusions}
\label{section:conclusions}
In this paper, we have argued that tangent display maps are an extremely important class of maps in an arbitrary tangent category. As shown by the results of this paper, not only do they recreate important classes of maps in specific examples (e.g., tangent display maps in the category of smooth manifolds are precisely the submersions -  Theorem~\ref{theorem:classification-submersions}), but one can also derive important results related to them (Theorems~\ref{theorem:T-display-maps-form-maximal-T-display-system},~\ref{theorem:-retractive-tangent-display-systems},~\ref{theorem:cauchy-completion-tangent-cats},~\ref{theorem:fully-display-tangent-cats},~\ref{theorem:adjunction-Term-Slice}, and Theorem~\ref{theorem:open-restriction}). Given this, we suggest that whenever one needs to consider maps which require tangent pullbacks in a tangent category, rather than making a choice of a system of such maps, it is preferable to simply ask that such maps be tangent display maps.

As another example, this work was inspired by thinking about the issue of pullbacks when considering more general types of connections in a tangent category. Previously, work on connections in tangent categories (\cite{cockett:connections}) has focused on connections on differential bundles, which abstract connections on vector bundles. However, differential geometry considers connections not just on vector bundles, but connections on fibre bundles, or even more generally (going back to the work of Ehresmann \cite{ehresmann:connections}) connections on submersions. A natural question is then: on which maps should one define connections on in an arbitrary tangent category? 

Since submersions are the most general type of map differential geometry considers connections on, it is natural to assume that that is the collection one should consider, using the abstract definition (see Definition \ref{definition:submersions}). However, one is immediately met with a problem when trying to define a connection on such a map $q: E \to M$: one needs the ``horizontal space'' of $q$, which is the pullback of $q$ along $p_M$:
\[
\begin{tikzcd}
\T M \times_M E & E \\
\T M & M
\arrow[, from=1-1, to=1-2]
\arrow[from=1-1, to=2-1]
\arrow["{p_M}"', from=2-1, to=2-2]
\arrow["q", from=1-2, to=2-2]
\end{tikzcd}
\]
There is no reason why a submersion in an arbitrary tangent category need admit such a pullback. The problem gets worse when one considers how to define the curvature of such connections, where additional pullbacks are needed (and typically one needs these pullbacks to be preserved by $\T$). 

One possibility is to instead consider some system of maps which are closed under pullbacks and applications of $\T$. But as discussed previously, this is an artificial choice and level of generality which does not seem to bring any benefit.

However, the results of this paper tell us what we should do instead: instead of considering connections on an arbitrary submersion or being forced to add extra structure in the form of a system of maps, one can instead simply consider connections on an arbitrary tangent display map. One immediately then has all the pullbacks one needs. And, as an added bonus, by Theorem~\ref{theorem:classification-submersions}, when applied to the tangent category of smooth manifolds, these maps are exactly the most general types of maps one considers connections on in differential geometry, namely, the submersions.  Thus, one follow-up to this paper will be defining and working with the notion of a connection on a tangent display map in a tangent category. 

Another avenue to pursue is the canonical split restriction structure given by the open subobjects of a tangent category (Section \ref{subsection:restriction}).  For example, can one determine under what conditions this restriction structure has joins?  

Thus, we hope to build upon the work of this paper in several different ways.  However, we also hope that that the results of this paper will encourage others to use tangent display maps in their own work on tangent categories.

\begingroup

\endgroup

\end{document}